\newtheorem{Thm}{Theorem}[section]
\newtheorem{Lem}[Thm]{Lemma}
\newtheorem{Fact}[Thm]{Fact}
\newtheorem{Cor}[Thm]{Corollary}
\newtheorem{Prop}[Thm]{Proposition}
\newtheorem{Conj}[Thm]{Conjecture}
\newtheorem{``Conj"}[Thm]{``Conjecture"}
\newtheorem{Predict}[Thm]{Prediction}
\newtheorem{Ques}{Question}
\newtheorem{Fib}{Fibrations}
\theoremstyle{remark}
\newtheorem{Rem}[Thm]{Remark}
\theoremstyle{definition}
\newtheorem{Def}[Thm]{Definition}
\newtheorem{Setup}[Thm]{Setup}
\newtheorem*{ack}{Acknowledgements}
\newcommand{\R}{\mathbb{R}}
\newcommand{\C}{\mathbb{C}}
\newcommand{\ZZ}{\mathbb{Z}}
\newcommand{\X}{\mathcal{X}}
\newcommand{\cL}{\mathcal{L}}
\newcommand{\Aut}{\mathop{\mathrm{Aut}}\nolimits}
\begin{document}

\title[Hybrid SYZ fibration and finite symmetries]
{Special Lagrangian fibrations,
Berkovich retraction, and
crystallographic groups}

\author{Keita Goto, Yuji Odaka}
\date{\today}

\maketitle


\begin{abstract}
    We explicitly construct
    special Lagrangian fibrations on
    finite quotients of maximally
    degenerating abelian varieties, and
    glue these to the Berkovich retraction to form a  ``hybrid" fibration.
    We also study their symmetries  explicitly  which can be regarded as
     crystallographic groups.
    In particular, a conjecture of Kontsevich-Soibelman \cite[Conjecture 3]{KS}
    is solved at an enhanced level for
    finite quotients of abelian varieties in
    any dimension.
\end{abstract}


\section{Introduction}

We discuss the following two kinds of fibrations which appear around the so-called ``large complex structure limits" or ``maximal degenerations"
of K-trivial varieties
(a.k.a. Calabi-Yau varieties, in their most general sense).

\begin{Fib}\label{CSYZ}
Special Lagrangian fibrations from {\it complex} Calabi-Yau manifolds (often called  ``SYZ fibrations")
\end{Fib}
\begin{Fib} \label{NASYZ}
Berkovich type retractions of {\it Berkovich (non-archimedean)} space to the essential
skeleton (often called ``non-archimedean SYZ fibration").
\end{Fib}

Their historic origins are quite different.  The former dates back at least to Harvey-Lawson \cite[III]{HL}  in the context of calibrated  geometry. Later,
the paper by
Strominger-Yau-Zaslow \cite{SYZ},
shed light from the context of the mirror
symmetry and
expected that the fibration structure \ref{CSYZ}
exists for any ``maximal" degenerations. This fairly
nontrivial
prediction is occasionally referred to as (a part of)
``SYZ conjecture".

\begin{Conj}[the SYZ conjecture on special Lagrangian fibrations]\label{metricSYZ}
Take any flat proper family $f \colon \mathcal{X}^*\to \Delta^*=\{t\in \C\mid 0<|t|<1\}$
which is a maximal degeneration i.e.,
with the maximal unipotency index of the monodromy.
Also take any relatively ample line bundle $\mathcal{L}^*$ and
consider the famliy of Ricci-flat K\"ahler metrics $g_{\rm KE}(\mathcal{X}_t)$ on $\pi
^{-1}(t)=\mathcal{X}_t$ for $t\neq 0$,
whose K\"ahler classes are $c_1(\mathcal{L}_t:=\mathcal{L}|_{\mathcal{X}_t})$.

Then for any $|t|\ll 1$, there is a special Lagrangian fibration on $\mathcal{X}_t$
(with respect to a certain meromorphic relative section of $K_{\mathcal{X}^*/\Delta^*}$).
\end{Conj}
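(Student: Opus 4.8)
The plan is to reduce this complex-analytic problem to the non-archimedean geometry of Fibration \ref{NASYZ} and then transport the resulting torus fibration back to the nearby fibers $\mathcal{X}_t$. Regard the punctured family as a variety $X$ over the non-archimedean field $\C((t))$; maximal unipotency of the monodromy ensures that the essential skeleton $\mathrm{Sk}(X) \subset X^{\mathrm{an}}$ has full dimension $n = \dim_{\C}\mathcal{X}_t$ and carries the Berkovich retraction $\rho\colon X^{\mathrm{an}} \to \mathrm{Sk}(X)$. On the open dense regular locus $\mathrm{Sk}(X)^{\circ}$ (the complement of the codimension-$\geq 1$ strata) the combinatorics of a minimal dlt model equips $\mathrm{Sk}(X)^{\circ}$ with an integral-affine structure, and the Boucksom--Jonsson solution of the non-archimedean Monge--Amp\`ere equation attached to $\mathcal{L}$ gives it a Hessian (Monge--Amp\`ere) metric $g_\infty$. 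The pair $(\mathrm{Sk}(X)^{\circ}, g_\infty)$ is the expected Gromov--Hausdorff limit base, and the sought fibration will be produced as a perturbation of the semiflat model over it.

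\textbf{Step 1 (comparison property).} The decisive input is to show that the normalized Ricci-flat potentials $\varphi_t$ solving the complex Monge--Amp\`ere equation converge, after transport to the Boucksom--Jonsson hybrid space $X^{\mathrm{hyb}}$ that glues the $\{\mathcal{X}_t\}_{t\neq 0}$ to $X^{\mathrm{an}}$, to the non-archimedean solution $\varphi_{\mathrm{NA}}$. This is Y.~Li's \emph{comparison property}: it asserts that the $t\to 0$ degeneration of $g_{\mathrm{KE}}(\mathcal{X}_t)$ is governed by $(\mathrm{Sk}(X), g_\infty)$. I would prove it by upgrading the $C^0$-convergence of potentials in the hybrid topology (already available from the Boucksom--Jonsson program) to convergence of the metrics, combining the a priori estimates of that program with a quantitative analysis of $\varphi_t$ along the strata of a semistable/dlt model.

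\textbf{Step 2 (fibration on the regular region).} Granting Step 1, pullback through $\rho$ together with interior higher-order estimates of Gross--Tosatti--Zhang type yield, on $V_t := \rho^{-1}(\mathrm{Sk}(X)^{\circ}) \cap \mathcal{X}_t$ and in rescaled coordinates, uniform $C^{\infty}_{\mathrm{loc}}$ closeness between $g_{\mathrm{KE}}(\mathcal{X}_t)$ and an explicit semiflat Calabi--Yau metric $g_{\mathrm{sf},t}$ over $\mathrm{Sk}(X)^{\circ}$, whose fibers are flat tori that are \emph{special} Lagrangian for the semiflat holomorphic volume form. One then perturbs these model special Lagrangians to honest ones for $(g_{\mathrm{KE}}, \Omega_t)$: McLean's theorem makes the deformation space of each special Lagrangian torus unobstructed and isomorphic to $H^1(T^n;\R)$, so a fibered implicit function theorem --- quantitative in the $C^{\infty}$-distance above --- produces a special Lagrangian $T^n$-fibration $V_t \to \mathrm{Sk}(X)^{\circ}$ for all $|t|\ll 1$, compatible with $\rho$ in the hybrid limit.

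\textbf{Step 3 (crossing the discriminant; the main obstacle).} It remains to extend this fibration across $W_t := \mathcal{X}_t\setminus V_t = \rho^{-1}(\mathrm{Sk}(X)\setminus \mathrm{Sk}(X)^{\circ})\cap\mathcal{X}_t$, a shrinking neighborhood of the codimension-$\geq 1$ strata, where the semiflat model degenerates, the integral-affine structure acquires monodromy, and the fibers are expected to become singular (non-torus). The plan is to install explicit local models of singular special Lagrangian fibrations over each stratum --- built from $T^n$-cone and Gross-type positive/negative vertex models together with Joyce's analysis of special Lagrangian singularities --- to glue them to the regular fibration along the collar $\rho^{-1}(\partial\,\mathrm{Sk}(X)^{\circ})$ by matching the prescribed integral-affine monodromy to the topology of the degenerate fibers, and finally to correct the glued almost-special-Lagrangian fibration to an exact one by a continuity/degree argument. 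This is where I expect essentially all of the difficulty to lie: the semiflat approximation of Step 2 is not uniform up to $W_t$, McLean's unobstructedness fails at singular fibers, and the requisite local models remain to be established in full generality. Constructing these local models and carrying out the gluing analysis across the collar is therefore the main new work the program demands, and the point at which a proof of Conjecture \ref{metricSYZ} in its stated generality must ultimately be completed; Steps 1--2 by themselves already deliver the special Lagrangian fibration over a region of asymptotically full measure.
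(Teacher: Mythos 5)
Your proposal is a program, not a proof, and you acknowledge this yourself: Step 3 is left as ``the main new work the program demands,'' and even Step 1, the comparison property, is a major open problem in the generality of Conjecture \ref{metricSYZ} (Yang Li's work establishes it only under additional hypotheses; unconditionally it is known essentially only for totally degenerate abelian varieties, via Liu's non-archimedean Calabi--Yau theorem). What your Steps 1--2 deliver is exactly the weaker statement that the paper records in a remark after Theorem \ref{SLAG.ppav}: combining \cite[Theorem 1.3]{YangLi} with \cite{Liu} yields special Lagrangian fibrations only on a large open subset of $\mathcal{X}_t$, not on all of $\mathcal{X}_t$. So, as an attempt on the statement as written, the proposal has a genuine, self-acknowledged gap, and it is precisely the gap that keeps the general SYZ conjecture open.

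You should also be aware that the paper never attacks the general conjecture; it proves Conjecture \ref{metricSYZ} only for maximal degenerations of polarized abelian varieties in any dimension, their K-trivial finite quotients, and K-trivial surfaces, by methods disjoint from yours. For surfaces (Theorem \ref{SLAG.OO}) the fibration is obtained by hyperK\"ahler rotation, which converts the special Lagrangian fibration into a holomorphic elliptic fibration whose existence follows from lattice-theoretic and K\"ahler-cone arguments. For abelian varieties of any dimension (Theorem \ref{SLAG.ppav}), the key input is the explicit Mumford--Faltings--Chai type uniformization $\mathcal{X}_t \simeq (\C^*)^g/\langle p_{i,j}(t)\rangle$ of Lemma \ref{FC}: in logarithmic coordinates $z_i$ the flat K\"ahler form and the chosen volume form $\Omega_t = \frac{-2\pi}{\log|t|}\,dz_1\wedge\cdots\wedge dz_g$ are completely explicit, and the map $(z_i)\mapsto(\mathrm{Im}\,z_i)$ to the real torus $\R^g/(\beta_{i,j}(t))\ZZ^g$ is verified to be special Lagrangian by a direct computation (both $\omega_t$ and $\mathrm{Im}\,\Omega_t$ restrict to zero on the fibers). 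In particular the fibration is a smooth $T^g$-bundle: for abelian varieties there is no discriminant at all, so your Step 3 is vacuous there, and no PDE estimates, semiflat approximation, or McLean perturbation are needed. This explicitness is what lets the paper obtain a global fibration --- and subsequently glue it to the Berkovich retraction into a hybrid fibration --- where the general program you outline currently cannot close.
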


On the other hand,
the latter fibration (retraction)  first appeared in the  basic general theory of  non-archimedean
geometry \cite{Ber99}.
Note that in both situations,
affine manifolds with singularities
underly the fibrations. Such
a heuristic similarity of these two fibrations is first pointed out in
Kontsevich-Soibelman \cite{KS}, in which
both fibrations are regarded roughly as
``tropicalization".
The insight in particular lead to the following prediction (\cite[Conjecture 3]{KS}),
which was not originally quite precise.
\begin{Predict}[{\cite[Conjecture 3]{KS}}]\label{KS3}
For each maximal degeneration of polarized K-trivial varieties, as Conjecture \ref{metricSYZ},
the following two affine structures coincide:
\begin{itemize}
\item one affine structure obtained by the ``complex picture" through
the special Lagrangian fibrations or the collapse of
the Calabi-Yau metrics and
\item another affine structure underlying the non-archimedean SYZ fibration (the ``non-archimedean picture").
\end{itemize}
\end{Predict}

\cite[\S 5]{Goto} by the first author make the above prediction \ref{KS3} ($=$\cite[Conjecture 3]{KS}) rigorous and
proves it
for abelian varieties and  Kummer surfaces in an explicit manner, with a particular emphasis on the
non-archimedean SYZ fibration side.

We quickly summarize our main statements as follows,
but we leave
the details of the statements to later sections.
\begin{Thm}\label{summary}
For any maximally degenerating abelian varieties,
\begin{enumerate}
\item (Theorems \ref{SLAG.OO}, \ref{SLAG.ppav}) the SYZ conjecture \ref{metricSYZ} holds,
\item (Theorem \ref{Hybrid SYZ}) we can glue the obtained SYZ fibrations with the non-archimedean SYZ fibration
(to what we call the ``hybrid SYZ fibration").
\end{enumerate}
\end{Thm}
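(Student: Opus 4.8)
The plan is to reduce the whole statement to the explicit uniformization of a maximally degenerating family of polarized abelian varieties, exploiting the decisive simplification that for an abelian variety the Ricci-flat K\"ahler metric in a fixed polarization class is in fact \emph{flat}. After a finite base change I would present each fiber multiplicatively as $\mathcal{X}_t=(\C^*)^g/q_t^{\ZZ^g}$ (multiplicative uniformization), the maximal unipotency of the monodromy being encoded in the growth $\tau(t)=\tfrac{\log t}{2\pi i}B+O(1)$ of the period matrix for a positive-definite symmetric $B$. In these coordinates the flat K\"ahler form $\omega_t$ representing $c_1(\mathcal{L}_t)$ and the translation-invariant holomorphic volume form $\Omega_t=dz_1\wedge\cdots\wedge dz_g$ are written down explicitly, and the meromorphic relative section of $K_{\mathcal{X}^*/\Delta^*}$ required by Conjecture \ref{metricSYZ} is the evident one.

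For part (i) I would build the special Lagrangian fibration by hand. A Lagrangian splitting $\Lambda_t=\Lambda_t'\oplus\Lambda_t''$ adapted to the polarization exhibits $\mathcal{X}_t$ as a real torus bundle over $\R^g/\Lambda_t'$, and I take the fibration $\mu_t$ to be the corresponding projection. That the fibers are Lagrangian is immediate, since the translation-invariant $\omega_t$ vanishes on the chosen real subtorus; the special condition $\mathrm{Im}(e^{-i\theta}\Omega_t)|_{\mathrm{fiber}}=0$ then reduces, for a suitable constant phase $\theta$, to a linear-algebra identity for $\Omega_t$. The two referenced results \ref{SLAG.OO} and \ref{SLAG.ppav} carry this out in two normalizations (a general polarized one and a principally polarized one, the latter permitting a cleaner splitting), and in each the integral affine structure induced on the base $\R^g/\Lambda_t'$ is read off directly from $B$ and the $O(1)$ terms.

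For part (ii) I would set up the Berkovich hybrid space $\mathcal{X}^{\mathrm{hyb}}$ following Boucksom--Jonsson, whose fibers are $\mathcal{X}_t$ for $t\neq 0$ together with the analytification $\mathcal{X}_0^{\mathrm{an}}$ over the non-archimedean field at $t=0$, and a matching hybrid base interpolating the tori $\R^g/\Lambda_t'$ with the essential skeleton $\mathrm{Sk}$. Gluing then comes down to two points. First, the base affine structures must agree, which is precisely the rigorous form of \cite[Conjecture 3]{KS}: since the complex and non-archimedean affine structures are each computed from the same matrix $B$, they coincide. Second, the maps $\mu_t$ must converge, in the hybrid topology, to the non-archimedean retraction; this I would verify after rescaling the base by $(-\log|t|)^{-1}$, so that the growing real tori $\R^g/\Lambda_t'$ limit onto the tropical torus underlying $\mathrm{Sk}$, and the rescaled archimedean tropicalization of the multiplicative coordinates matches the non-archimedean valuation map.

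The main obstacle I anticipate is exactly this continuity across the central fiber: one must show that the normalized complex special Lagrangian maps extend continuously over $\mathcal{X}_0^{\mathrm{an}}$ to the Berkovich retraction, which requires comparing the rescaled archimedean tropicalization with the non-archimedean valuation and controlling the $O(1)$ corrections to the period matrix uniformly as $t\to 0$. Everything else is, thanks to the flatness of the metrics, essentially explicit linear algebra on lattices.
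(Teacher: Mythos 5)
Your proposal follows essentially the same route as the paper: the explicit multiplicative uniformization $\mathcal{X}_t=(\C^*)^g/\langle p_{i,j}(t)\rangle$ (the paper's Lemma \ref{FC}), the special Lagrangian fibration given by projection to the imaginary parts of the additive coordinates with the flat metric and a carefully normalized invariant volume form (Theorem \ref{SLAG.ppav}), and the gluing over the Boucksom--Jonsson hybrid space by matching the $(-\log|t|)^{-1}$-rescaled archimedean tropicalization with the non-archimedean valuation map on Morgan--Shalen type models (Theorem \ref{Hybrid SYZ}). The only point not reflected in your sketch is that the paper additionally treats the surface case by a separate hyperK\"ahler rotation argument following \cite{OO}, but for the statement as given your linear-algebra construction suffices.
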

\begin{Cor}[Corollary 3.3]
For the above setup as Theorem \ref{summary},
Prediction \ref{KS3} holds.
\end{Cor}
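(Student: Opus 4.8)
The plan is to deduce Prediction \ref{KS3} directly from the hybrid fibration of Theorem \ref{Hybrid SYZ}, by making each of the two affine structures explicit on a common base and checking that they agree, \emph{including} their integral structures. Writing the degenerating family as $\mathcal{X}_t = \mathbb{C}^n/(\mathbb{Z}^n + \tau_t \mathbb{Z}^n)$ with period matrix $\tau_t$, both the special Lagrangian fibration of Theorems \ref{SLAG.OO} and \ref{SLAG.ppav} and the Berkovich retraction have the same real torus $B \cong \mathbb{R}^n/\mathbb{Z}^n$ as base; the gluing of Theorem \ref{Hybrid SYZ} provides a canonical identification of these two bases, so it suffices to compare the two integral affine structures transported onto this single $B$.

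Next I would read off each structure. On the complex side, the special Lagrangian fibers are real subtori, and the action coordinates are the periods $\int_{\gamma_i}\mathrm{Im}(\Omega_t)$ of the holomorphic volume form against a basis of the lattice $H_1(\text{fiber},\mathbb{Z})$; after the canonical rescaling that makes the collapsing Ricci-flat metrics converge, the leading asymptotics of $\mathrm{Im}(\tau_t)$ as $t \to 0$ determine a limiting integral affine structure on $B$ whose lattice is $H_1(\text{fiber},\mathbb{Z})$. On the non-archimedean side, the essential skeleton of $\mathcal{X}^{\mathrm{an}}$ is the tropicalization of the abelian variety, and its integral affine structure is the one cut out by monomial valuations; this is exactly the structure computed explicitly in \cite[\S 5]{Goto}.

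The key step is the comparison. I would show that, under the identification furnished by the hybrid fibration, the datum governing the rescaled complex action coordinates, namely the leading term of $\mathrm{Im}(\tau_t)/(-\log|t|)$, coincides with the valuation datum governing the tropical coordinates on the skeleton. For abelian varieties this is precisely the monodromy/weight datum of the degeneration, which on the non-archimedean side is the same datum read off from the valuations; hence the two sets of transition functions, and the two lattices, agree on overlaps, which is exactly Prediction \ref{KS3}.

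I expect the main obstacle to be matching the \emph{integral} structures rather than merely the underlying real affine structures: one must identify the lattice $H_1(\text{fiber},\mathbb{Z})$ underlying the complex action coordinates with the $\mathbb{Z}$-structure of monomial valuations underlying the skeleton, and verify that no spurious rescaling intervenes in the $t \to 0$ limit. This is where the explicitness of the construction is indispensable, since the hybrid fibration pins down the identification of the bases and forces the two integral affine structures to agree pointwise, not merely up to an affine automorphism.
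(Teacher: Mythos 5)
Your proposal follows essentially the same route as the paper: it reduces Prediction \ref{KS3} to the hybrid fibration of Theorem \ref{Hybrid SYZ}, identifies the two bases via the gluing, and matches the rescaled complex affine structure $\nabla_B(t)$ (integral points $\frac{-\log|t|}{2\pi}\ZZ^g$ in $\R^g/\mathrm{Im}\,\Omega(t)\ZZ^g$) with the valuation-theoretic structure on the skeleton through the asymptotics $\lim_{t\to 0}\frac{-2\pi}{\log|t|}\mathrm{Im}\,\Omega(t)=(\mathrm{val}_t\,q_{i,j})_{i,j}$, which is exactly Lemma \ref{omega vs B} combined with Theorem \ref{SLAG.ppav} and \cite[\S 5]{Goto}. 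The comparison step you flag as the key obstacle is precisely the explicit computation the paper carries out, so your plan is correct and coincides with the paper's argument.
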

\noindent
We note that
all the fibrations are made explicit in our discussions.
Moreover,
we generalize the above statements to
finite K-trivial quotients (of maximally degenerating
polarized abelian varieties)
under mild assumptions.

\vspace{3mm}
Here are more details of our organization of this paper.
In the next section, we start with rigorous
proofs of the SYZ conjecture \ref{metricSYZ}
for K-trivial surfaces, abelian varieties
of any dimension and their finite quotients (section \ref{SLAG.sec}).
Recall that already in \cite{OO}, Conjecture \ref{metricSYZ} was proven for the case of K3 surfaces ({\it op.cit} Chapter 4)
 and higher dimensional irreducible holomorphic symplectic varieties of
K3${}^{[n]}$-type and generalized Kummer varieties  ({\it op.cit} Chapter 8).

In section \ref{hyb.sec}, we show how to ``merge"  these two fibrations \eqref{CSYZ} and \eqref{NASYZ} above with different origins, to construct what we call a
{\it hybrid SYZ fibration}.
This provides some enhanced
answer to
\cite[Conjecture 3]{KS},
not only at the level of
integral affine structures.
Our construction is inspired
by
the recent technology of hybrid norms originated in \cite{Ber.Manin} and
re-explored in \cite{BJ}.
We prove
\cite[Conjecture 3]{KS} for K-trivial
finite quotients
of abelian varieties in arbitrary dimension,
generalizing \cite[\S 5]{Goto}.

In section \S \ref{sec.sym}, to explicitly deal with finite quotients of abelian varieties,
we explore the possible symmetry of
maximally degenerating abelian varieties
and its inducing symmetry on the
Gromov-Hausdorff limit,
which fits well to the previous sections.

Lastly, we added an appendix which show that
of the (complex)
Calabi-Yau metrics limit to the non-archimedean Calabi-Yau metric in the sense of
e.g., \cite{BFJ}, in a certain sense. This answers a question of Yang Li.

\vspace{2mm}

Let us collect some notations we use
throughout.

\subsection*{Notation}

\begin{itemize}
    \item $\Delta$ denotes the disk $\{t\in \C\mid |t|<1\}$,
    \item $\Delta^*$ denotes the punctured disk $\{t\in \C\mid 0<|t|<1 \}$,
    \item $\C((t))^{\rm mero}$
denotes the field of the germs of
meromorphic functions with possible pole only at the origin,
    \item The $t$-adic discrete valuation of
    $\C((t))^{\rm mero}$ is denoted as
    ${\rm val}_t \colon (\C((t))^{\rm mero}\setminus \{0\})\to \ZZ$
    \item $(\mathcal{X},\mathcal{L})\to
    \Delta$ (resp.,
    $(\mathcal{X}^*,\mathcal{L}^*)\to
    \Delta^*$)
    denotes a proper holomorphic map from a complex analytic space $\mathcal{X}$
    (resp., $\mathcal{X}^*$)
    together with an ample line bundle $\mathcal{L}$
    (resp., $\mathcal{L}^*$). We also denote as
    $(\mathcal{X},\mathcal{L})/
    \Delta$ (resp.,
    $(\mathcal{X}^*,\mathcal{L}^*)/
    \Delta^*$)
    \item Each fiber of $\mathcal{X}$ over $\Delta\ni t$
    is denoted as $\mathcal{X}_t$. The restriction of $\mathcal{L}$
    to $\mathcal{X}_t$ is denoted as $\mathcal{L}_t$.
    \item
    For the above pair $(\X^*,\cL^*)$, we  associate the smooth projective variety $X$ over $\C((t))^{\rm mero}$ with ample line bundle $L$.
    \item When $\mathcal{X}_t$s are abelian varieties,
    the type of polarization $\mathcal{L}_t$ is
    $(e_1,\cdots,e_g)\in \mathbb{Z}_{>0}^g$
    such that $e_i\mid e_{i+1}$ for any $(0<)i(<g)$.
    We associate a $g\times g$ diagonal matrix
    $E:={\rm diag}(e_1,\cdots,e_g)$.
    \item An {\it integral affine structure}
    (resp., {\it tropical affine structure})
    on a manifold of dimension $n$ means the maximal atlas of local coordinates whose transition functions are in ${\rm GL}(n,\mathbb{Z})\ltimes \mathbb{Z}^n$
    (resp., ${\rm GL}(n,\mathbb{Z})\ltimes \mathbb{R}^n$). This convention follows
    that of e.g. \cite{Gross}.
    \item After \cite{KS, Gross},
    {\it tropical (resp., integral) affine manifold $B$ with singularities} mean a manifold $B$
    and its
    closed subset $\Gamma$ which is locally finite union of submanifolds of codimension at least $2$, together with a
    tropical affine structure (resp., integral affine structure) on the open dense subset $B\setminus \Gamma$ of $B$.
    \end{itemize}

\begin{ack}

This work is partially supported by
supported by JSPS KAKENHI JP20J23401 for K.G. and
KAKENHI 16H06335 (Grant-in-Aid for Scientific Research (S)),
20H00112 (Grant-in-Aid for Scientific Research (A)),
21H00973 (Grant-in-Aid for Scientific Research (B))
and
KAKENHI 18K13389 (Grant-in-Aid for Early-Career Scientists) for Y.O.
\end{ack}


\section{Special Lagrangian fibration}\label{SLAG.sec}

\subsection{K-trivial surfaces case, after \cite{OO}}

This section explains one method
for partially proving Conjecture \ref{metricSYZ} by using
hyperK\"ahler rotation,
following \cite[\S 4]{OO}. The point is that,  although {\it loc.cit} focused on the case of K3 surfaces, the same method applies to the case of abelian surfaces.
In the next (sub)section,
we generalize to higher dimensional abelian varieties
by a more explicit different method.

\begin{Thm}\label{SLAG.OO}
Take an arbitrary maximally degenerating
family of polarized abelian surfaces (resp., polarized K3 surfaces possibly with ADE singularities)
$(\X|_{\Delta^*},\mathcal{L}|_{\Delta^*})$
over $\Delta^*$ with a fiber-preserving symplectic action of finite group
$H$ on $\X|_{\Delta^*}$ together with linearization on
$\mathcal{L}|_{\Delta^*}$ (e.g., $H$ can be even trivial or simple $\{\pm 1\}$-multiplication in
the case of abelian varieties).

We denote the quotient by $H$ as
$(\X'|_{\Delta^*},\mathcal{L}'|_{\Delta^*})\to \Delta^*$.
Then, the following hold:
\begin{enumerate}

\item \label{i} For any $t\in \Delta^*$ with $|t|\ll 1$,
there is a special Lagrangian fibration $f_t\colon \X_t \to \mathcal{B}_t$
with respect to the K\"ahler form $\omega_t$
of the flat K\"ahler metric $g_{\rm KE}(\X_t)$
with $[\omega_t]=c_1(\mathcal{L}_t)$
and the imaginary part ${\rm Im}(\Omega_t)$
of a certain holomorphic volume form
$(0\neq ) \Omega_t\in H^0(\X_t,\omega_{\X_t})$.
Here, $\mathcal{B}_t$ is a $2$-torus (resp., $S^2$)
and so are all fibers of $f_t$.
Note that $\omega_t$ and ${\rm Im}(\Omega_t)$ again induce tropical
affine structures on $\mathcal{B}_t$
(cf., e.g., \cite[Definition 1.1, \S 1]{Gross})
as $\nabla_{A}(t)$  and $\nabla_{B}(t)$ respectively,
as well as its McLean metric $g_t$. Below, we continue to assume $|t|\ll 1$.
In the next section \S \ref{hyb.sec},
we discuss how these $f_t$ glue to a family.

\item \label{ii}
Consider the obtained base associated with a
tropical affine structure
and a flat metric
$(\mathcal{B}_t,\nabla_A(t),\nabla_B(t),g_t)$ for $t\neq 0$. They converge to another
a $2$-torus (resp., $S^2$)
with the same additional structures
$(\mathcal{B}_0,\nabla_A(0),\nabla_B(0),g_0)$ in the natural sense, when $t\to 0$.

In this terminology, the Gromov-Hausdorff limit of $(\X_t,g_{\rm KE}(\X_t)/{\rm diam}(g_{\rm KE}(\X_t))^2)$ for $t\to 0$
coincides with $(\mathcal{B}_0,g_0)$. Here, ${\rm diam}(g_{\rm KE}(\X_t))$
refers to the diameter which is
used to rescale the metric to
that of diameter $1$ (as in
\cite{KS}).

\item \label{ab.iii}
The $H$-action on $\X_t$ preserves the fibers of $f_t$.
Thus, there is a natural induced action of $H$ on $\mathcal{B}_0$, which preserves the
three structures $\nabla_A(0),\nabla_B(0)$ and $g_0$.
The natural quotient of $f_t$ by $H$ denoted as
$f'_t\colon \X'_t\to \mathcal{B}'_t$ is again a special Lagrangian fibration
with respect to the descents of $\omega_t$ the holomorphic volume form
and $(0\neq ) \Omega_t\in H^0(\X_t,\omega_{\X_t})$ we chose above.

\item \label{ab.iv}
If $(\X|_{\Delta^*},\mathcal{L}|_{\Delta^*})$ is a family of
principally polarized abelian surfaces and $H$ is trivial, the tropical affine
structure $\nabla_A(0)$ on $\mathcal{B}_0$ is integral
(\cite[1.1]{Gross}) and its
integral points
consist of only $1$ point.
The corresponding Gram matrix of $g_0$ is the same $(cB(e_i,e_j))$ as appeared in \cite{Goto}.
Also the transition function of the integral basis of $\nabla_A(0)$ to that of $\nabla_B(0)$
is given by the same matrix $(cB(e_i,e_j))$.

\end{enumerate}
\end{Thm}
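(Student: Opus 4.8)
The plan is to run the hyperKähler-rotation argument of \cite[\S 4]{OO} and verify that each step is insensitive to whether the fiber is a K3 or an abelian surface, then to supply the $H$-equivariance of (iii) and the flat computation of (iv). For (i), I equip each smooth $\X_t$ with the hyperKähler structure of $g_{\rm KE}(\X_t)$, giving parallel forms $\omega_I=\omega_t,\omega_J,\omega_K$ with ${\rm Im}\,\Omega_t=\omega_K$ after normalizing $\Omega_t$. The decisive elementary fact is that a surface is special Lagrangian for $(\omega_t,{\rm Im}\,\Omega_t)$ exactly when it is holomorphic for the rotated complex structure $J$, so that a special Lagrangian fibration on $(\X_t,I)$ is the very same datum as a holomorphic elliptic fibration on $(\X_t,J)$. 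I then use the period analysis of maximal degenerations — Torelli for K3, the Siegel period matrix for abelian surfaces — to produce such a fibration for $|t|\ll1$: over $\mathbb{P}^1\cong S^2$ with finitely many singular fibers in the K3 case, and over a $2$-torus as a smooth fiber bundle coming from an elliptic subgroup in the abelian case, the fiber direction being the monodromy-invariant isotropic subspace supplied by maximal unipotency. Transporting the fibration back through the rotation gives $f_t\colon\X_t\to\mathcal{B}_t$, and $\nabla_A(t),\nabla_B(t),g_t$ are read off from $\omega_t$ and ${\rm Im}\,\Omega_t$ by Hitchin's construction on the base of a special Lagrangian fibration; the singular locus $\Gamma$ is empty in the abelian case and is the image of the singular fibers for K3.

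For (ii) I analyze $t\to0$ for $(\mathcal{B}_t,\nabla_A(t),\nabla_B(t),g_t)$, which for abelian surfaces is linear algebra in the degenerating period matrix rather than an analytic collapsing estimate because each $g_{\rm KE}(\X_t)$ is flat; rescaling by ${\rm diam}(g_{\rm KE}(\X_t))^2$ keeps the base limit finite and identifies the Gromov--Hausdorff limit with $(\mathcal{B}_0,g_0)$. For (iii), as $H$ acts symplectically and fixes $\Omega_t$ via the chosen linearization, it preserves both forms cutting out the special Lagrangian condition and hence sends fibers to fibers; since $f_t$ is intrinsic, descending from the canonical elliptic fibration on $(\X_t,J)$, the $H$-action descends to $\mathcal{B}_t$ and to $\mathcal{B}_0$, preserving $\nabla_A(0),\nabla_B(0),g_0$, and $f'_t\colon\X'_t\to\mathcal{B}'_t$ is special Lagrangian for the descended forms by naturality.

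Finally (iv) is an explicit flat computation with $H$ trivial and principal polarization. Writing the family as $\C^2/(\ZZ^2+\tau(t)\ZZ^2)$ with $\tau(t)=\tfrac{\log t}{2\pi i}B+O(1)$ for a positive-definite symmetric $B$, I exhibit the rational special Lagrangian plane explicitly, verify that the $\nabla_A(0)$-transition functions lie in $\GL(2,\ZZ)\ltimes\ZZ^2$ so the structure is integral with a single integral point, and compute that both the Gram matrix of $g_0$ and the $\nabla_A(0)$-to-$\nabla_B(0)$ transition matrix equal $(cB(e_i,e_j))$ after the diameter rescaling, reproducing \cite{Goto}. The main obstacle is precisely this matching: carrying the normalization constant $c$ and the two distinct affine structures through the hyperKähler rotation and the collapse accurately enough that the complex-side data agree on the nose with the non-archimedean matrix of \cite{Goto}. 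The other delicate ingredient, imported from \cite{OO}, is the rationality in (i) that makes the special Lagrangian plane integrate to an honest torus fibration, which is exactly where maximal unipotency of the monodromy is genuinely used.
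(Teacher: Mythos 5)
Your proposal is correct and follows essentially the same route as the paper: hyperK\"ahler rotation converting the special Lagrangian condition into holomorphicity for the rotated structure, production of the elliptic fibration from the monodromy-invariant primitive isotropic class (for abelian surfaces via the quotient by an elliptic subgroup, which is exactly the paper's replacement for Fact 4.14 of \cite{OO}), preservation of ${\rm Im}(\Omega_t)$ under the symplectic $H$-action for (iii), and the explicit period-matrix computation for (iv). The one point the paper stresses that you compress into ``after normalizing $\Omega_t$'' is that the choice of $\Omega_t$ must follow the Siegel-reduction normalization of \cite[\S 4, (4.11)]{OO}, but you correctly flag this constant-tracking as the delicate step.
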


\begin{proof}
Our assertions above \eqref{i} and \eqref{ii} for K3 surfaces case are
proven in \cite[Chapter 4 (and 5,6 partially)]{OO}.

The proof of \eqref{i}, \eqref{ii} for the case for polarized abelian surfaces
and essentially easier and here we follow
the method of {\it loc.cit}. (In the next subsection, we give another proof.)
Hence, we below only sketch (review) the proof as a review and explain the differences with the original K3 surfaces case
in \cite[\S 4]{OO}.
More precisely speaking, we use the arguments of the proofs of
\cite[Theorems 4.11, 4.20]{OO} and other claims
on which they depend.

The (almost verbatim) change of basic setup is as follows.
For an abelian surface $X\simeq \mathbb{C}^2/\Lambda$ with a lattice $\Lambda(\simeq \mathbb{Z}^4)$,
set $\Lambda_{CT}:=\bigwedge^2_{\mathbb{Z}} \Lambda$.
(Here, the subscript CT stands for
complex torus.)
By the orientation on $\mathcal{X}_t$
induced by the complex structure,
we identify $\bigwedge^4_{\mathbb{Z}} \Lambda\simeq \mathbb{Z}$ which induces a lattice structure on
$\Lambda_{\rm CT}$ as isomorphic to $\textrm{I\hspace{-1pt}I}_{3,3}=U^{\oplus 3}$. Take a marking $\alpha$
of $H^2(\X_t,\mathbb{Z})$ for some $t$, and put $\lambda:=\alpha(c_1(\mathcal{L}_t))$,
and replace $\Lambda_{2d}$ of \cite[Chapter 4]{OO} by
$\Lambda_{pas}:=\lambda^{\perp}\subset
\Lambda_{CT}$ which has signature $(2,3)$.
The period domain for $(\X_t,\mathcal{L}_t)$ with weight $2$ is
$$\Omega({\Lambda_{pas}}):=\{[w]\in \mathbb{P}(\Lambda_{pas}\otimes \mathbb{C})
\mid (w,w)=0, (w,\bar{w})>0\},$$
which replaces $\Omega({\Lambda_{\rm K3}})$ of \cite[\S 4.1]{OO}.
By the accidental isomorphism ${\rm Sp}(4,\mathbb{R})/\{\pm 1\}\simeq
SO_0 (2,3)$, we can also identify the connected component of
$\Omega({\Lambda_{pas}})$ as the Siegel upper half space $\mathfrak{S}_2$
of degree $2$. Similarly, we also define and use the union of K\"ahler cones
$K\Omega$ (resp., $K\Omega^0$, $K\Omega^{e\ge 0}$)
as \cite[p.45-46]{OO}. Since the K\"ahler cones of
complex tori are just connected components of positive cones,
the definition of {\it loc.cit} works verbatim if we put $\Delta(\Lambda_{CT}):=\emptyset$, where $\Delta(\Lambda_{CT})$ is the set of simple roots.

Now we are ready to prove the above theorem.
The desired special Lagrangian fibration \eqref{i}
for polarized K3 surfaces family $(\mathcal{X}|_{\Delta^*}=\cup_t
\mathcal{X}_t,
\mathcal{L}|_{\mathcal{X}^*})$ case is obtained at
the bottom of \cite[p.47]{OO} (if we see $\pi_i$ as a map from $X_i$) during the proof of
\cite[4.20]{OO}.
Note that
the choice of a generator $\Omega_t\in H^0(\Omega_{X_t})$
for each $t$ is a priori up to constant multiplication but we
use the careful choice after
\cite[\S 4, especially (4.11)]{OO}.

More precisely, the proof (of \eqref{i})
uses ({\it loc.cit} Claim 6.12 which in turn follows from)
Fact 4.14 and Claim 4.18 of {\it loc.cit}. It applies a hyperK\"ahler rotation,
which also works for flat complex tori
because their holonomy groups are even trivial.

Proof of Claim 4.18 becomes much simpler for $2$-dimensional
complex torus case because we do not need to deal with the roots as ``$\Delta(X)^+$".

The only nontrivial difference exists for Fact 4.14 of \cite{OO}. Indeed, it does
{\it not} literally holds for
complex tori just because the morphism from complex $2$-tori to elliptic curves are not obtained as
pencils.
\begin{Fact}\label{ab.fib}
Let $X$ be a complex $2$-torus and let $e\in H^2(X,\mathbb{Z})$ a (primitive) isotropic.
If $e$ belongs to the closure of its K\"ahler cone, there exists a holomorphic fibration
to an elliptic curve $B$ as $X\twoheadrightarrow B$ whose
fiber class is $e$.
\end{Fact}
\begin{proof}[proof of Fact \ref{ab.fib}]
We take a line bundle $L$ with $c_1(L)=e$.
From \cite[\S 3.3]{BL}, $L$ is effective and represented by an elliptic curve $E$ by the
isotropicity and primitivity condition. If we replace $E$ by a translation of $E$ which
contains the origin of $X$, then $X\twoheadrightarrow B:=X/E$ is the desired morphism.
\end{proof}
Note that the fibers are only homologically identified as $e$, rather than
linear equivalence classes (as in \cite[4.14]{OO} for K3 surfaces).

The proof of \eqref{ii} is similar to the construction of
$X\to B$ of \cite[bottom of p.34-p.35]{OO}. The only difference again is that
we use above Fact \ref{ab.fib} instead of Fact 4.14 of {\it loc.cit}.
Then the proof is reduced to that of 4.22 of {\it loc.cit}, hence that of Chapter 5.
The arguments there work verbatim and is even greatly simplifiable. Indeed,
the main analytic difficulties in K3 surfaces case come from the presence of
(varying) ADE singularities, which destroys the simplest uniform $C^2$-estimates.
(In our case,
it is even possible to confirm 4.22 of
{\it loc.cit} explicitly. )

From here, we give the proofs of \eqref{ab.iii}, \eqref{ab.iv}.
To show \eqref{ab.iii}, we proceed to analyze the effect of the action of $H$.
Since the $H$-action on $\X_t$ is assumed to be symplectic i.e.,
the induced action on $H^0(\X_t, \Omega^2_{\X_t})$ is trivial, in particular
${\rm Im}(\Omega_{t})$
is preserved by the action of $H$. Therefore, the complex structure of the
hyperK\"ahler rotation of $\X_t$ as constructed in \cite[p.47]{OO}
is also preserved by the action. The action also preserves the
isotropic cohomology class $e$. Hence, from the construction of $f_t$,
the action preserves its fibers and the claims of \eqref{ab.iii} are proved.

For the proof of \eqref{ab.iv}, note that under appropriate symplectic basis
$v_1, w_1, v_2, w_2$ of $H^1(\X_t,\mathbb{Z})$ for the principal polarization $\langle -,-\rangle$ i.e.,
$\langle v_i, w_j\rangle=\delta_{i,j}$ and $\langle v_i, v_j\rangle =\langle w_i,w_j\rangle=0$,
$e$ is written as $v_1\wedge w_1$, $H^2(\mathcal{B}_t,\mathbb{Z})$ is written as $\mathbb{Z}(v_2\wedge
w_2)$,
$\lambda=v_1\wedge w_1+v_2\wedge w_2$. Then the direct computation shows
that the integral
affine structure $\nabla_A(0)$ on $\mathcal{B}_0\simeq \mathbb{R}^2/\mathbb{Z}^2$ is the
desired one. The last claim then follows from the definition of Legendre transform (cf., \cite[\S 3]{Hit}, \cite[\S 1]{Gross}). \end{proof}

\begin{Rem}
As we noted in the proof, see
\cite[\S 4, especially (4.11)]{OO} for the actual precise choice of
$\Omega_t\in H^0(\Omega_{\mathcal{X}_t})$ on which the proof depends. \end{Rem}

\begin{Rem}[Effect of monodromy on
the hyperKahler rotation]
Both the constructions of  $f_t\colon \X_t \to \mathcal{B}_t$ in \cite[\S4]{OO} and above theorem \ref{SLAG.OO},
are as holomorphic Lagrangian fibrations of the hyperK\"ahler rotation $\X_t^{\vee}$. Let us consider the
effect of monodromy $T\in {\rm GL}(H^2(\X_t,\ZZ))$ i.e.,
when $t$ goes around the origin.
Since $T$ preserves the isotropic fiber class $e$ of $f_t$, which is associated to the maximal degeneration $\X\to \Delta^*$,
$T$ only changes the marking hence preserves $f_t$ by the theory of harmonic integrals (\cite{Hodge}).
However, note that the induced change of marking does change the hyperK\"ahler rotation $\X_t^{\vee}$,
hence $\X_t^{\vee}$ does not form a family. Indeed, as $t$ goes around $0$, then $\X_t^{\vee}$ will be twisted i.e., have different complex structures,
while preserving their Jacobian fibrations.
\end{Rem}

\begin{Rem}
In the context of Theorem \ref{SLAG.OO}, note that each family
$(\X'|_{\Delta^*},\mathcal{L}'|_{\Delta^*})\to \Delta^*$ may have several description
as a quotient of K-trivial surfaces by finite group $H$, as
$(\X|_{\Delta^*},\mathcal{L}|_{\Delta^*})\to
(\X'|_{\Delta^*},\mathcal{L}'|_{\Delta^*})$. Indeed, for some
$(\X'|_{\Delta^*},\mathcal{L}'|_{\Delta^*})$, one of covering family
$(\X|_{\Delta^*},\mathcal{L}|_{\Delta^*})$ is a family of polarized abelian surfaces while another is
a family of polarized K3 surfaces.

Also note that if the fibers $\mathcal{X}_t$ are K3 surfaces, so are $\mathcal{X}'_t$.
However, if the fibers $\mathcal{X}_t$ are abelian surfaces, $\mathcal{X}'_t$ becomes abelian surfaces if $H$ only consists of translations,
while $X'_t$ becomes K3 surfaces otherwise.
See e.g., \cite{Fujiki}, \cite[p. 329]{Huy} and the
references therein, for more details and examples.
\end{Rem}


\subsection{Higher dimensional abelian varieties case}\label{ppav}

In this subsection, we prove Conjecture \ref{metricSYZ} for
maximally degenerating
abelian varieties in any dimension.
The main tool is the following explicit description of the degenerations,
which itself may be of interest.
It is a complex analytic version of
the well-known non-archimedean
uniformization of
abelian variety over a
non-archimedean
valued field
by
Mumford \cite{Mum72} and
Faltings-Chai \cite{FC}.
Although \cite{Od} included the
statement with a rough sketchy proof,
since we could not find precise discussion in the literature, we include it here
of somewhat more global statement,
with a
more topological or analytic proof.

\begin{Lem}[General description of maximal degenerating abelian varieties cf., \cite{FC}, \cite{Od}]\label{FC}
Take any maximally degenerating
family
of polarized abelian varieties of
any dimension $g$,
which we denote again as
$(\X|_{\Delta^*},\mathcal{L}|_{\Delta^*})$
over $\Delta^*$.
Suppose that the polarization is of type $(e_1,\cdots,e_g)$
where $e_i (1\le i\le g)$ are all positive integers which satisfy
$e_i\mid e_{i+1}$ for any $i$.
(For instance, $e_i$ are all $1$ for principal polarization.)
Then, these families are characterized explicitly as
\begin{align}\label{mav}
((\C^*)^g \times \Delta^*)
/\mathbb{Z}^g\to \Delta^*,
\end{align}
where $\mathbb{Z}^g \ni {}^t  (m_1,\cdots,m_g)$
acts on
$(\C^*)^g \times \Delta^*\ni
 (z_1,\cdots,z_g,t)$
 by
 $$(z_1,\cdots,z_g,t)\mapsto
 \left(
  (z_1,\cdots,z_g)\cdot {}^t
 \left( \prod_{1\le j\le g}p_{i,j}(t)^{m_j}\right)
 _{1\le i\le g},t \right)
 $$
for some meromorphic functions $p_{i,j}(t)\in \C((t))$.
 Here, meromorphic functions $q_{i,j}(t):=p_{i,j}(t)^{e_i}\in \C((t))$
 satisfies the following:
 \begin{enumerate}
     \item possible pole only at $t=0$.
     \item each $q_{i,j}(t)$ converges at
     $t\in \Delta^*$.
     \item $(q_{i,j}(t))_{1\le i,j\le g}$ is a symmetric matrix.
 \end{enumerate}
 From here, we
 often describe such family \eqref{mav} simply as either
 $$\bigsqcup_{t} (\mathbb{C}^*)^g/\langle p_{i,j}(t) \rangle \ \textrm{or} \
 \left( (\C^*)^g \times \Delta^*\right)
 /\langle p_{i,j}(t) \rangle.$$
\end{Lem}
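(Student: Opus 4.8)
The plan is to establish the explicit uniformization \eqref{mav} by transporting the classical non-archimedean Mumford--Faltings--Chai construction to the complex analytic setting, using the family of complex tori over $\Delta^*$ as a ``rigid analytic family'' in disguise. First I would fix the topological picture: a maximally degenerating family of $g$-dimensional polarized abelian varieties has maximal unipotency of monodromy, so the limiting mixed Hodge structure on $H^1(\X_t)$ has a weight filtration whose graded pieces are each of rank $g$ and pure of weight $0$ and $2$ respectively. This produces a canonical exact sequence $0\to \ZZ^g \to H_1(\X_t,\ZZ) \to \ZZ^g \to 0$ that is monodromy-invariant, and dually identifies the ``toric part'' $(\C^*)^g$ together with a cocharacter lattice $\ZZ^g$ acting by multiplication. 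The quotient presentation $((\C^*)^g\times\Delta^*)/\ZZ^g$ is then forced by realizing $\X_t$ as $(\C^*)^g$ modulo the image of the period lattice; the multiplier $\prod_j p_{i,j}(t)^{m_j}$ is exactly the exponential of the off-diagonal period integrals, which I would read off from the Hodge-theoretic period map.

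Next I would pin down the analytic properties of the multipliers. The periods $p_{i,j}(t)$ are single-valued multivalued functions a priori, but maximal unipotency forces the $e_i$-th powers $q_{i,j}(t)=p_{i,j}(t)^{e_i}$ to be genuinely single-valued meromorphic functions on $\Delta^*$ with at worst a pole at the origin; this is where properties (i) and (ii) come from. Property (iii), symmetry of $(q_{i,j}(t))$, is the complex-analytic incarnation of the Riemann bilinear relations (the symmetry of the period matrix), combined with the compatibility of the multipliers with the polarization of type $(e_1,\dots,e_g)$; the appearance of $e_i$ in the definition $q_{i,j}=p_{i,j}^{e_i}$ is precisely the bookkeeping that encodes the polarization type through the matrix $E=\mathrm{diag}(e_1,\dots,e_g)$. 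I would verify symmetry by showing that the polarization form $\cL_t$, expressed via a theta-type factor of automorphy on $(\C^*)^g$, is well-defined on the quotient if and only if the relevant matrix is symmetric, so that symmetry is equivalent to the existence of the polarization.

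The main obstacle I expect is the converse direction and the honest verification of \emph{convergence and properness} in the complex-analytic category, as opposed to the formal or non-archimedean one. In Mumford's non-archimedean construction the analogous theta series converge automatically because the valuations of the $q_{i,j}$ tend to $+\infty$; here one must check that the corresponding complex power series in $z$ and $t$ actually converge on $(\C^*)^g\times\Delta^*$ for $|t|\ll 1$ and that the $\ZZ^g$-action is properly discontinuous with compact quotient, so that \eqref{mav} really is a proper family of compact complex tori. This is exactly the ``more topological or analytic proof'' the authors promise, and the crux is controlling the growth of $q_{i,j}(t)$ as $t\to 0$: the positivity $\mathrm{val}_t$ of the diagonal entries (coming from the degeneration being maximal and the polarization positive) must dominate the off-diagonal behavior to guarantee both convergence of the defining theta functions and the fact that the quotient fibers are genuine abelian varieties rather than mere complex tori. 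I would handle this by writing the period matrix in the form $\tau(t)=\frac{1}{2\pi i}(\text{nilpotent log term}) + (\text{holomorphic})$ via the nilpotent orbit theorem, which makes the imaginary part grow like $-\log|t|$ along the monodromy-invariant directions, thereby forcing the required positivity and, with it, all three stated properties together with properness.
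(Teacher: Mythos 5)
Your proposal follows essentially the same route as the paper: both arguments come down to the period map into the Siegel upper half space $\mathbb{H}_g$, the observation that maximal unipotency confines the monodromy to translations of the period matrix $\Lambda_t$ by $E\mathbb{Z}^g$ (the paper phrases this via the unipotent radical $U(F)_{\mathbb{Z}}$ at the $0$-cusp of the Satake--Baily--Borel compactification, you via the weight filtration of the limiting mixed Hodge structure --- the same information), exponentiation of the periods to produce single-valued multipliers, symmetry of $(q_{i,j})$ from the Riemann relations (i.e.\ from the period matrix lying in $\mathbb{H}_g$), and meromorphy at $t=0$ from the logarithmic growth of ${\rm Im}\,\Omega(t)$, for which your appeal to the nilpotent orbit theorem is a perfectly good substitute for the paper's terser positivity argument. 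Two points of divergence are worth flagging. First, the ``main obstacle'' you identify --- convergence of theta-type series and properness of the $\mathbb{Z}^g$-action --- is not actually an issue in the direction the lemma requires: the family is \emph{given} as a proper family of polarized abelian varieties, and the presentation $(\C^*)^g/\langle p_{i,j}(t)\rangle$ is just the re-coordinatization $\C^g/\Lambda_t\cong(\C^g/E\mathbb{Z}^g)/\Omega'(t)\mathbb{Z}^g$, so compactness and proper discontinuity are automatic from $\Lambda_t$ being a full lattice; no theta functions enter. Second, your bookkeeping of the polarization type differs slightly from the paper's: there the $p_{i,j}$ are already single-valued holomorphic on $\Delta^*$ (because the monodromy shifts the $i$-th row of the periods by $e_i\mathbb{Z}$ and the coordinate $Z_i=e^{2\pi\sqrt{-1}z_i/e_i}$ absorbs exactly that), and the $q_{i,j}=p_{i,j}^{e_i}$ arise as the multipliers of the principally polarized quotient by $\mu_{e_1}\times\cdots\times\mu_{e_g}$, which is where the symmetry is read off; your version, in which only the $e_i$-th powers are single-valued, would need this adjusted.
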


\begin{proof}
Note the polarized family is induced from a map
$\tilde{\varphi}\colon \mathbb{H}\to \mathbb{H}_g$
where $\mathbb{H}(=\mathbb{H}_1)$ is the upper half plane, $\mathbb{H}_g$ is the
(higher dimensional)
Siegel upper half space of dimension $\frac{g(g+1)}{2}$,
which descends to $\varphi\colon \Delta^*\to {\rm Sp}(E,\mathbb{Z})\backslash \mathbb{H}_g$.
Here, we set
$$E:=
\begin{pmatrix}
e_1 & 0 & \cdots & 0 \\
0 & e_2 & \cdots & 0 \\
0 & 0     & \ddots & 0 \\
0 & 0     & \cdots & e_g
\end{pmatrix},
\quad
\tilde{E}:=
\begin{pmatrix}
0 & E \\
-E & 0
\end{pmatrix},$$
and
$$
{\rm Sp}(E,\ZZ)
:=\{h\in {\rm GL}(2g,\ZZ)\mid h\tilde{E} {}^{t}h=\tilde{E}\}.$$
We denote ${\rm Sp}(E,\ZZ)$
also as $\Gamma$.
Consider the (rational)
$0$-cusp $F$ of $\mathbb{H}_g$ in the Satake-Baily-Borel compactification
(before dividing by $\Gamma$).
Then the monodromy lies in the unipotent radical (cf., \cite[Chapter III]{AMRT})
\begin{align*}
U(F)_{\mathbb{Z}}&=
\biggl\{\begin{pmatrix}
I_g &  B \\
0 & I_g
\end{pmatrix}\mid B\in {\rm GL}(g,\R) \biggr\} \cap  \Gamma \\
&=
\biggl\{\begin{pmatrix}
I_g &  B \\
0 & I_g
\end{pmatrix}\mid B\in {\rm GL}(g,\ZZ),
BE = {}^t(BE) \biggr\}.
\end{align*}
Therefore, if we denote the fiber $\mathcal{X}_t (t\in \mathbb{H} {\text{ or }} \Delta^*)$ as
$\mathbb{C}^g/
\begin{pmatrix}
E \\
\Lambda_t
\end{pmatrix}\mathbb{Z}^{2g}$
with $\Lambda_t \in \mathbb{H}_g$,
for each $i\in \{1,\cdots,g\}$,
$\Lambda_t (0,\cdots,0,\overbrace{1}^{i-\text{th}},0,\cdots,0)$ is invariant modulo
$E \mathbb{Z}^{g}.$
Hence, we can write $\X^*:=\X |_{\Delta^*}$ as
$\sqcup_t ((\mathbb{C}^*)^g)/\langle p_{i,j}(t) \rangle$ for some
holomorphic functions $p_{i,j}$s over $\Delta^*$.
Here, consider a finite group action
$\mu_{e_1}\times \cdots \times \mu_{e_g}:\X^*\to \X^*$
defined by $(z_1,\dots, z_g,t)\mapsto (z_1^{e_1},\dots , z_g^{e_g}, t)$.
Then, note that the natural quotient of $\mathcal{X}^*$ by
$\mu_{e_1}\times \cdots \times \mu_{e_g}$ is a family of
principally polarized abelian varieties over $\Delta^*$,
which can be also written as
$\sqcup_t ((\mathbb{C}^*)^g)/\langle q_{i,j}(t) \rangle$ for some
holomorphic functions $q_{i,j}$s over $\Delta^*$. Here,
since
$(\frac{{\rm Im}(\log q_{i,j}(t))}{2\pi \sqrt{-1}})_{1\le i,j\le g}$
lie inside $\mathbb{H}_g$ for any $t\in \Delta^*$,
$q_{i,j}=q_{j,i}$ and $p_{i,j}(t)^{e_i}=q_{i,j}(t)$.
Again, since $(\frac{{\rm Im}(\log q_{i,j}(t))}{2\pi \sqrt{-1}})_{1\le i,j\le g}$
lie inside $\mathbb{H}_g$ for any $t\in \Delta^*$,
it follows that all
$q_{i,j}(t)$ and
$p_{i,j}(t)$ are meromorphic.
\end{proof}

Below we give additive description as well and
fix notation.

\subsubsection{Notation}\label{degav.notation}
If we describe the above abelian varieties
in Lemma \ref{FC}
in additive manner,
\begin{align}
\mathcal{X}_t   &
= \C^g/
	\begin{pmatrix}
	e_1 & 0 & 0 &\cdots & 0 &                \\
	0 & e_2 & 0 & \cdots & 0 &                \\
	0 & 0 & e_3 & \cdots & 0 &      \dfrac{ e_i\log p_{i,j}(t)}{2\pi\sqrt{-1}}\\
	   &    &    & \ddots &    &                \\
	0 & 0 & 0 & \cdots & e_g &                \\
	\end{pmatrix}\mathbb{Z}^{2g}
	\label{additive1}
	\\
	&
	\label{2}
	=((\C^*)^g /\langle p_{i,j}(t) \rangle_{i,j})
\end{align}
where the first identification is through the
exponential map
$$z_i\mapsto e^{\frac{2\pi \sqrt{-1}z_i}{e_i}}=:Z_i (i=1,\cdots,g).$$
If we set
\begin{align}
E&:={\rm diag}(e_1,\cdots,e_g)\\
 &= \begin{pmatrix}
	e_1 & 0 & 0 &\cdots & 0 &                \\
	0 & e_2 & 0 & \cdots & 0 &                \\
	0 & 0 & e_3 & \cdots & 0 &     \\
	   &    &    & \ddots &    &                \\
	0 & 0 & 0 & \cdots & e_g &                \\
	\end{pmatrix}
\end{align}
\begin{align}\label{Omega}
\Omega (t):=\left( \frac{1}{2\pi \sqrt{-1}} \log p_{i,j}(t)\right)_{i,j},
\end{align}
and
\begin{align}
\Omega' (t):=\left( \frac{e_i}{2\pi \sqrt{-1}} \log p_{i,j}(t)\right)_{i,j},
\end{align}
then the dividing lattice of the above \eqref{additive1}
splits as
$E\mathbb{Z}^g\oplus \Omega'(t)\mathbb{Z}^g$ i.e.,
\begin{align}\label{Lambdat}
\Lambda_t&:=\begin{pmatrix}
	e_1 & 0 & 0 &\cdots & 0 &                \\
	0 & e_2 & 0 & \cdots & 0 &                \\
	0 & 0 & e_3 & \cdots & 0 &      \dfrac{e_i \log p_{i,j}(t)}{2\pi\sqrt{-1}}\\
	   &    &    & \ddots &    &                \\
	0 & 0 & 0 & \cdots & e_g &                \\
	\end{pmatrix}\mathbb{Z}^{2g}
	\\
	&=E\mathbb{Z}^g \oplus \Omega'(t)\mathbb{Z}^g.
\end{align}
Note that, $\Omega'(t)=E\Omega(t)$ is symmetric, and $\Omega(t)$ (resp.,
$\Omega'(t)$)
is determined up to $\ZZ^g$
(resp.,
$E\ZZ^g$).

 We continue to use this notation for the coordinates $z_i$ and $Z_i$,
 and set $x_i:={\rm Re}(z_i), y_i:={\rm Im}(z_i)$.
 We write
 \begin{align}\label{alpha.beta}
 \frac{\log p_{i,j}(t)}{2\pi \sqrt{-1}}=\alpha_{i,j}(t)+\sqrt{-1} \beta_{i,j}(t),
 \end{align}
 with an ambiguity of $\alpha_{i,j}(t)$ modulo $\ZZ$.
 Indeed, if $t$ goes around the origin $0$, the monodromy effect is nontrivial only on $\alpha_{i,j}$ but trivial on $\beta_{i,j}(t)$.
 Similarly, for $q_{i,j}(t)=p_{i,j}(t)^{e_i}$, we write
 \begin{align}\label{alpha.beta2}
 \frac{\log q_{i,j}(t)}{2\pi \sqrt{-1}}=\alpha'_{i,j}(t)+\sqrt{-1} \beta'_{i,j}(t).
 \end{align}
In particular, $(\beta'_{i,j}(t))={\rm Im} \Omega' (t)$.

 Below, we prepare a basic lemma
 on the symmetry of maximally degenerating
 family, for the next theorem
 \ref{SLAG.ppav} \eqref{fin.sym.thm}
 which discusses finite quotients of
 abelian varieties.
 We explore more details in
 later section \ref{sec.sym}.

 \begin{Lem}\label{fin.sym.lem}
 For the family of above Lemma \ref{FC},
 i.e., $$\X^*=\sqcup_{t\in \Delta^*}((\C^*)^g /\langle p_{i,j}(t) \rangle_{i,j})_t \to \Delta^*_t,$$
 we continue to use the
 notation in \eqref{degav.notation}.

 Suppose there is an action of a group $H$  holomorphically on $\X^*$ preserving the fibers
 $\X_t$
 and $c_1(\mathcal{L}_t$).
 We do not assume it preserves the $0$-section.
 For each $t\in \Delta^*$, recall that
 \begin{align}
 \Lambda_t&=E \mathbb{Z}^g \oplus \Omega'(t)\mathbb{Z}^g
 \label{split}
 \end{align}
 as a sublattice of $\C^g$.
 Then, for any $h\in H$,
 the induced $h_*\colon H_1(\mathcal{X}_t,\mathbb{Z})
 \to H_1(\mathcal{X}_t,\mathbb{Z})$ preserves the
 first direct summand $\mathbb{Z}^g$ of the above
 \eqref{split}. Restriction of $h_*$ to it is
 denoted as $l_h\in {\rm GL}(g, \ZZ)$.
 Further,
 $$
 \begin{array}{cccc}
 l\colon &H                     & \longrightarrow &          {\rm GL}(g,\ZZ)            \\
 & \rotatebox{90}{$\in$} &                 & \rotatebox{90}{$\in$} \\[-1pt]
 &h                     & \longmapsto     & l_h
 \end{array},
 $$
 is a group homomorphism.
 \end{Lem}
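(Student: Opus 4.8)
The plan is to reduce the statement to a monodromy-invariance computation on $H_1$. First I would invoke the classical fact that every biholomorphism of a complex torus is affine: for fixed $t$ the action of $h\in H$ on $\X_t=\C^g/\Lambda_t$ has the form $z\mapsto A_h z+b_h$ with $A_h\in\GL(g,\C)$ and $A_h\Lambda_t=\Lambda_t$. Since we do not assume the $0$-section is preserved, $b_h$ may be nonzero, but translations act trivially on homology, so the induced map $h_*\colon H_1(\X_t,\ZZ)=\Lambda_t\to\Lambda_t$ is exactly $A_h|_{\Lambda_t}\in\GL(\Lambda_t)\cong\GL(2g,\ZZ)$; in particular only the holomorphicity and fiber-preservation of $h$ are used here. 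Because $h$ commutes with the projection $\pi\colon\X^*\to\Delta^*$ and preserves each fiber, it defines an automorphism of the local system $R_1\pi_*\ZZ$ over $\Delta^*$; any such automorphism is flat, hence commutes with the monodromy operator $T$ on a fixed fiber $H_1(\X_{t_0},\ZZ)$.

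The heart of the argument, which I would carry out next, is to identify the first summand $E\ZZ^g$ of \eqref{split} intrinsically as the lattice of monodromy invariants. In the basis given by the columns $\epsilon_1,\dots,\epsilon_g$ of $E$ and $\omega_1,\dots,\omega_g$ of $\Omega'(t)$, going once around $0$ replaces $\log q_{i,j}(t)$ by $\log q_{i,j}(t)+2\pi\sqrt{-1}\,\mathrm{val}_t(q_{i,j})$. Since $q_{i,j}=p_{i,j}^{e_i}$ with $p_{i,j}$ meromorphic, we have $\mathrm{val}_t(q_{i,j})=e_i\,\mathrm{val}_t(p_{i,j})$, so the induced shift of the column $\omega_j$ is the integral vector $\sum_i \mathrm{val}_t(p_{i,j})\,\epsilon_i\in E\ZZ^g$. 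Thus $T$ fixes each $\epsilon_i$ and sends $\omega_j\mapsto\omega_j+\sum_i N_{i,j}\epsilon_i$, where $N:=(\mathrm{val}_t(p_{i,j}))_{i,j}$; in block form $T=\begin{pmatrix} I_g & N\\ 0 & I_g\end{pmatrix}$. The maximal unipotency of the monodromy (the defining property of a maximal degeneration, cf. Conjecture \ref{metricSYZ}) is equivalent to $\det N\neq 0$, whence $\ker(T-\mathrm{id})_\ZZ=\{(a,b)\in\ZZ^g\oplus\ZZ^g\mid Nb=0\}=E\ZZ^g\oplus 0$ is precisely the first summand.

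With this in hand the conclusion is immediate. Since $h_*$ commutes with $T$, it preserves $\ker(T-\mathrm{id})=E\ZZ^g$, so $h_*(E\ZZ^g)=E\ZZ^g$; identifying $E\ZZ^g$ with $\ZZ^g$ via the basis $\epsilon_1,\dots,\epsilon_g$ yields the restriction $l_h\in\GL(g,\ZZ)$, which is manifestly independent of $t$ because the summand $E\ZZ^g\subset\C^g$ is itself $t$-independent. Finally, functoriality of the homology pushforward gives $(h_1h_2)_*=(h_1)_*(h_2)_*$, and restricting both sides to the preserved summand $E\ZZ^g$ yields $l_{h_1h_2}=l_{h_1}l_{h_2}$, so $l$ is a group homomorphism.

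The main obstacle is the middle step, relating the abstract invariants $\ker(T-\mathrm{id})$ to the explicit summand $E\ZZ^g$. This rests on two points I would verify carefully: that the maximal-unipotency hypothesis genuinely forces $N$ to be nondegenerate (so that $Nb=0$ with $b$ integral forces $b=0$), and that the integrality $e_i\mid\mathrm{val}_t(q_{i,j})$ coming from $q_{i,j}=p_{i,j}^{e_i}$ is exactly what confines the monodromy shift to $E\ZZ^g$ rather than to the finer lattice $\bigoplus_i\ZZ\mathbf{e}_i$; the monodromy ambiguity of $\alpha_{i,j}$ modulo $\ZZ$ noted in \eqref{alpha.beta} should be tracked to confirm it contributes only this integral shift.
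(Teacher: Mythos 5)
Your proof is correct, but it takes a genuinely different route from the paper's. The paper argues metrically: since $H$ preserves $c_1(\mathcal{L}_t)$, each $h$ acts by isometries of the flat K\"ahler metric on $\X_t$ for every $t$, and the explicit Gram matrix asymptotics (with $(\beta'_{i,j}(t))\sim \frac{-\log|t|}{2\pi}(B'_{i,j})$, $(B'_{i,j})$ positive definite) show that the vectors of the first summand have length tending to $0$ while any lattice vector with nonzero $\Omega'(t)\ZZ^g$-component has length tending to $\infty$ as $t\to 0$; an isometry valid for all small $t$ therefore cannot move $E\ZZ^g$ out of itself. You instead characterize $E\ZZ^g$ intrinsically as $\ker(T-\mathrm{id})$ for the monodromy $T=\bigl(\begin{smallmatrix} I_g & N\\ 0 & I_g\end{smallmatrix}\bigr)$ with $N=(\mathrm{val}_t p_{i,j})$ nondegenerate by maximal unipotency, and then use that a fiber-preserving automorphism induces an endomorphism of the local system $R_1\pi_*\ZZ$, hence commutes with $T$. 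Both arguments are sound, and both need the action to exist over a full punctured neighborhood of $0$ (yours to have a monodromy to commute with, the paper's to let $t\to 0$). What your route buys: it never uses the hypothesis that $H$ preserves $c_1(\mathcal{L}_t)$, so it proves the invariance of the summand for an arbitrary holomorphic fiber-preserving action, and it explains conceptually why $E\ZZ^g$ is canonical (it is the monodromy-invariant sublattice, i.e.\ the weight-$0$ part of the limit MHS on $H_1$). What the paper's route buys: it stays entirely within the metric asymptotics that the surrounding results (Theorems \ref{SLAG.ppav}, \ref{revised projection}) already require, and the positive definiteness of $(B'_{i,j})$ it invokes is the same input as your $\det N\neq 0$. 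Your two flagged verification points (nondegeneracy of $N$ from maximal degeneration, and integrality of $\mathrm{val}_t p_{i,j}$ confining the shift to $E\ZZ^g$) are both genuine and both check out, the latter because the $p_{i,j}$ are themselves meromorphic by Lemma \ref{FC}.
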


 \begin{proof}
 The $H$-action on $\mathcal{X}_t$ induces
 a complex linear transformation
 $f_h\in {\rm GL}(g,\C)$, i.e., at the level of universal covers of $\X_t$s,
 and what we want to show is that
 the image lies inside ${\rm GL}(g,\ZZ)$.
 If we take only real $t\in \R_{>0}$, we can take a (continuous) branch of $\beta_{i,j}(t)$ to
 avoid possible ambiguity due to
 the monodromy.
 Then, each $\Lambda_t$ for $t\in \R_{>0}$ can be canonically identified with $\ZZ^{2g}$. Take $e_1,\cdots, e_g, f_1,\cdots,f_g$ as
 the standard basis of $\Lambda_t=\ZZ^{2g}$, i.e., set
 $$e_i:=(0,\cdots,0,\overbrace{1}^{i-\text{th}},0,\cdots,0)$$
 and
 $$f_i:=(0,\cdots,0,\overbrace{1}^{(i+g)-\text{th}},0,\cdots,0)$$
 for each $i \in \{1,\cdots,g\}$.

 Now we prove the assertion by contradiction. Assume the contrary i.e., there is some $h\in H$ and $1\le i\le g$ such that
 $l_h(e_i)$ does not sit inside
 $\sum_{1\le i\le g}\mathbb{R}e_i$.
 Since ${\rm GL}(\Lambda_t)={\rm GL}(2g,\ZZ)$ is a discrete group,
 the $l_h$ is induced by  a particular isomorphism $g_h \colon \Lambda_t\to \Lambda_t$ for {\it any} $t$.
 That is, for any $h$ and $t\in \Delta^*$, $l_h(\Lambda_t)=\Lambda_t$ so that $g_h$ is obtained as a restriction of $l_h$.
 From the assumption that imaginary part of
 $g_h(e_i)=l_h(e_i)$ is not the zero vector,
 if we write
 $$g_h(e_i)=\sum_{1\le j\le g}a_j e_j+ \sum_{1\le j\le g}b_j f_j.$$
 where all $a_j, b_j$s are integers,
 one can assume $b_j\neq 0$ for some $j$.
 Note $$(\beta'_{i,j}(t))_{1\le i,j\le g}\sim \dfrac{-\log|t|}{2\pi}(B'_{i,j})_{1\le i,j\le g},$$
 with positive definite symmetric matrix $(B'_{i,j})$, a part of Faltings-Chai degeneration data.
 Here, $\sim$ means
 the ratio converges to $1$ when $t\to 0$.
 Also, the flat metric's Gram matrix is,
 if we set
 $X(t)=(\alpha'_{i,j}(t))_{i,j}$, $Y(t)=(\beta'_{i,j}(t))_{i,j}$,
 then
 \begin{align}
 &\begin{pmatrix}
 E &   0\\
 X(t) & Y(t)\\
 \end{pmatrix}
 \begin{pmatrix}
 Y(t)^{-1} &   0\\
 0 & Y(t)^{-1}\\
 \end{pmatrix}
 \begin{pmatrix}
 E &   X(t)\\
 0 & Y(t)\\
 \end{pmatrix}\\
 &=\begin{pmatrix}
 EY(t)^{-1}E    &   EY(t)^{-1}X(t)\\
 X(t)Y(t)^{-1}E & X(t)Y(t)^{-1}X(t)+Y(t)\\
 \end{pmatrix}
 \end{align}
 (c.f., e.g. \cite[(3) during the proof of Theorem 2.1]{Od}) so that in particular
 $|e_i| \to 0$ while $|f_h(e_i)|\to \infty$ for $t\to 0$. Here, $| \cdot |$ denotes the length
 with respect to the canonical flat K\"ahler metric $g_{\rm KE}$ on $\X_t$.
 This contradicts with the fact that
 $H$-action on $\mathcal{X}_t$ is an isometry for all $t\neq 0$. Hence the assertion is proven by contradiction.
 \end{proof}

Now, we are ready to show the existence and description of
special Lagrangian fibrations for maximally degenerating abelian varieties.

 \begin{Thm}\label{SLAG.ppav}
 For any maximally degenerating
 family $(\X|_{\Delta^*},\mathcal{L}|_{\Delta^*})/\Delta^*$
 of polarized abelian varieties of dimension $g$,
 we again denote by $\omega_t$  the K\"ahler form
 of the flat metric $g_{\rm KE}(\X_t)$
 with $[\omega_t]=c_1(\mathcal{L}_t)$.
 We also take a certain non-zero element
 $(0\neq ) \Omega_t\in H^0(\X_t,\omega_{\X_t}=\wedge^g \Omega_{\X_t})$.
 (See the proof for the actual choice of
 $\Omega_t$.)
 Recall that
 $\X_t$ can be described as $\C^g/\Lambda_t$ for the lattice $\Lambda_t$ of the form \eqref{Lambdat} of Lemma \ref{fin.sym.lem}.

 Then, the following holds:

 \begin{enumerate}
 \item \label{SLAG.ppav.existence}
 For each $t \in \Delta^*$,
 special Lagrangian fibration $f_t\colon \X_t \to \mathcal{B}_t$
 with respect to $\omega_t$ and $\Omega_t$ exists
 ($g=2$ case is proven in Theorem \ref{SLAG.OO}).
 $\mathcal{B}_t$ is a $g$-dimensional real torus described as
 $\R^g/ (\beta_{i,j}(t))_{1\le i,j\le g}\ZZ^g$
  under the notation \eqref{alpha.beta}.
 Furthermore, they fit into a family i.e.,
 there is a continous map $f\colon \X|_{\Delta^*}\to \cup_{t\neq 0}\mathcal{B}_t$
 for a certain extended topology on $\cup_{t\neq 0}\mathcal{B}_t$, which restricts to $f_t$ for each $t\neq 0$.
 \item \label{A.ppav}
 The tropical affine structure $\nabla_A(t)$ on $\mathcal{B}_t=\R^g/ (\beta_{i,j}(t))_{1\le i,j\le g}\ZZ^g$ is descended from
 the integral affine structure on $\R^g$
 whose integral points are
 $ (\gamma_{i,j}(t))_{1\le i,j\le g}\mathbb{Z}^g$,
 where $$(\gamma_{i,j}(t))_{i,j}:= E^{-1} \cdot (\beta'_{i,j}(t))_{i,j} \cdot E^{-1}$$ under the notation \eqref{alpha.beta2}.
 \item \label{B.ppav}
 The tropical affine structure $\nabla_B(t)$ on $\mathcal{B}_t=\R^g/ (\beta_{i,j}(t))_{1\le i,j\le g}\ZZ^g$ is descended from
 the integral affine structure on $\R^g$ whose integral points are
 $\frac{-\log |t|}{2\pi}\ZZ^g$.
 \item \label{fin.sym.thm}
 Suppose there is a fiber-preserving holomorphic action of a group $H$ on $(\mathcal{X},c_1(\mathcal{L}))$
 (as in Lemma \ref{fin.sym.lem}).
 Then for each $t\neq 0$,
 the $H$-action on $\mathcal{X}_t$
 descends to that on
 $\mathcal{B}_t$ which is unified into a
 continuous $H$-action on
 $\cup_{t\neq 0}\mathcal{B}_t$ with respect to
 which $\X^*:=\X|_{\Delta^*}\to \cup_{t\neq 0}\mathcal{B}_t$
 is $H$-equivariant.

 Further, if the group $H$ also fixes the isomorphic class of  $\mathcal{L}_t$ for each $t\neq 0$  (not only their $c_1$),
 then $H$ must be a finite group. In that case,
 we can take the quotient of the map $f$ by $H$.
 We denote the obtained map as
 $f/H\colon \X^*/H\to \cup_{t\neq 0}(\mathcal{B}_t/H)$.

 \item \label{fin.sym.thm2}
 Under the situation of above
 \eqref{fin.sym.thm}, suppose further that
 the $H$-action on $\mathcal{X}^*$
 preserves relative holomorphic $n$-forms i.e.,
 acts trivially on  $\omega_{\mathcal{X}^*/\Delta^*}=\mathcal{O}(K_{\mathcal{X}^*/\Delta^*})$.
 Then, for each $t\neq 0$, $f/H$ is restricted to a
 special  Lagrangian fibration with respect to the descent of
 $\omega_t$ and $\Omega_t$. Further, they are of the form
 $g_t\colon \X_t/H \to \mathcal{B}_t/H$, where the additional structures on $\mathcal{B}_t/H$ is those descended from the ones described above
 \eqref{A.ppav}, \eqref{B.ppav}.
 \end{enumerate}
 \end{Thm}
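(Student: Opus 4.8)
The plan is to realize all five parts from the single explicit flat model $\X_t=\C^g/\Lambda_t$ with $\Lambda_t=E\ZZ^g\oplus\Omega'(t)\ZZ^g$ of \eqref{Lambdat}, taking the special Lagrangian fibration to be the ``projection to the normalized imaginary directions''. Writing $z_j=x_j+\sqrt{-1}\,y_j$, the summand $E\ZZ^g$ is real while $\Omega'(t)\ZZ^g$ has imaginary part $({\rm Im}\,\Omega'(t))=(\beta'_{i,j}(t))$; hence $z\mapsto E^{-1}\,{\rm Im}(z)$ (equivalently the coordinates $-\tfrac{1}{2\pi}\log|Z_i|=y_i/e_i$ coming from $Z_i=e^{2\pi\sqrt{-1}z_i/e_i}$) kills $E\ZZ^g$ and sends $\Omega'(t)\ZZ^g$ onto $E^{-1}(\beta'_{i,j}(t))\ZZ^g=(\beta_{i,j}(t))\ZZ^g$, so it descends to $f_t\colon\X_t\to\mathcal{B}_t=\R^g/(\beta_{i,j}(t))\ZZ^g$ with fibers the real subtorus $\R^g/E\ZZ^g$. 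For \eqref{SLAG.ppav.existence} I would then check the two pointwise conditions along this subtorus: $\omega_t$ restricts to $0$ because $E\ZZ^g$ is a Lagrangian summand for the polarization form $\tilde E$, and ${\rm Im}(\Omega_t)$ restricts to $0$ once the phase of $\Omega_t$ is fixed so that $\Omega_t$ is a real multiple of $dz_1\wedge\cdots\wedge dz_g$, since $dz_j=dx_j$ there --- this is the choice of $\Omega_t$ referred to in the statement. Continuity of the glued $f$ over $\cup_{t\neq 0}\mathcal{B}_t$ follows from continuity of $\beta_{i,j}(t)$ in \eqref{alpha.beta}, after topologizing $\cup_{t\neq 0}\mathcal{B}_t$ as the quotient of $\R^g\times\Delta^*$ by the $t$-varying lattice.

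For the two affine structures \eqref{A.ppav} and \eqref{B.ppav} I would apply the standard recipe (Hitchin, Gross): relative to the Lagrangian $0$-section, integrating $\omega_t$ over the fiber cycles produces $\nabla_A(t)$ and integrating ${\rm Im}(\Omega_t)$ produces $\nabla_B(t)$. For $\nabla_A$ the period computation yields the integral lattice $(\gamma_{i,j}(t))\ZZ^g$ with $(\gamma_{i,j}(t))=E^{-1}(\beta'_{i,j}(t))E^{-1}$; the two factors of $E^{-1}$ appear precisely because the rescaling $z_i\mapsto Z_i$ divides both the $i$-th lattice direction and its dual period by $e_i$. For $\nabla_B$ the period of ${\rm Im}(\Omega_t)$ gives the integral lattice $\tfrac{-\log|t|}{2\pi}\ZZ^g$, reflecting the degeneration rate $(\beta'_{i,j}(t))\sim\tfrac{-\log|t|}{2\pi}(B'_{i,j})$ recorded in Lemma \ref{fin.sym.lem}. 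Since this is where normalization and sign errors are easiest to make, I would anchor the computation against the already-verified $g=2$ case of Theorem \ref{SLAG.OO}\eqref{ab.iv}.

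For the symmetry statements I would invoke Lemma \ref{fin.sym.lem} directly. Each $h\in H$ preserves the first summand of the splitting \eqref{split}, hence preserves the fiber subtorus $\R^g/E\ZZ^g$ of $f_t$, so it descends to an affine self-map of $\mathcal{B}_t$ whose integral linear part is read off from $l_h\in\GL(g,\ZZ)$ via its action on the quotient lattice $\Lambda_t/E\ZZ^g$. Because $l$ is a homomorphism and the translational parts vary continuously in $t$, these assemble into a continuous $H$-action on $\cup_{t\neq 0}\mathcal{B}_t$ making $f$ equivariant, proving the first half of \eqref{fin.sym.thm}. For finiteness, if $H$ fixes the isomorphism class of $\mathcal{L}_t$ (not merely $c_1$), then for a fixed $t_0$ the action embeds $H$ into the automorphism group of the polarized abelian variety $(\X_{t_0},\mathcal{L}_{t_0})$, which is finite --- it is an extension of a finite linear automorphism group by the finite group $K(\mathcal{L}_{t_0})$ of translations preserving $\mathcal{L}_{t_0}$ --- so $H$ is finite and the quotient $f/H$ exists. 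Finally for \eqref{fin.sym.thm2}, triviality of the $H$-action on $\omega_{\X^*/\Delta^*}$ gives $h^*\Omega_t=\Omega_t$, while preservation of $c_1(\mathcal{L}_t)$ together with uniqueness of the flat K\"ahler metric in its class forces $h^*\omega_t=\omega_t$; thus both $\omega_t$ and $\Omega_t$ descend to $\X_t/H$ and $f/H$ restricts to the special Lagrangian fibration $g_t\colon\X_t/H\to\mathcal{B}_t/H$ carrying the descended structures $\nabla_A(t)$, $\nabla_B(t)$.

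The hard part will be the affine-structure bookkeeping of \eqref{A.ppav} and \eqref{B.ppav}: pinning down the integral lattices $(\gamma_{i,j}(t))\ZZ^g$ and $\tfrac{-\log|t|}{2\pi}\ZZ^g$ with the correct placement of the two $E^{-1}$ factors and the $2\pi$ requires matching the period integrals of $\omega_t$ and ${\rm Im}(\Omega_t)$ against a consistently normalized basis of cycles, keeping the conventions for $\nabla_A$ versus $\nabla_B$ and for the rescaling $z_i\mapsto Z_i$ perfectly aligned. By contrast, the existence and Lagrangian/special verifications in \eqref{SLAG.ppav.existence} are essentially forced by flatness and the splitting of $\Lambda_t$, and the symmetry descent in \eqref{fin.sym.thm}, \eqref{fin.sym.thm2} is largely formal once Lemma \ref{fin.sym.lem} and the finiteness of polarized automorphism groups are available.
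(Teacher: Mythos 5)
Your proposal is correct and follows essentially the same route as the paper: project the explicit uniformization $\C^g/(E\ZZ^g\oplus\Omega'(t)\ZZ^g)$ onto the imaginary coordinates, verify the Lagrangian and special conditions fiberwise using the explicit Riemann form and the real normalization of $\Omega_t$, obtain $\nabla_A(t)$ and $\nabla_B(t)$ by the Hitchin--Gross period integrals, and derive the symmetry statements from Lemma \ref{fin.sym.lem} together with finiteness of the automorphism group of a polarized abelian variety. The only (harmless) difference is your normalization $z\mapsto E^{-1}\mathrm{Im}(z)$ of the projection, which lands directly on $\R^g/(\beta_{i,j}(t))\ZZ^g$ as in the statement, where the paper projects by $\mathrm{Im}(z)$ and identifies the target afterwards.
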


 \begin{Rem}
 A weaker version of the above statements \eqref{SLAG.ppav.existence} i.e.,
 existence of special Lagrangian fibrations in {\it large open subset of } $\mathcal{X}_t$
 are essentially proven also as the simple combination of \cite[Theorem 1.3]{YangLi} and \cite{Liu}.
 \eqref{SLAG.ppav.existence} of
 the above result refines
 it, even in an explicit
 way.
 \end{Rem}

 \begin{proof}
 We set $B'_{i,j}:={\rm val}_t(q_{i,j}(t))$.
  Recall that \cite{Od, OO, Goto} repeatedly proved that the
 Gromov-Hausdorff limit of $\X_t$ is identified with the $g$-dimensional torus with the
 Gram matrix $(B'_{i,j})_{1\le i,j\le g}$.
 Indeed,  the standard description of Riemann forms tells us that
 \begin{align}\label{Kahler.ppav}
 \omega_t=\frac{\sqrt{-1}}{2}\sum_{1\le i,j\le g}\gamma_{i,j}(t)dz_i\wedge d\overline{z_j},
 \end{align}
 where $(\gamma_{i,j}(t))_{i,j}=E\cdot (\beta'_{i,j}(t))_{i,j}^{-1}\cdot E$ for the
  the basis $\{ z_i\}$ of $\C^g$ such that the period matrix is of the form $(I\ \Omega(t))$ under the notation \eqref{Omega}.

 From below, we use $$\Omega_{t}=
 \frac{-2\pi}{\log |t|} dz_1\wedge \cdots \wedge dz_g \in H^0(\mathcal{X}_t,K_{\mathcal{X}_t}).$$
 This choice is carefully made as multiplication by constant does change the associated
 special Lagrangian fibrations.

 \vspace{4mm}
 Now we prove \eqref{SLAG.ppav.existence}.
 We use Lemma \ref{FC} to set $\mathcal{X}_t=((\C^*)^g /\langle p_{i,j}(t) \rangle_{i,j})$, and
 use the description and the notations \eqref{2}, \eqref{alpha.beta} and \eqref{Kahler.ppav} above. Then,
 for each fiber $\mathcal{X}_t$,
 maps defined by $$
 \begin{array}{cccc}
 \tilde{f}_t &\colon     \C^g                 & \longrightarrow & \R^g              \\
 &\rotatebox{90}{$\in$} &                 & \rotatebox{90}{$\in$} \\
 &       (z_1,\cdots,z_g)             & \longmapsto     &(y_1,\cdots,y_g),
 \end{array}
 $$
 where $y_i = {\rm Im} z_i$,
 descend to
 $$f_t\colon \mathcal{X}_t\to \R^g/(\beta_{i,j})_{1\le i,j\le g}\ZZ^g = \R^g / {\rm Im} \Omega (t) \ZZ^g$$
under the notations \eqref{Omega} and  \eqref{alpha.beta}.

 We denotes an arbitrary fiber of $f_t$ as $F(\cong \R^g / \ZZ^g)$ whose coordinates are defined by $x_i={\rm Re} z_i$.
 From \eqref{Kahler.ppav}, $\omega_t|_F=0$.
 Also, it is easy to see that
 $${\rm Im}\left( \Omega_t=\frac{-2\pi}{\log |t|}dz_1\wedge \cdots \wedge dz_g\right)\Biggr|_F=0.$$
 Hence \eqref{SLAG.ppav.existence} follows. Next, \eqref{A.ppav} and \eqref{B.ppav} follows from the definition in \cite{Hit, Gross} and
 standard calculation below. Since the fibers of $f_t$ are the integral manifolds along the ``real" directions $x_i$s,
 the affine coordinates along $\nabla_A(t)$ (resp., $\nabla_B(t)$)
 of $\frac{\partial}{\partial y_i}$
  is determined as
 \begin{align*}
 &
 \Biggl( \int_{0\le x_1\le 1}
 \iota\biggl(\frac{\partial}{\partial y_i}
 \biggr)\sum_{1\le i,j\le g}\frac{\sqrt{-1}}{2}\gamma_{i,j}(dx_i+\sqrt{-1}dy_i)\wedge (dx_j-\sqrt{-1}dy_j)  ,\\
 &   \qquad \qquad \qquad \qquad \qquad \qquad\qquad\cdots \cdots,\\
 & \int_{0\le x_{g}\le 1}
 \iota\biggl(\frac{\partial}{\partial y_i}
 \biggr)\sum_{1\le i,j\le g}\frac{\sqrt{-1}}{2}\gamma_{i,j}(dx_i+\sqrt{-1}dy_i)\wedge (dx_j-\sqrt{-1}dy_j)\Biggr) \\
 &=(\gamma_{1,i},\cdots,\gamma_{g,i})
 \end{align*}
 (resp.,
 \begin{align*}
 &
 \Biggl(\int_{\Gamma_1 }
 \iota\biggl(\frac{\partial}{\partial y_i}
 \biggr){\rm Im}\left( \frac{-2\pi}{\log |t|} (dx_1+\sqrt{-1}dy_1)\wedge \cdots \wedge (dx_g+\sqrt{-1}dy_g)\right) ,\\
 &   \qquad \qquad \qquad \qquad \qquad \qquad\qquad\cdots \cdots,\\
 & \int_{\Gamma_g }
 \iota\biggl(\frac{\partial}{\partial y_i}
 \biggr){\rm Im}\left(  \frac{-2\pi}{\log |t|} (dx_1+\sqrt{-1}dy_1)\wedge \cdots \wedge (dx_g+\sqrt{-1}dy_g)\right) \Biggr)\\
 &= \frac{-2\pi}{\log |t|} (0,\cdots,0,\overbrace{1}^{i\text{-th}},0,\cdots,0),
 \end{align*}
 where $\Gamma_i$ is the $g-1$-cycle on $f_t^{-1}(y) = F_y$ of the form
 $$[0,1]\times \cdots \times \{ x_i\} \times  \cdots \times [0,1] \subset \R^g / \ZZ^g \cong F_y).$$
Set $\{w_i\}$ (resp., $\{ \check{w_i}\}$) as
the integral affine coordinates induced by $\nabla_A(t)$ (resp., $\nabla_B(t)$).
The above forms are rephrased as
$${}^t (w_1,\dots , w_g)= \left(E\left({\rm Im} \Omega'(t)\right)^{-1}E\right)  {}^t (y_1,\dots , y_g)$$
and
$${}^t (\check{w_1},\dots , \check{w_g})= \left(\frac{-2\pi}{\log |t|}\right) {}^t (y_1,\dots , y_g).$$
Hence, by taking the inverse matrices, we can see that $\nabla_A(t)$ and $\nabla_B(t)$ give the integral affine structures
on $ \R^g/{\rm Im} \Omega (t)\ZZ^g$
defined by $E^{-1}{\rm Im} \Omega'(t)E^{-1}\ZZ^g$ and
$\frac{-\log |t|}{2\pi}\ZZ^g$, respectively.
Note that $E^{-1}{\rm Im} \Omega'(t)E^{-1}={\rm Im} \Omega(t) E^{-1}$.
In particular, the Mclean metric $g_t$ with respect to $\{y_i\}$
is of the form $$\frac{-2\pi}{\log |t|}
E({\rm Im} \Omega(t))^{-1}.$$
Indeed, by definition, the Mclean metric $g_t$ is defined as follows:
$$\check{w_i}=g_t\left(\frac{\partial}{\partial w_i},\frac{\partial}{\partial w_j}\right)w_j.
$$
See also \cite[Definition 1.1]{Gross}.
Hence,
$$\left(g_t\left(\frac{\partial}{\partial w_i},\frac{\partial}{\partial w_j}\right)\right)_{i,j}
= \frac{-2\pi}{\log |t|} \left( {\rm Im} \Omega (t)\right) E^{-1}.
$$
Since $ E \left( {\rm Im} \Omega (t)\right)^{-1}= {}^t\left(  E \left( {\rm Im} \Omega (t)\right)^{-1}\right)$,
we obtain the following:
\begin{align*}
\left(g_t\left(\frac{\partial}{\partial y_i},\frac{\partial}{\partial y_j}\right)\right)_{i,j}
&= \frac{-2\pi}{\log |t|}\cdot {}^t\left(E\left( {\rm Im} \Omega (t)\right)^{-1}\right)   {\rm Im} \Omega (t) E^{-1} E
\left( {\rm Im} \Omega (t)\right)^{-1} \\
&=\frac{-2\pi}{\log |t|} E \left( {\rm Im} \Omega (t)\right)^{-1}.
\end{align*}

 Next we show \eqref{fin.sym.thm}.
 For the former statement, i.e., to descend the $H$-action to $\mathcal{B}|_{\Delta^*}$,
 it is enough to
 show that each
 $h\in H$ sends  any fiber of $f_t$ for $t\neq 0$
 to one of its fibers.
 Note that they are written in the form
 $$\overline{\mathbb{R}^g+\sqrt{-1}(c_1,\cdots,c_g)}
 \subset \mathbb{C}/(E\ZZ^g \oplus \Omega(t)\ZZ^g).$$
 Therefore the claim follows
 Lemma \ref{fin.sym.lem} as
 the coefficients of the
 linear part of $h$-action are real numbers
 (actually integers).

 When the $H$-action preserves the isomorphic class of
 $\mathcal{L}_t$ for each $t\neq 0$,
 $H$ is regarded as a subgroup of the automorphism group
 of
 polarized abelian variety of
 $(\mathcal{X}_t,\mathcal{L}_t)$ for a fixed
 $t\neq 0$,
 which is well-known to be finite.
 Hence, $H$ itself is finite.
 Thus, combined with the former statements of
  \eqref{fin.sym.thm}, $f/H$ exists.

 The remained proof of \eqref{fin.sym.thm2} is
 immediate from the definition of
 special Lagrangian fibrations.
 \end{proof}
 \begin{Rem}
 In particular, the conjectural asymptotic formula of Ricci-flat K\"ahler forms by \cite[(11) also c.f., \S 4.2, \S 4.5]{YangLi}
 holds even globally in this abelian varieties case.
 \end{Rem}

\begin{Rem}
Here we confirm that
the special Lagrangian fibrations we constructed in Theorem \ref{SLAG.OO} and
Theorem \ref{SLAG.ppav}
are compatible. That is, for the case of principally polarized abelian surfaces which
are common to both setup, the maps $f_t$ in Theorem \ref{SLAG.ppav}
are examples of that of Theorem \ref{SLAG.OO} although the latter are not completely unique
(due to the small ambiguity of applying Siegel reduction as in \cite[\S 4, (4.11)]{OO}), to be precise.

Indeed, if we put $(y'_1,y'_2):=(\beta_{i,j})_{1\le i,j\le 2}\cdot ^{t}(y_1,y_2)$, we have
\begin{align}
{\rm Re}(\Omega_t)&=dx_1\wedge dx_2- ({\rm det}(\beta_{i,j}))^{-1}dy'_1\wedge dy'_2,\\
{\rm Im}(\Omega_t)&=dx_1\wedge (\beta'_{2,1} dy'_1+\beta'_{2,2}dy'_2)
- dx_2\wedge (\beta'_{1,1} dy'_1+\beta'_{1,2}dy'_2).
\end{align}
Topologically all the fibers of $f_t$ (of Theorem \ref{SLAG.ppav})
are identified with the $4$-torus $T$ with coordinates $x_1, x_2, y'_1, y'_2$.
In $H^2(T,\R)$, if we put
\begin{align*}
e&:=[dx_1\wedge dx_2], \\
f&:=[dy'_1\wedge dy'_2] \text{ and  }\\
v&:=[dx_1\wedge (B'_{2,1}dy'_1+B'_{2,2}dy'_2)
-dx_2\wedge (B'_{1,1}dy'_1+B'_{1,2}dy'_2)],
\end{align*}
where $(B_{i,j}:={\rm val}_t p_{i,j}(t))$,
$(B'_{i,j})_{i,j}:=(B_{i,j})^{-1}_{i,j}$ as a matrix,
the above gives
\begin{align}
\label{7} \log|t|\cdot {\rm Re}(\Omega_t)&=\log|t| e-O\biggl(\frac{1}{\log|t|}\biggr)f \\
\label{8} \log|t|\cdot {\rm Im}(\Omega_t)&=(v+o(1)),
\end{align}
for $t\to 0$. As these \eqref{7}, \eqref{8} fit to the reduction condition \cite[p.47, (4.11) and l.10-12]{OO}
(set $N_i:=\log|t|, \epsilon_i:=\frac{1}{N_i}$ in the notation of {\it loc.cit}),
it means the desired compatibility with the construction of $f_t$ in Theorem \ref{SLAG.OO}.
\end{Rem}



\section{Hybrid SYZ fibration and Kontsevich-Soibelman conjecture}\label{hyb.sec}

As briefly discussed in the introduction, this section discusses to glue two kinds of fibrations in originally very different natures:
\begin{itemize}
\item \label{C.SYZ}
special Lagrangian fibrations (SYZ fibrations) and
\item \label{NA.SYZ}
non-archimedean SYZ fibration,
\end{itemize}
which is a sort of enhanced answer to \cite[Conjecture 3]{KS}. Indeed, we use it to show
the conjecture for certain finite quotients of
abelian varieties, generalizing \cite[\S 5]{Goto}.
For the gluing,
we use the recent technology of hybrid norms originated in \cite{Ber.Manin} and
re-explored in \cite[\S 2]{BJ}, \cite[Appendix]{Od}.
It is also closely related to
earlier
Morgan-Shalen's partial compactification
technique \cite{MS}.

More precisely speaking, \cite{BJ} shows the following by two different constructions i.e.,
as the projective limit of Morgan-Shalen type extension and as a variant of Berkovich analytification.
\begin{Lem}[{\cite[\S 4 and Appendix]{BJ}}]\label{hyb}
For each smooth projective family $\pi^*\colon \X^*\to \Delta^*$ and associated
smooth projective variety $X$ over $\C((t))^{\rm mero}$,
there is a topological space $\pi^{\rm hyb}\colon
\X^{*,{\rm hyb}}\to \Delta$ such that
\begin{enumerate}
\item $\pi^{\rm hyb}|_{\Delta^*}=\pi^*\colon \X^*\to \Delta^*$ with the complex analytic topology,
\item $(\pi^{\rm hyb})^{-1}(0)=X^{\rm an}$, where $X^{\rm an}$ means the Berkovich analytification of $X/\C((t))^{\rm mero}$.
\end{enumerate}
\end{Lem}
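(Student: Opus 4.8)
The plan is to realize $\X^{*,{\rm hyb}}$ as a relative hybrid analytification of $X$ over a hybrid model of the base disk, following the two constructions of \cite{BJ}, and then to read off the two fibers.

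First I would set up the hybrid base $\Delta^{\rm hyb}$: its underlying set is $\Delta$, where a point $s\in\Delta^*$ carries the \emph{archimedean} seminorm $f\mapsto |f(s)|$ on germs of meromorphic functions and the point $0$ carries the \emph{non-archimedean} $t$-adic seminorm $f\mapsto e^{-{\rm val}_t(f)}$. This is realized as (part of) the Berkovich spectrum $\cM(\mathcal{A})$ of a Banach ring $\mathcal{A}$ of meromorphic germs equipped with a \emph{hybrid} norm that combines the sup of the $|f(s)|$ with the $t$-adic norm; the projection to $\Delta$ sends a seminorm to $|t|$. The essential feature is the rescaling of the archimedean part (by a factor of order $1/\log|s|$) which is what forces the archimedean seminorms to degenerate to the non-archimedean one as $s\to 0$.

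Next I would define $\X^{*,{\rm hyb}}$ as the relative analytification of $X$ over $\Delta^{\rm hyb}$: as a set, pairs consisting of a base point together with a bounded multiplicative seminorm on the local rings of $X$ extending the base seminorm, topologized as in Berkovich theory, with $\pi^{\rm hyb}$ induced by $\mathcal{A}\to\Delta$. Equivalently, following the Morgan--Shalen route, one realizes $\X^{*,{\rm hyb}}$ as a projective limit of partial compactifications attached to snc models of $\X^*\to\Delta$; a separate step then checks that the two constructions produce homeomorphic spaces. The fibers are now identified directly: over $s\neq 0$ every extending seminorm is archimedean, hence evaluation at a complex point of $\X_s$, so $(\pi^{\rm hyb})^{-1}(s)=\X_s$ with its complex-analytic topology and these assemble to $\pi^*\colon\X^*\to\Delta^*$, giving (1); over $0$ the extending seminorms are exactly the multiplicative seminorms on the function field of $X$ extending the $t$-adic one, i.e.\ the points of $X^{\rm an}$, giving (2).

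The hard part will be the topology near the central fiber: one must verify that $\pi^{\rm hyb}$ is continuous (and proper) and that the archimedean fibers $\X_s$ converge \emph{continuously} to the Berkovich fiber $X^{\rm an}$ as $s\to 0$, so that the two pieces glue to a genuine Hausdorff, locally compact space rather than a disjoint union. This is precisely where the rescaling built into the hybrid norm is indispensable, and where the comparison between the Morgan--Shalen projective limit and the Berkovich-analytic description of \cite{BJ} does the real work; once continuity across $0$ is in hand, the fiber computations above complete the proof.
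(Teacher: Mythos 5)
Your sketch is correct and follows essentially the same route as the paper, which does not prove this lemma itself but imports it from \cite{BJ}, describing exactly the two constructions you outline (the hybrid/Berkovich analytification over a Banach ring of germs with the rescaled archimedean seminorms degenerating to the $t$-adic one, and the projective limit of Morgan--Shalen type partial compactifications over snc models, together with the comparison between them). Your identification of the fibers and your flagging of the continuity/properness across $t=0$ as the substantive point match the content of \cite[\S 4 and Appendix]{BJ} that the paper relies on.
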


The following is one of our main theorems.
We have not found literature
which conjectured the statements.

\begin{Thm}[Hybrid SYZ fibration]\label{Hybrid SYZ}
For any maximally degenerating
family $(\X|_{\Delta^*},\mathcal{L}|_{\Delta^*})/\Delta^*$
of polarized abelian varieties of dimension $g$
we use the same notation as Theorem \ref{SLAG.ppav}. Then, the following hold.

\begin{enumerate}
\item \label{AV.hybrid}
There is a family of
tropical affine manifolds (real tori)
i.e.,
a topological space
$\mathcal{B}:=\sqcup_{t\in \Delta}\mathcal{B}_t$
with a natural continuous map to
$\Delta=\{t\in \mathbb{C}\mid |t|<1\}$
and continuous family of tropical affine
structures on $\mathcal{B}_t$ such that
for $t\neq 0$, they coincide with
$(\mathcal{B}_t,\nabla_B(t))$ of
Theorem \ref{SLAG.ppav}.

Further, it comes with
a continuous proper map
\begin{align}
f^{\rm hyb}\colon \X^{*, \rm hyb}\to \mathcal{B},
\end{align}
such that
\begin{enumerate}
\item for $t\neq 0$, $f^{\rm hyb}|_{\X_t}=f_t$ i.e., coincides with
the special Lagrangian fibrations
in Theorem \ref{SLAG.ppav}.
\item $f^{\rm hyb}|_{t=0}=f^{\rm hyb}|_{X^{\rm an}}$ is the non-archimedean SYZ fibration appeared in \cite{KS, NXY, Goto}.
(In \cite[\S 5]{Goto}, it was denoted by $\rho_{\mathcal{P}}$.)
\end{enumerate}
We call this map
$f^{\rm hyb}$ a {\it hybrid SYZ fibration}.

\item \label{finite.hybrid}
If a  group $H$  acts holomorphically in a fiber-preserving manner on
 $\X|_{\Delta^*}$, which fixes $c_1(\mathcal{L}_t)$
 for $t\neq 0$,
 it induces continuous actions on
 both $\X^{*, \rm hyb}$ and $\mathcal{B}$.
 (See Theorem \ref{hybrid equivariant} for more details.)
 Furthermore,
$f^{\rm hyb}$ is $H$-equivariant with respect to the induced actions of $H$.
If the group $H$ also fixes $\mathcal{L}_t$,
 then $H$ must be a finite group and it descends to the quotient $\X^{*, \rm hyb}/H \to \mathcal{B}/H$,
which we denote as
$f^{\rm hyb}/H$.

\item \label{quotientasSYZ}
Under the setup of above
\eqref{finite.hybrid},
we further assume that
$H$-action acts trivially on
$\omega_{\mathcal{X}^*/\Delta^*}$
so that $\mathcal{X}^*/H\to
\Delta^*$ is again relatively
K-trivial, and the homomorphism $\iota : H \to {\rm GL}(g,\ZZ)$  appeared in Lemma \ref{fin.sym.lem} is injective.
Then,
$f^{\rm hyb}/H\colon
\X^{*, \rm hyb}/H \to \mathcal{B}/H$ is again fiberwise
special Lagrangian fibrations for $t\neq 0$ and
non-archimedean SYZ fibration for $t=0$
in a certain generalized sense than
\cite{KS, NXY}. (We clarify the precise
meaning of the generalized version
of non-archimedean SYZ fibration over
$t=0$ during the
proof. )
\end{enumerate}

\end{Thm}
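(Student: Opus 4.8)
The plan is to construct the hybrid fibration $f^{\rm hyb}$ by gluing the complex-analytic special Lagrangian fibrations $f_t$ (for $t\neq 0$) from Theorem \ref{SLAG.ppav} to the non-archimedean retraction over $t=0$, working through the explicit uniformization $\X^* = \sqcup_t (\C^*)^g/\langle p_{i,j}(t)\rangle$ of Lemma \ref{FC}. The natural common target is the fixed real torus $\R^g/\ZZ^g$, because the base $\mathcal{B}_t = \R^g/({\rm Im}\,\Omega(t))\ZZ^g$ of Theorem \ref{SLAG.ppav} can be rescaled by the monodromy-invariant lattice; the correct normalization is suggested by part \eqref{B.ppav}, where $\nabla_B(t)$ has integral points $\frac{-\log|t|}{2\pi}\ZZ^g$. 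Concretely, I would first define $\mathcal{B}_t$ for $t\neq 0$ as the base tori equipped with $\nabla_B(t)$, and set $\mathcal{B}_0$ to be the essential skeleton $\Sk(X)$, which for a maximally degenerating abelian variety is known (see \cite{KS, NXY, Goto}) to be a real torus $\R^g/B'\ZZ^g$ with $B' = ({\rm val}_t\,q_{i,j})$. The key point is that as $t\to 0$ the rescaled tropical affine structure $\nabla_B(t)$ limits to the integral affine structure on $\Sk(X)$, since ${\rm Im}\,\Omega'(t) \sim \frac{-\log|t|}{2\pi}B'$ by the Faltings-Chai asymptotics recalled in Lemma \ref{fin.sym.lem}.

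Next I would write down $f^{\rm hyb}$ fiber by fiber in the coordinates $Z_i = e^{2\pi\sqrt{-1}z_i/e_i}$. For $t\neq 0$ the map is $f_t(Z) = ({\rm Im}\,z_i)_i$ rescaled by $\frac{-2\pi}{\log|t|}$, i.e.\ the tropicalization $Z_i \mapsto \frac{-2\pi}{\log|t|}\log|Z_i|$ up to the factor $e_i$. On the central fiber $X^{\rm an}$, the non-archimedean SYZ fibration $\rho_{\mathcal{P}}$ of \cite[\S5]{Goto} is exactly the tropicalization sending a semivaluation to $(v(Z_i))_i$ in $\Sk(X)$. The decisive step is continuity of the glued map across $t=0$ in the hybrid topology of Lemma \ref{hyb}: I would use that the hybrid topology is governed precisely by the limiting behaviour of $\frac{-1}{\log|t|}\log|\cdot|$ of regular functions, so that the complex tropicalizations $\frac{-2\pi}{\log|t|}\log|Z_i(x_t)|$ converge to the valuation $2\pi\,v_{x_0}(Z_i)$ of the limiting semivaluation $x_0 \in X^{\rm an}$. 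This matches the two constructions of $\X^{*,\rm hyb}$ in \cite{BJ} (Morgan-Shalen limit and Berkovich analytification) because both encode exactly this asymptotic of $\log|\cdot|/\log|t|$. Properness follows since all fibers (tori and their skeleta) are compact and $\mathcal{B}\to\Delta$ is proper.

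For part \eqref{quotientasSYZ} the plan is to descend everything through the finite group $H$. By part \eqref{finite.hybrid} the $H$-action on $\X^{*,\rm hyb}$ and on $\mathcal{B}$ is continuous and $f^{\rm hyb}$ is equivariant, so the quotient $f^{\rm hyb}/H \colon \X^{*,\rm hyb}/H \to \mathcal{B}/H$ exists and is continuous and proper. For $t\neq 0$, Theorem \ref{SLAG.ppav}\eqref{fin.sym.thm2} already identifies the restriction with a special Lagrangian fibration on $\X_t/H$ (using that $H$ acts trivially on $\omega_{\X^*/\Delta^*}$). For $t=0$ I would argue that the restriction $(f^{\rm hyb}/H)|_{t=0}$ is the induced retraction $(X/H)^{\rm an} \to \Sk(X)/l(H)$; injectivity of $\iota=l$ from Lemma \ref{fin.sym.lem} ensures $H$ acts faithfully by integral affine transformations on $\Sk(X)=\R^g/B'\ZZ^g$, so the quotient is a genuine tropical affine orbifold/manifold with singularities. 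The main obstacle, which I would address carefully in the text, is pinning down the precise generalized sense in which this central-fiber map is a ``non-archimedean SYZ fibration'': because $X/H$ need not be an abelian variety and $H$ may have fixed points on $\Sk(X)$, the target $\Sk(X)/l(H)$ acquires orbifold singularities in codimension $\geq 1$, so the statement must be formulated for the essential skeleton of the quotient with its descended integral affine structure rather than the literal setup of \cite{KS, NXY}. I expect the bulk of the verification — the continuity of the gluing at $t=0$ and the compatibility of the two skeletal limits — to be the genuinely hard part, while the descent by the finite group $H$ is formal once equivariance is in hand.
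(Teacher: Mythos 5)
Your treatment of part \eqref{AV.hybrid} follows essentially the same route as the paper: both reduce the continuity of the glued map at $t=0$ to the asymptotic statement that $\tfrac{-1}{\log|t_k|}\log|Z_i(P_k)|\to c_i$ characterizes convergence in the hybrid (Morgan--Shalen) topology, and both identify the central-fibre map with the retraction $\rho_{\mathcal{P}}$ onto the skeleton $\R^g/B'\ZZ^g$. The paper is slightly more careful than you are on two technical points: it works through the Morgan--Shalen partial compactification $\X^{*,\rm hyb}(\X)$ attached to a specific Mumford--K\"unnemann SNC model and then composes with the Berkovich retraction $\X^{*,\rm hyb}\to\X^{*,\rm hyb}(\X)$, and it deduces properness of $f^{\rm hyb}$ from properness of each $\X^{*,\rm hyb}(\X)\to\Delta$ plus the projective-limit description (Tychonoff), rather than from compactness of fibres alone. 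These are refinements of your plan, not a different method.

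The genuine gap is part \eqref{finite.hybrid}: you use it as an input to part \eqref{quotientasSYZ} (``By part \eqref{finite.hybrid} the $H$-action \dots is continuous and $f^{\rm hyb}$ is equivariant'') but never prove it, and it is not formal. Two things must be supplied. First, to define the $H$-action on $\mathcal{B}_0$ compatibly with $f^{\rm hyb}|_{t=0}$ one needs an $H$-\emph{equivariant} SNC minimal model; the paper invokes K\"unnemann's equivariant construction (\cite[3.5, esp.\ (v)]{Kun}) for this. Second, continuity of the resulting $H$-action on the total space $\mathcal{B}=\sqcup_t\mathcal{B}_t$ \emph{across} $t=0$ is not automatic from continuity fibrewise; the paper proves it by contradiction, lifting a putative discontinuous sequence $x_i\in\mathcal{B}$ to $\tilde{x}_i\in\X^{*,\rm hyb}(\X)$ and extracting a convergent subsequence using the properness of $f^{\rm hyb}$ established in part \eqref{AV.hybrid} --- this is exactly where the properness statement earns its keep, and your sketch has no substitute for it. Relatedly, in part \eqref{quotientasSYZ} the ``generalized sense'' of the non-archimedean SYZ fibration is not merely an orbifold caveat: the paper makes it precise by observing that the quotient $\mathcal{X}/H$ of the equivariant K\"unnemann model is again a relative minimal model (though possibly not dlt), and defines the central-fibre retraction $X^{\rm an}/H\to\mathcal{B}_0/H$ from that model as in \cite[A.9]{Od}; your proposal gestures at the right target but does not produce the model whose existence makes the definition work.
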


\begin{proof}
First we prove \eqref{AV.hybrid}.
Recall from Lemma \ref{hyb} and \cite{BJ} that
$\X^{*,\rm hyb}$ is the projective limit of Morgan-Shalen type spaces
$\X^{*,\rm hyb}(\X):= \X^*\sqcup \mathcal{D}(\X_0)$ with the hybrid topology of \cite[\S 2]{BJ},
where $\X^*\subset \X\to \Delta$ runs over SNC models and $\mathcal{D}(\X_0)$ denotes the dual intersection complex of the  central fiber
of $\mathcal{X}$.

Here we take the Mumford construction by \cite{Mum72, Kun} (also cf., \cite{Goto, MN}) and
construct a smooth minimal
model $\X^*\subset \X$
which is projective over $\Delta$ and
the central fiber $\X_0$ is simple normal
crossing. Then,
recall from \cite[\S 2]{BJ}
that the
Morgan-Shalen type partial compactification
$(\X^*\subset) \X^{*,\rm hyb}(\X)$
satisfies
\begin{itemize}
    \item $\pi^{\rm hyb}$ extends to a continuous proper map
    $\overline{\pi^{\rm hyb}}\colon \X^{*,\rm hyb}(\X)\to \Delta$,
    \item $(\overline{\pi^{\rm hyb}})^{-1}(0)$
    is the dual complex of $\X_0$.
\end{itemize}
See \cite[\S 2]{BJ} (also cf., original
\cite{MS}) for the details including
the description of the topology.
Below, we will show that there is a map
$f(\X)\colon \X^{*,\rm hyb}(\X)\to \mathcal{B}$ such that
the composite of $f(\X)$ together with the natural retraction $\X^{*,\rm hyb}\to \X^{*,\rm hyb}(\X)$ (\cite{Ber99},
see
also \cite{KS, BJ})
gives the desired $f^{\rm hyb}$.

We take an arbitrary sequence
$$P_k=(t_k, {(Z_i)_k}) \in \Delta^*\times (\C^*)^g \hspace{2mm}
(k=1,2,\cdots)$$
which satisfies that for each $i\in \{1,\cdots,g\}$, there is a real constant $c_i$ such that
\begin{align}
\label{ass}
\dfrac{\log|(Z_i)_k|}{\log|t_k|}\to c_i
\end{align}
for $k\to \infty$.
This condition means that, the points $P_k \hspace{2mm} (k=1,2,\cdots)$ converge to $(c_1,\cdots,c_g)\in \mathbb{R}^g$
modulo
$(B_{i,j})\ZZ^g$
in the Morgan-Shalen type partial compactification
$\X^{*,\rm hyb}(\X)$.
By the definition of
the topology put on the Morgan-Shalen
type compactification
(cf., \cite{MS}, \cite[\S 2]{BJ}),
to prove the desired assertion, it is enough to show that
$f_{t_k}(P_k)$ converges to the same
$\overline{(c_1,\cdots,c_g)}
\in \mathcal{B}_0\subset \mathcal{B}$ for $k\to \infty$.

So we consider the sequence $\{f_{t_k}(P_k)\}\in \mathcal{B}$.
By Theorem \ref{SLAG.ppav} \eqref{A.ppav} and \eqref{B.ppav},
it is represented by
\begin{align}
{\rm Im}\biggl(\frac{\log(Z_i)_k}{2\pi \sqrt{-1}}\biggr) \text{ mod. }(\beta_{i,j}(t_k))_{1\le i,j\le g}\ZZ^g,
\end{align}
hence as a point in $\mathcal{B}_{t_k}\subset \mathcal{B}$,
\begin{align}\label{wts}
f_{t_k}(P_k)=\biggl(\frac{\log(Z_i)_k}{\log|t_k|}\biggr)
\text{ mod. }\biggl(\frac{2\pi \beta_{i,j}(t_k)}{-\log|t_k|}\biggr)_{1\le i,j\le g}\ZZ^g.
\end{align}
Since ${\rm Im}\frac{t}{\sqrt{-1}}=-\log|t|$ in general, applying to
\eqref{wts}, $f_{t_k}(P_k)$ converge to $\overline{(c_1,\cdots,c_g)}\in \mathcal{B}_0\subset \mathcal{B}$ for $k\to \infty$
by the assumption \eqref{ass} on the sequence $P_k$.
To finish the proof of
\eqref{AV.hybrid}, we confirm that
the obtained $f^{\rm hyb}$ is proper.
It follows from the fact that
$\X^{*,\rm hyb}(\X)\to \Delta$
for each SNC model $\X$ is always
a proper map,
together with the description of
$\X^{*,\rm hyb}$ as the projective
limit of $\X^{*,\rm hyb}(\X)$ for
a certain projective system of
the model $\X$s
(\cite[Appendix A, Theorem 10]{KS}, \cite[4.12]{BJ})
thanks to the Tychonoff's  theorem.

\vspace{2mm}
Next we prove \eqref{finite.hybrid}.
It
follows from the functoriality of the construction of hybrid analytification (\cite{Ber.Manin})
that
the $H$-action induces
a natural continuous
$H$-action on the whole $\X^{*, \rm hyb}$.
Theorem \ref{SLAG.ppav} \eqref{fin.sym.thm}
states
the existence of continuous
$H$-action
$\mathcal{B}|_{t\neq 0}$
with which
$f^{\rm hyb}|_{t\neq 0}$
is $H$-equivariant, which
we want to extend to the
whole base $\mathcal{B}$.
Recall there is a
$H$-equivariant SNC minimal  model of $\mathcal{X}^*$
by \cite[3.5, esp. (v)]{Kun},
applied over the DVR of
holomorphic germs  $\mathcal{O}_{\mathbb{C},0}^{\rm hol}$
at $0\in \mathbb{C}$.
Hence, there is a natural $H$-action
on $\mathcal{B}_{t=0}$
so that
$f^{\rm hyb}|_{t=0}$ is
$H$-equivariant.
Hence, summing up above,
we have a $H$-action on $\mathcal{B}$ with which
$f^{\rm hyb}$ is
$H$-equivariant.
What remains to
complete the proof of
\eqref{finite.hybrid}
is the continuity of the
$H$-action on $\mathcal{B}$.

Suppose the contrary.
Then there is a sequence
$x_i \hspace{2mm} (i=1,2,\cdots)\in \mathcal{B}$ and $h\in H$
such that while
$\lim_{i\to \infty}x_i$ exists in $\mathcal{B}_0$, but
\begin{align}\label{notcoincide}
\lim_{i\to \infty}h\cdot x_i
\neq h\cdot \left(\lim_{i\to \infty}x_i\right).
\end{align}
We can
lift each $x_i$ to
$\tilde{x_i}\in
\mathcal{X}^{\rm  hyb,*}(\mathcal{X})$
whose image in $\mathcal{B}$
is $x_i$. From the
properness of $f^{\rm hyb}$
proven in \eqref{AV.hybrid}
above
and locally compactness of
$\mathcal{B}$,
we can and do assume that
that
$\lim_{i\to \infty}
\tilde{x_i}$ exists.
Then,
$\lim_{i\to \infty}
h\cdot \tilde{x_i}$
maps down to
$h\cdot
(\lim_{i\to \infty}x_i)$
which contradicts with
\eqref{notcoincide}.
Therefore, we can take the
finite quotient
$f^{\rm hyb}/H\colon
\X^{*, \rm hyb}/H \to \mathcal{B}/H$
in the category of
topological spaces.

\vspace{2mm}
Finally, we prove
\eqref{quotientasSYZ}.
From Theorem \ref{SLAG.ppav}
\eqref{fin.sym.thm2}, the assertions
hold away from $t=0$.
What remains is to interpret
the natural $X^{\rm an}/H
\to \mathcal{B}_0/H$ as a
certain generalized version of
non-archimedean SYZ fibration.
The precise meaning is as follows.
Under the assumption that
the homomorphism $\iota : H \to {\rm GL}(g,\ZZ)$ is injective,
we can take a $H$-equivariant
Kulikov-Kunnemann model $\mathcal{X}$
over $\Delta$ (\cite[3.5 esp., (v)]{Kun}).
Then
$\mathcal{X}/H$ is automatically again a
relative minimal model of $X/H$.
We can and do use this to construct a
retraction of $X^{\rm an}/H$ to
$\mathcal{B}_0/H$ as in \cite[A.9]{Od} although
$(\mathcal{X}/H, (\mathcal{X}/H)_{t=0})$
may not be dlt in general.
See \cite[6.1.9]{BM} for a related result.
\end{proof}

Now, we are ready to
rigorously formulate and
prove
\cite[Conjecture 3]{KS}
for K-trivial
finite quotients
of abelian varieties
of any dimension,
under a slight
assumption.

\begin{Cor}[{\cite[Conjecture 3]{KS}}]
Consider an arbitrary maximally degenerating
family $(\X^*,\mathcal{L}^*)/\Delta^*$
of polarized abelian varieties of dimension $g$ and
a holomorphic action of a  group $H$ (which must be a finite group) on
$\mathcal{X}^*$ in a fiber-preserving manner such that
\begin{itemize}

\item the $H$-action fixes $\mathcal{L}_t$
 for $t\neq 0$,
 \item
the $H$-action acts trivially on
$\omega_{\mathcal{X}^*/\Delta^*}$
so that $\mathcal{X}^*/H\to
\Delta^*$ is again a relatively
K-trivial family,
\item the homomorphism $\iota : H \to {\rm GL}(g,\ZZ)$  appeared in Lemma \ref{fin.sym.lem} is injective.
\end{itemize}

Then the following two tropical
affine manifolds with singularities are isomorphic   (see  Theorem \ref{Hybrid SYZ} for the
precise meaning):
\begin{enumerate}
    \item (complex side) the limit of the base $\mathcal{B}_t/H$
    of special Lagrangian fibrations
    for $t\to 0$ with $\nabla_B(t)$
    \item \label{base2} (non-archimedean side)
    the base $\mathcal{B}_0/H$
    which underlies
    the generalized
    non-archimedean SYZ fibration.
\end{enumerate}
Further, for the latter \eqref{base2}, we further assume that for any $h$ which is not the identity element, the fixed locus of its action on $\mathcal{B}_0$ is $0$-dimensional.
Then,
the fibration
$(X/H)^{\rm an}\to \mathcal{B}_0/H$ above
($f^{\rm hyb}/H|_{t=0}$ in
Theorem \ref{Hybrid SYZ} \eqref{quotientasSYZ})
is a genuine
non-archimedean SYZ fibration in the sense of \cite{KS,NXY}
i.e., coincides with the one associated to a certain dlt minimal model
$(\mathcal{X},\mathcal{X}_0)/\Delta$.
\end{Cor}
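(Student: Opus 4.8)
The plan is to establish the Corollary in two parts, matching its two assertions. For the first assertion, I would simply invoke Theorem \ref{Hybrid SYZ}\eqref{quotientasSYZ}, which already produces a continuous $H$-equivariant map $f^{\rm hyb}/H\colon \X^{*,\rm hyb}/H\to \mathcal{B}/H$. The integral affine structure on $\mathcal{B}_t/H$ for $t\neq 0$ descends from $\nabla_B(t)$, and the central fiber $\mathcal{B}_0/H$ is precisely the base of the generalized non-archimedean SYZ fibration defined in that proof. Since these are two pieces of one and the same topological family $\mathcal{B}/H$ glued by the hybrid topology, and the tropical affine structure on $\mathcal{B}/H$ is constructed to be continuous across $t=0$, the limiting complex-side structure and the non-archimedean-side structure agree by construction. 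Thus the first claim is essentially a restatement of what the hybrid fibration already encodes: the coincidence of the two affine structures is built into the continuity of $\nabla_B$ on $\mathcal{B}$.

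For the second assertion, the content is to upgrade the \emph{generalized} non-archimedean SYZ fibration $(X/H)^{\rm an}\to \mathcal{B}_0/H$ into a \emph{genuine} one in the sense of \cite{KS,NXY}, i.e.\ to identify $\mathcal{B}_0/H$ with the essential skeleton (dual complex) of an honest dlt minimal model of $X/H$. First I would note that, under the added hypothesis that each nonidentity $h\in H$ has $0$-dimensional fixed locus on $\mathcal{B}_0$, the quotient $\mathcal{B}_0/H$ acquires singularities only in codimension $\geq 2$, consistent with the definition of a tropical affine manifold with singularities recalled in the Notation section. Next I would take the $H$-equivariant Kulikov--K\"unnemann model $\mathcal{X}/\Delta$ from \cite[3.5]{Kun} and form the quotient $\mathcal{X}/H$; the point is to verify that, away from the $0$-dimensional bad locus, $(\mathcal{X}/H,(\mathcal{X}/H)_0)$ is actually dlt (not merely a relative minimal model). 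Then the essential skeleton of $(X/H)$ computed via this dlt model coincides with the Berkovich retraction target, which is exactly $\mathcal{B}_0/H$, giving the identification with the genuine non-archimedean SYZ fibration.

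The main obstacle I anticipate is precisely the dlt verification for $\mathcal{X}/H$. The quotient of an snc (or dlt) pair by a finite group acting with fixed points is in general only log canonical, or has quotient singularities that fail to be dlt along the ramification locus; this is why Theorem \ref{Hybrid SYZ}\eqref{quotientasSYZ} was forced to phrase things in the \emph{generalized} sense and only asserts that $\mathcal{X}/H$ is a relative minimal model. The role of the new hypothesis on $0$-dimensional fixed loci is to confine all non-dlt behaviour to codimension $\geq 2$ in $\mathcal{B}_0$, so that after possibly passing to an auxiliary dlt modification (which does not change the essential skeleton, only its simplicial structure) one recovers a genuine dlt model whose dual complex is $\mathcal{B}_0/H$. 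I would handle this by working locally near each fixed point, analyzing the induced $\iota(h)\in \GL(g,\ZZ)$ action on the toric/tropical local model, and appealing to the cited \cite[6.1.9]{BM} for the compatibility of skeleta under such finite quotients. The remaining steps---properness, $H$-equivariance, and the identification of the retraction---are then routine given the already-established Theorem \ref{Hybrid SYZ}.
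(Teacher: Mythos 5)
Your treatment of the first assertion matches the paper's: both simply reduce it to Theorem \ref{Hybrid SYZ} \eqref{finite.hybrid} and \eqref{quotientasSYZ}, noting that the complex-side limit and the non-archimedean base are the two ends of the single continuous family $\mathcal{B}/H$. Nothing further is needed there.

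For the second assertion your route diverges in a way that leaves a genuine gap. The paper does \emph{not} pass to an auxiliary dlt modification: it proves that the quotient $(\mathcal{X}/H,(\mathcal{X}/H)_0)$ of the $H$-equivariant K\"unnemann model is \emph{already} dlt, in two steps that your proposal omits or replaces. First, reducedness of $(\mathcal{X}/H)_0$: no $h\neq e$ fixes a component $E_i$ of $\mathcal{X}_0$ pointwise, because at a general point of $E_i$ one can extend local coordinates to $h$-invariant functions, producing $H$-invariant local coordinates $(x_1,\dots,x_g,t)$ and forcing $h$ to act trivially near that point, hence everywhere. Second, the added hypothesis is used to show that no $h\neq e$ fixes \emph{any} stratum (log canonical center) of $\mathcal{X}_0$ pointwise: if it did, the corresponding face of the dual complex $\mathcal{B}_0$ would be fixed by $h$, contradicting the $0$-dimensionality of its fixed locus. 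Consequently the generic point of every stratum has trivial stabilizer, the quotient pair is snc at the generic point of each of its lc centers, and it is dlt outright. Your reading --- that the hypothesis only ``confines non-dlt behaviour to codimension $\geq 2$'' so that one may pass to a dlt modification --- both misses this mechanism and creates a new problem: the non-archimedean SYZ fibration in the sense of \cite{KS,NXY} is the Berkovich retraction attached to a \emph{specific} dlt minimal model, and the retraction depends on the model, not merely on the PL type of its skeleton. After replacing $\mathcal{X}/H$ by a dlt modification you would still have to prove that the new model's retraction coincides with the already-constructed $f^{\rm hyb}/H|_{t=0}$, which your proposal does not address. The direct dlt verification of the paper avoids this issue entirely, and is exactly where the injectivity of $\iota$ and the fixed-locus hypothesis earn their keep.
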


\begin{proof}
Note that the former
affine manifold with singularities means $\mathcal{B}_t/H$
(for $t\neq 0$, or its limit for $t\to 0$)
and the latter means the $\mathcal{B}_0/H$
of Theorem \ref{Hybrid SYZ}.
Hence, we can apply Theorem \ref{Hybrid SYZ} \eqref{finite.hybrid}
and \eqref{quotientasSYZ} and their proofs. The only
remained claim is to show is the last paragraph of the statements, which amounts to prove
that for the $H$-equivariant
Mumford-Kunnemann SNC model $\mathcal{X}$,
$(\mathcal{X}/H,(\mathcal{X}/H)_0)$ is dlt.
We prove it here.

We write the irreducible decomposition of
$\mathcal{X}_0$ as $\cup_i E_i$. We want to show that
for any $h$ which is not the identity $e$,
$h$ does not fix any $E_i$ pointwise.
Suppose the contrary and take a general point of
$x\in E_i$. $\mathcal{X}$ is smooth over $\Delta$
at $x$. We take a local coordinates
$(x_1,\cdots,x_g)$ of $x\in E$ which we extend to $h$-invariant holomorphic functions
around
$x\in \X$. Then
$(x_1,\cdots,x_g,t)$ is a $H$-invariant
local coordinates of $x\in \X$, which contradicts with
nontriviality of $h$.
Hence $(\mathcal{X}/H)_0$ is reduced.

Suppose there is $h(\neq e)\in H$
which preserves a strata $Z$ of $\mathcal{X}_0$
(a log canonical center of
$(\mathcal{X},\mathcal{X}_0)$) pointwise. Then
the strata of the dual complex $\mathcal{B}_0$
which corresponds to $Z$ is fixed by $h$, hence
contradicts with our last assumption.
Therefore, the quotient
$(\mathcal{X}/H,(\mathcal{X}/H)_0)$ is dlt
(which partially extends a result by Overkamp
in the Kummer surfaces case
cf., \cite[\S 2, \S 3]{Over}).
We complete the proof.
\end{proof}

\begin{Rem}
The complementary
``prediction"s in
\cite[Conjecture 3]{KS}
are the existence of
some ``natural"
interpretation of
the non-archimedean SYZ fibration via Gromov-Hausdorff limit,
together with coincidence of critical  locus.
Note that original
claim (the way to define $\pi_{mer}$)
itself  was not
rigorous and the
formulation itself was a nontrivial problem.
Nevertheless,
our construction and
study of
$\mathcal{X}^{*,hyb}(\X)$
 and identification of its central fiber with the Gromov-Hausdorff limit of $\mathcal{X}_t$s in the proof of
Theorem \ref{Hybrid SYZ}
rigorously  formulated
their predictions and,
at the same time,
affirmatively proven
them.
\end{Rem}

\begin{Rem}[K3 surfaces case]
It is harder to consider the same problem for polarized K3 surfaces and currently we are not sure if it works.
Indeed, a subtletly exists here, as pointed out to the first author by V.Alexeev in April, 2019. That is,
in the construction of $\mathcal{B}_0$ in \cite{OO}, the appearing singularities of affine structures are always of Kodaira type i.e.,
which underlies singular fibers of minimal elliptic surfaces as well-known classical classification by Kodaira. On the other hand,
the essential skeleta \cite{AET} constructed as dual complexes of Kulikov degenerations of degree $2$ polarized K3 surfaces,
are not necessarily of Kodaira type. Hence, careful choice of Kulikov degenerations seems necessary
(even if the analogous hybrid SYZ  fibrations exist).
\end{Rem}



\section{Degenerating abelian varieties
and crystallographic groups}
\label{sec.sym}
In this section, we reveal, in an explicit manner, the relation
between the automorphism group of  families of abelian varieties and the
automorphism group of their Gromov-Hausdorff
collapses, which can be regarded as a
crystallographic group.

\begin{Setup}(after notation \S \ref{degav.notation}) \label{4.1}
At first, we consider the the family of
abelian varieties with polarizations of the
type $E={\rm diag}(e_1,\cdots,e_g)$,
following
Lemma \ref{fin.sym.lem},
i.e., $$\X^*=((\C^*)^g \times \Delta^*) /\langle p_{i,j}(t) \rangle_{1\le i,j\le g}^{\ZZ^g} \to \Delta^*,$$
for $p_{i,j}(t)
=q_{i,j}(t)^{e_i}$
where $q_{i,j}(t)
\in \C((t))^{\rm mero}$.
Recall
that the fibers are

$$\xymatrix{
\C^g/(E\ZZ^g \oplus \Omega (t)\ZZ^g)
\ar@{}[d]|{\rotatebox{90}{$\in$}}
\ar[r]^-\simeq & (\C^*)^g  /\langle p_{i,j}(t) \rangle_{1\le i,j\le g}^{\ZZ^g}=\X_t
\ar@{}[d]|{\rotatebox{90}{$\in$}}\\
\left(\overline{Z_i}\right)_{1\le i \le g} \ar@{|->}[r]&\left(\overline{\exp(2\pi \sqrt{-1}e_i^{-1} Z_i)}\right)_{1\le i \le g}
}
$$ where
$\Omega (t)=\left( \frac{1}{2\pi \sqrt{-1}} \log q_{i,j}(t)\right)_{i,j} \in \mathbb{H}_g$
and
$\{Z_i\}$ is the standard basis of $\C^g$.
Of course, $\Omega (t)$ has the ambiguity
caused by the monodromy effect. However, the lattice $(E\  \Omega (t))\ZZ^{2g}\subset \C^g$ is well-defined.
In addition, the imaginary part $${\rm Im} (\Omega (t))=\left(\frac{-1}{2\pi} \log |q_{i,j}(t)|\right)_{1\le i,j\le g}$$
is also well-defined (see the proof of Lemma
\ref{FC}).
\end{Setup}
In the following way,
we re-describe
the matrix
$B_{i,j}=({\rm val}_t q_{i,j}(t))_{i,j}\in {\rm Mat}_{g\times g}(\ZZ)$,  which appeared in  \cite{FC, Goto, Od}
(in the principally polarized case)
and the proof of Theorem
\ref{SLAG.ppav}.
\begin{Lem}\label{omega vs B}
Under the above setup \ref{4.1},
the following holds:
$$(B_{i,j})_{1\le i,j\le g}=\lim_{t\to 0} \frac{-2\pi}{\log |t|} {\rm Im} (\Omega (t))$$
\end{Lem}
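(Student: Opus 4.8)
The plan is to reduce the matrix identity to an entrywise limit and then to a single elementary fact about the $t$-adic valuation of a meromorphic germ.

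First I would make the left-hand expression explicit. Writing $\log q_{i,j}(t) = \log|q_{i,j}(t)| + \sqrt{-1}\arg q_{i,j}(t)$, one reads off
\[
{\rm Im}(\Omega (t)) = \left( \frac{-1}{2\pi}\log|q_{i,j}(t)| \right)_{1\le i,j\le g},
\]
exactly as recorded in Setup \ref{4.1}; note this is monodromy-insensitive because only the well-defined quantity $|q_{i,j}(t)|$ enters. Multiplying by $\frac{-2\pi}{\log|t|}$ then gives the clean entrywise form
\[
\frac{-2\pi}{\log|t|}\,{\rm Im}(\Omega (t)) = \left( \frac{\log|q_{i,j}(t)|}{\log|t|} \right)_{1\le i,j\le g},
\]
since the two minus signs and the factor $2\pi$ cancel. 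It therefore suffices to prove, for each fixed pair $(i,j)$, that $\lim_{t\to 0} \frac{\log|q_{i,j}(t)|}{\log|t|} = {\rm val}_t\, q_{i,j}(t) = B_{i,j}$.

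Next I would invoke the structure of $q_{i,j}$ supplied by Lemma \ref{FC}: each $q_{i,j} \in \C((t))^{\rm mero}$ is a meromorphic germ with possible pole only at the origin, so it admits a factorization $q_{i,j}(t) = t^{B_{i,j}} u_{i,j}(t)$, where $B_{i,j} = {\rm val}_t\, q_{i,j}(t)$ and $u_{i,j}$ is holomorphic with $u_{i,j}(0) \neq 0$. Taking $\log|\cdot|$ yields $\log|q_{i,j}(t)| = B_{i,j}\log|t| + \log|u_{i,j}(t)|$, whence
\[
\frac{\log|q_{i,j}(t)|}{\log|t|} = B_{i,j} + \frac{\log|u_{i,j}(t)|}{\log|t|}.
\]
Since $u_{i,j}(t) \to u_{i,j}(0)$ is a finite nonzero limit while $\log|t| \to -\infty$ as $t\to 0$, the numerator of the second term stays bounded and that term tends to $0$; the desired limit follows (and the sign of $B_{i,j}$, whether $q_{i,j}$ vanishes or has a pole, is handled uniformly).

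There is essentially no obstacle here; the only point requiring any care is the legitimacy of the factorization $q_{i,j}(t) = t^{B_{i,j}}u_{i,j}(t)$, which rests on $q_{i,j}$ being a genuine meromorphic (Laurent) germ at $0$ rather than merely holomorphic on $\Delta^*$. This is precisely what conditions (i) and (ii) of Lemma \ref{FC} guarantee, so the argument is complete. It is worth remarking that this lemma identifies the integer matrix $B=(B_{i,j})$ appearing here with the one used in the proof of Theorem \ref{SLAG.ppav} (there written $B'_{i,j}={\rm val}_t q_{i,j}(t)$), thereby tying the limit of the rescaled imaginary period to the Gram matrix of the Gromov--Hausdorff collapse.
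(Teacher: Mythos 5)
Your proposal is correct and follows essentially the same route as the paper, which simply asserts that the claim follows from $\lim_{t\to 0}\frac{\log|q_{i,j}(t)|}{\log|t|}={\rm val}_t\,q_{i,j}(t)$; you merely supply the elementary factorization $q_{i,j}(t)=t^{B_{i,j}}u_{i,j}(t)$ that justifies this limit. No issues.
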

\begin{proof}
The assertion simply follows from $\lim_{t\to 0} \frac{\log |q_{i,j}(t)|}{\log |t|} = {\rm val}_t q_{i,j}(t).$
\end{proof}

Below, we often denote $B_{i,j}$ simply as $B$. Now we define and study the following two  automorphism groups of $X$ in our context.
\begin{Def}
Under the above setup \ref{4.1},
\begin{enumerate}
    \item

$\Aut (X,c_1(L))$  consists of automorphisms of $X$ which preserve $c_1(L)$.
In other words, $f\in \Aut (X,c_1(L))$ preserves the flat K\"ahler metric
on each $\X_t$
with its K\"ahler class
$c_1(\cL_t)$.

   \item
$\Aut (X,L)$  consists of automorphisms of $X$ which preserve $L$.
In other words, $f\in \Aut (X,L)$ preserves
isomorphic class of the
the principal polarization $\cL_t$ on each $\X_t$.
By definition,
$\Aut (X,L)$ is a subgroup of $\Aut (X,c_1(L))$. It is also a well-known fact
that any element of
$\Aut (X,c_1(L))$ extends to
automorphism of $\mathcal{X}^*$.
\end{enumerate}
\end{Def}
Now we refine Lemma \ref{fin.sym.lem}.

\begin{Lem}\label{projection}
Under the above setup \ref{4.1},
the image of
$$l\colon
\Aut (X,c_1(L))\to {\rm GL}(g,\ZZ)$$
in Lemma \ref{fin.sym.lem} lies in
${\rm GL}(g,\ZZ) \cap {\rm O}(g,B^{-1}),$ where
${\rm O}(g,B^{-1}) :=\{M\in {\rm GL}_g(\R) \ | \ {}^t M B^{-1} M = B^{-1} \}. $
\end{Lem}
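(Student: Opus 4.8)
The plan is to use that every $h\in\Aut(X,c_1(L))$ restricts, on each fiber $\X_t$ with $t\neq 0$, to an \emph{isometry} of the flat K\"ahler metric $g_{\rm KE}(\X_t)$: this is exactly the content of fixing the class $c_1(\cL_t)$, whose associated flat metric is unique. By Lemma \ref{fin.sym.lem} the induced map $h_*$ on $H_1(\X_t,\ZZ)=\Lambda_t$ preserves the first summand of the splitting \eqref{split} and restricts there to $l_h=l(h)\in\GL(g,\ZZ)$; this already yields the integrality half of the assertion, so the only thing left is the orthogonality ${}^t l_h\,B^{-1}\,l_h=B^{-1}$. Since $h_*$ is an isometry and preserves this summand, $l_h$ is automatically an isometry of the restriction of $g_{\rm KE}(\X_t)$ to the real span of that summand; hence the whole problem reduces to computing this restricted metric and controlling its limit as $t\to 0$.

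For the computation I would read off the restricted metric from the Gram matrix already displayed in the proof of Lemma \ref{fin.sym.lem}, which is assembled from $E$ and $Y(t):={\rm Im}\,\Omega'(t)=(\beta'_{i,j}(t))$. The point is that this restricted metric \emph{degenerates} as $t\to 0$ (the summand consists of vanishing cycles), so one rescales by $\frac{-2\pi}{\log|t|}$ before passing to the limit; by Lemma \ref{omega vs B} (equivalently $Y(t)\sim\frac{-\log|t|}{2\pi}B$) the rescaled restricted metric converges to the form determined by $B$. Because the isometry identity ${}^t l_h\,G_t\,l_h=G_t$ for the restricted Gram matrix $G_t$ is unchanged under scaling $G_t$ by a positive constant, it survives in the limit and produces the quadratic-form identity ${}^t l_h\,B^{-1}\,l_h=B^{-1}$, i.e. $l_h\in{\rm O}(g,B^{-1})$. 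Together with $l_h\in\GL(g,\ZZ)$ this gives $l(h)\in\GL(g,\ZZ)\cap{\rm O}(g,B^{-1})$. An equivalent and perhaps cleaner route is to work on the base side: the chosen summand is Lagrangian for the polarization form $\widetilde E$, so $h_*\in{\rm Sp}(\widetilde E,\ZZ)$ forces the action on the complementary ``long'' cycles to be the contragredient of $l_h$, and those cycles carry the metric $\sim\frac{-\log|t|}{2\pi}B$ (this is the Gromov--Hausdorff limit torus of Theorem \ref{SLAG.ppav}); the isometry condition there reproduces the same relation.

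The step I expect to be the main obstacle is getting the normalization exactly right in this limit. Concretely, one must (i) rescale the degenerating metric by the correct power of $\frac{-2\pi}{\log|t|}$, and (ii) keep scrupulous track of the polarization matrix $E$ and of the basis in which $l_h$ is written, since conjugating the quadratic form by $E$ alters ${\rm O}(g,B^{-1})$: in the principally polarized case ($E=I$) the limiting form is literally $B^{-1}$, whereas for a general type $E$ the fiber restriction and the $\widetilde E$-pairing each insert factors of $E$ that must be reconciled. Checking that the fiber computation and the dual base computation agree is the natural consistency test that pins down the normalization and completes the argument.
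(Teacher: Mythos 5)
Your proposal is correct and follows essentially the same route as the paper's proof: extract the $t$-independent matrix $M=l(f)\in{\rm GL}(g,\ZZ)$ from Lemma \ref{fin.sym.lem}, impose the isometry identity ${}^tM({\rm Im}\,\Omega(t))^{-1}M=({\rm Im}\,\Omega(t))^{-1}$ fiber by fiber, and pass to the limit $t\to 0$ via Lemma \ref{omega vs B}, using that the identity is invariant under positive rescaling of the Gram matrix. The $E$-normalization you flag as the main obstacle is a genuine subtlety, but the paper handles it only implicitly, by working with the Hermitian matrix $({\rm Im}\,\Omega(t))^{-1}$ directly in the coordinates of Setup \ref{4.1} rather than with the restricted Gram matrix $E\,Y(t)^{-1}E$ that you propose to use.
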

\begin{proof}
Note that $f\in \Aut (X,c_1(L))$ induces an automorphism of
$\X^*$ over $\Delta^*$,
possibly after shrinking the radius of
$\Delta^*$ by rescale,
which we denote by the same letter
$f\in \Aut (\X^*)$.
The restriction $f_t:=f|_{\X_t}$ gives
an automorphism of $(\X_t, c_1(\cL_t))$.
By the argument of Lemma \ref{fin.sym.lem},
the automorphism  $f_t$ of $\X_t=\C^g/(E\  \Omega (t))\ZZ^{2g}$ is induced from
a linear transformation $M\in {\rm GL}(g,\ZZ)$
which is independ of $t\in \Delta^*$.
Consider the principally polarized abelian variety $(\X_t,\cL_t)$ defined by $\C^g/(E\  \Omega (t))\ZZ^{2g}$.
The metric on $\X_t$ induced by $c_1(\cL_t)$ is
given by the Hermite matrix $({\rm Im} \Omega (t))^{-1}$ on $\C^g$.
Since
the automorphism $f_t$ of $\X_t=\C^g/(E\  \Omega(t))\ZZ^{2g}$ preserving the metric $c_1(\cL_t)$,
the corresponding $M\in {\rm GL}(g,\ZZ)$ satisfies the equation ${}^tM({\rm Im} \Omega (t))^{-1}M=({\rm Im} \Omega (t))^{-1}$
for any $t\in \Delta^*$.
By Proposition \ref{omega vs B}, the above equation gives the equation ${}^t M B^{-1} M = B^{-1}$.
Since such $M=l(f)$, we complete the proof.
\end{proof}
\begin{Cor}\label{no translation preserving pp}
Under the same setting as Lemma \ref{projection},
when $e_1=\cdots=e_g=1$ i.e.,
principally polarized case,
the restriction $l|_{\Aut (X,L)}$ is injective.
\end{Cor}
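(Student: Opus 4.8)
The plan is to identify the kernel of $l|_{\Aut(X,L)}$ with the translations preserving the principal polarization, and then to invoke the vanishing of the corresponding finite group in the principal case. Concretely, suppose $f\in \Aut(X,L)$ satisfies $l(f)=\mathrm{id}$; I want to deduce $f=\mathrm{id}$. I would work on a single fiber $\X_t=\C^g/\Lambda_t$ (say for $t\in\R_{>0}$ to avoid the monodromy ambiguity, as in Lemma \ref{fin.sym.lem}), keeping in mind that the linear part of $f$ is independent of $t$, exactly as in the proof of Lemma \ref{projection}.

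First I would recall that $l$ records only the \emph{linear part} of $f$. At the level of the universal cover $\C^g$, the induced automorphism $f_t$ is an affine map $z\mapsto \tilde{M}z+v$ with $\tilde{M}\in \GL(g,\C)$ preserving $\Lambda_t$ and $v\in\C^g$; since a translation is homotopic to the identity and hence acts trivially on $H_1(\X_t,\ZZ)$, the matrix $l(f)$ is precisely the integral matrix by which $\tilde{M}$ acts on the first summand $E\ZZ^g$ of $\Lambda_t=E\ZZ^g\oplus \Omega'(t)\ZZ^g$ (the summand being preserved by Lemma \ref{fin.sym.lem}). Here the principal hypothesis $e_1=\cdots=e_g=1$ enters decisively: then $E\ZZ^g=\ZZ^g$ is generated by the standard basis $e_1,\ldots,e_g$ of $\C^g$, which is a $\C$-basis. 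Thus $l(f)=\mathrm{id}$ forces the $\C$-linear map $\tilde{M}$ to fix a full $\C$-basis, whence $\tilde{M}=\mathrm{id}$ and $f_t$ is a pure translation $z\mapsto z+v$.

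Next I would translate the hypothesis that $f$ preserves $L$ (and not merely $c_1(L)$) into the fiberwise statement that the translation $t_v$ satisfies $t_v^{*}L_t\cong L_t$, i.e. $v$ lies in the kernel $K(L_t)$ of the polarization isogeny $\phi_{L_t}\colon \X_t\to \widehat{\X_t}$. For a principal polarization $\phi_{L_t}$ is an isomorphism, so $K(L_t)=0$, giving $v=0$. Therefore $f_t=\mathrm{id}$ for every such $t$, and hence $f=\mathrm{id}$, which is the asserted injectivity.

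The step carrying all the weight is the passage from $l(f)=\mathrm{id}$ to $f$ being a translation, followed by the vanishing $K(L_t)=0$; the latter is exactly where principality is indispensable, since for a general polarization of type $(e_1,\ldots,e_g)$ the group $K(L_t)\cong(\ZZ/e_1\oplus\cdots\oplus\ZZ/e_g)^{\oplus 2}$ produces genuinely nontrivial translations lying in $\ker l$, so injectivity would fail. Beyond this, I expect no real obstacle: the only care needed is the family-versus-fiber bookkeeping, which is harmless because $\tilde{M}=l(f)$ is constant in $t$ and the conclusion $f_t=\mathrm{id}$ then holds uniformly.
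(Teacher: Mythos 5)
Your proposal is correct and follows essentially the same route as the paper: both reduce a kernel element of $l|_{\Aut(X,L)}$ to a pure translation (the paper via the factorization $\tilde{f}(z,t)=\tilde{f}(1,t)z^{l(f)}$, you via the observation that $l(f)=\mathrm{id}$ on the $\C$-basis $\ZZ^g$ forces the linear part to be the identity), and both then kill the translation using that $\phi_{\cL_t}$ is an isomorphism for a principal polarization, so no nontrivial translation preserves $\cL_t$. Your closing remark identifying $K(L_t)$ as the exact obstruction in the non-principal case is a correct and useful clarification, but the argument itself matches the paper's.
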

\begin{proof}
An automorphism $f\in \Aut (X,c_1(L))$ induces an automorphism $\tilde{f} \colon  (\C^*)^g\times \Delta^* \to (\C^*)^g\times \Delta^* $,
possibly after shrinking the radius of
$\Delta^*$ by rescale.
The automorphism $\tilde{f}$
can be described as follows.
$$\begin{array}{cccc}
\tilde{f} \colon &  (\C^*)^g\times \Delta^*      & \longrightarrow & (\C^*)^g\times \Delta^*                 \\
&\rotatebox{90}{$\in$} &                 & \rotatebox{90}{$\in$} \\
&       (z, t)             & \longmapsto     & (\tilde{f}(z,t),t)
\end{array}
$$
By Lemma \ref{projection}, we can write
$$\tilde{f}(z,t)=\tilde{f}(1,t)z^{l(f)},$$
where $$l(f)=(a_{i,j})_{1\le i,j\le g} \in {\rm GL}(g,\ZZ),$$
$$\tilde{f}(1,t):=\tilde{f}((1,\dots,1),t),$$ and
$$\tilde{f}(1,t)z^{l(f)}:=\left(\tilde{f}_i(1,t)\prod_j z_j^{a_{i,j}}\right)_i\in (\C^*)^g,$$
where $\tilde{f}_i(1,t)\in \C((t))^{\rm mero}$
are meromorphic functions on $\Delta$.
We set
$T_{\tilde{f}}(z,t):=\tilde{f}(1,t)z$. It induces a translation on each fiber $\X_t$.
In the construction of the homomorphism $p$, we considered $$T_{\tilde{f}}^{-1}\circ \tilde{f}(z,t)=z^{l(f)}.$$
If $l(f)=l(f')$, then  $T_{\tilde{f}}^{-1}\circ \tilde{f}(z,t)=T_{\tilde{f'}}^{-1}\circ \tilde{f'}(z,t)$.
That is, the equation $\tilde{f'}(z,t)=T_{\tilde{f'f^{-1}}}\circ\tilde{f}(z,t)$ holds.
The morphisms $f$ and $f'$ preserve the ample line bundle $L$.
That is, $f_t$ and $f'_t$ preserve $\cL_t$ for each fiber $\X_t$.
Hence, the morphism $T_{\tilde{f'f^{-1}}}|_t$ preserves the principal polarization $\cL_t$.
Since $\cL_t$ is principal polarization, i.e.,
the morphism $$\begin{array}{cccc}
\phi_{\cL_t} \colon &  \X_t      & \longrightarrow & {\rm Pic}^0 \X_t              \\
&\rotatebox{90}{$\in$} &                 & \rotatebox{90}{$\in$} \\
&       x            & \longmapsto     & T_x^*(\cL_t)\otimes \cL_t^{-1}
\end{array}
$$
is an isomorphism,
there is no (nontrivial) translation which
preserves the principal polarization $\cL_t$.
Therefore, the equation $\tilde{f'}(1,t)=\tilde{f}(1,t)$ holds.
It implies that $T_{\tilde{f}}=T_{\tilde{f'}}$.
Hence the equation $$\tilde{f}(z,t)=T_{\tilde{f}}(z^{l(f)})
=T_{\tilde{f'}}(z^{l(f')})=\tilde{f'}(z,t)$$
holds.
That is, the homomorphism
$l|_{\Aut (X,L)}$ is injective.
\end{proof}
Note that
the homomorphism $l:\Aut (X,c_1(L))\to {\rm GL}(g,\ZZ) \cap {\rm O}(g,B^{-1})$ ignores the effect of translations of the abelian varieties.
Now we consider a homomorphism that also reflects
translations.

\begin{Thm}\label{revised projection}
Under the same situation as Lemma \ref{projection},
there is a natural exact sequence
$$1\to
{\rm Hol}(\Delta, (\C^*)^g)\hookrightarrow
\Aut (X,c_1(L))\stackrel{p}{\longrightarrow} ({\rm GL}(g,\ZZ) \cap {\rm O}(g,B^{-1}))\ltimes \ZZ^g /B\ZZ^g,$$
where
${\rm Hol}(\Delta, (\C^*)^g)$
denotes the group of the germs of
holomorphic maps from
the neighborhood of $0\in \mathbb{C}$
to $(\mathbb{C}^\times)^g$.
\end{Thm}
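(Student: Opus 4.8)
The plan is to build $p$ from the linear part already extracted in Lemma~\ref{projection} together with a discrete translation part, and then to compute its kernel; I will not attempt surjectivity, since the assertion is only the exactness of the displayed sequence.

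First I would fix, for each $f\in\Aut(X,c_1(L))$, the lift to the universal cover produced in the proof of Corollary~\ref{no translation preserving pp}: after shrinking $\Delta^*$, $f$ extends to an automorphism of $\X^*$ over $\Delta^*$ and lifts to
\[
\tilde{f}\colon (\C^*)^g\times\Delta^*\to(\C^*)^g\times\Delta^*,\qquad \tilde{f}(z,t)=a(t)\,z^{l(f)},
\]
with $a(t):=\tilde{f}(1,t)$, each $a_i(t)\in\C((t))^{\rm mero}$, and $l(f)\in{\rm GL}(g,\ZZ)\cap{\rm O}(g,B^{-1})$ by Lemma~\ref{projection}. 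This lift is unique only up to precomposition with a translation by an element of the lattice $\langle p_{i,j}(t)\rangle$. I set the first component of $p(f)$ to be $l(f)$, and the second to be the class of the valuation ${\rm val}_t(a(t))$, read off in the principally polarized ($q$-)coordinates $z_i\mapsto z_i^{e_i}$ so that it lands in $\ZZ^g/B\ZZ^g$ with $B=({\rm val}_t q_{i,j})_{i,j}$ as in Setup~\ref{4.1} and Lemma~\ref{omega vs B}.

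The step I expect to be the main obstacle is showing that this second component is well defined and that $p$ is a homomorphism into the stated semidirect product. For well-definedness: the ambiguity in $a(t)$ is multiplication by a lattice element $\bigl(\prod_j p_{i,j}(t)^{m_j}\bigr)_i$, whose valuation, after the passage to $q$-coordinates and using $q_{i,j}=p_{i,j}^{e_i}$, is exactly $Bm\in B\ZZ^g$; hence the class in $\ZZ^g/B\ZZ^g$ is independent of the chosen lift, and this computation is precisely what forces the target lattice to be $B\ZZ^g$. For the semidirect structure I would first check that ${\rm GL}(g,\ZZ)\cap{\rm O}(g,B^{-1})$ acts on $\ZZ^g/B\ZZ^g$: from ${}^tM B^{-1}M=B^{-1}$ with $M=l(f)\in{\rm GL}(g,\ZZ)$ one gets $B^{-1}MB=({}^tM)^{-1}\in{\rm GL}(g,\ZZ)$, so $M$ preserves $B\ZZ^g$. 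Composing two lifts $\tilde{f}\circ\tilde{g}$ then shows that $l$ is multiplicative and that the translation valuations combine through the $l(f)$-action on the lattice exactly as the semidirect product multiplication prescribes; carrying out this bookkeeping carefully, including the dependence on the polarization type $E$, is the delicate part.

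Finally I would identify the kernel. If $p(f)$ is trivial then $l(f)=I_g$, and after correcting the lift by a lattice translation (which does not change $f$) the triviality of the valuation class lets me arrange ${\rm val}_t(a(t))=0$, so that $a$ is a nowhere-vanishing holomorphic germ, i.e.\ $a\in{\rm Hol}(\Delta,(\C^*)^g)$, and $f$ is the associated fibrewise translation. Conversely every element of ${\rm Hol}(\Delta,(\C^*)^g)$ defines a fibrewise translation that preserves each $c_1(\cL_t)$ and has trivial linear and discrete parts, hence lies in $\ker p$; thus $\ker p={\rm Hol}(\Delta,(\C^*)^g)$. The inclusion is injective because a holomorphic translation that acts trivially on every fibre would lie in $\langle p_{i,j}(t)\rangle$ and therefore be a lattice element of valuation $0$, which is trivial by the nondegeneracy (positive definiteness) of $B$ coming from maximal degeneration. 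Together these give exactness of the displayed sequence.
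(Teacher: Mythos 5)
Your proposal is correct and follows essentially the same route as the paper: define $p(f)=(l(f),\overline{{\rm val}_t(\tilde f(1,t))})$, check the valuation class is well defined modulo $B\ZZ^g$ because the lift is ambiguous only up to the lattice $\langle p_{i,j}(t)\rangle$, and identify $\ker p$ with ${\rm Hol}(\Delta,(\C^*)^g)$ by normalizing the lift to have valuation zero. Your explicit verification that ${}^tMB^{-1}M=B^{-1}$ forces $MB\ZZ^g=B\ZZ^g$ (so the semidirect product target makes sense) is a point the paper leaves implicit, but otherwise the arguments coincide.
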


Note that
via a surjective homomorphism
$$s\colon ({\rm GL}(g,\ZZ) \cap {\rm O}(g,B^{-1}))\ltimes \ZZ^g
\to
({\rm GL}(g,\ZZ) \cap {\rm O}(g,B^{-1}))\ltimes \ZZ^g /B\ZZ^g, $$
$s^{-1}({\rm Im}(p))$
is a
{\it
crystallographic group}, that is, $s^{-1}({\rm Im}(p))$ is a subgroup of the isometry group of $(\R^g, B^{-1})$
such that $s^{-1}({\rm Im}(p))$ acts discontinuously on $\R^g$
and an intersection of $s^{-1}({\rm Im}(p))$ with the group of translations on $\R^g$
is equal to $\ZZ^g$.
Later, in Theorem \ref{hybrid equivariant},
we provide a geometric meaning to the above map $p$
via ``tropicalization" i.e.,
passing to the base of
special Lagrangian fibrations or
non-archimedean SYZ fibrations.
\begin{proof}
In the proof of Corollary \ref{no translation preserving pp},
a pair $(l(f), T_{\tilde{f}})$ is
associated to
any automorphism $f\in \Aut (X,c_1(L))$.
By the definition of $\tilde{f}(1,t)$, the lift
$\tilde{f}(1,t)$ of $f(1,t)$ is uniquely determined up to $\langle p_{i,j}(t)\rangle$.
Hence, $${\rm val}_t(f(1,t)):=\overline{{\rm val}_t(\tilde{f}(1,t))}\in \ZZ^g/ B\ZZ^g$$
is well-defined.
Then we define $p(f):=(l(f),{\rm val}_t(f(1,t)))$.
It is clear that the map $p$ is a homomorphism.

To end the proof, we verify that $\ker p={\rm Hol}(\Delta, (\C^*)^g)$.
Take a lift $\tilde{f}(z,t)$ of $f(z,t)\in \ker p$.
By definition of $\ker p$,
the lift $\tilde{f}(z,t)$ can be described as $\tilde{f}(1,t)z$,
where ${\rm val}_t\tilde{f}(1,t)=\left({\rm val}_t\tilde{f}_i(1,t)\right)_i=Bv
\in B\ZZ^g$ for some $v\in \ZZ^g$.
Now we consider the morphism $F_{\tilde{f}}:\Delta^*      \to (\C^*)^g$ defined by
$$F_{\tilde{f}}(t):=
\tilde{f}(1,t)(p_{i,j}(t))^{-v}:=\left(\tilde{f}_i(1,t)\prod_j p_{i,j}(t)^{-v_j}\right)_i,$$
where $v=(v_i)\in \ZZ^g$.
It is clear that ${\rm val}_t F(t)=0\in \ZZ^g$.
In other words, $F_{\tilde{f}}(t)\in {\rm Hol}(\Delta, (\C^*)^g)$.
Then $F_{\tilde{f}}(t)$ does not depend on how $\tilde{f}$ is taken.
Hence  $F_f$ denotes $F_{\tilde{f}}$.
Indeed, the lift $\tilde{f}(1,t)$ of $f(1,t)$ is uniquely determined up to $\langle p_{i,j}(t)\rangle$.
Any other lift $\tilde{f'}(z,t)$ of $f(z,t)$ can be described as $\tilde{f}(z,t)(p_{i,j}(t))^w$ for some $w\in \ZZ^g$.
Consider $F_{\tilde{f'}}(t)$ in the same manner with respect to $\tilde{f'}$. Since ${\rm val}_t\tilde{f'}(1,t)=Bv+Bw=B(v+w)$,
it holds that
\[\begin{split}
    F_{\tilde{f'}}(t)&=\tilde{f'}(1,t)(p_{i,j}(t))^{-(v+w)}=\tilde{f}(1,t)(p_{i,j}(t))^w (p_{i,j}(t))^{-(v+w)} \\
&=\tilde{f}(1,t)(p_{i,j}(t))^{-v}=F_{\tilde{f}}(t).
\end{split}\]
Hence, we obtain the homomorphism $\varphi:\ker p \to  {\rm Hol}(\Delta, (\C^*)^g)$ defined by $f\mapsto F_f(t)$.
On the other hand, for any $F(t)\in  {\rm Hol}(\Delta, (\C^*)^g)$, the morphism $F(t)z:(\C^*)^g\times \Delta^*\to(\C^*)^g\times \Delta^*$
descends to an automorphism $f_F\in \ker p$.
That is, we obtain the homomorphism $\psi : {\rm Hol}(\Delta, (\C^*)^g)\to \ker p$
defined by $F\mapsto f_F$.
It is obvious that $\varphi\circ \psi={\rm id}$.
By construction of $\varphi$, for any $f\in \ker p$, we can take $F_f(t)z$ as a lift of $f$.
It implies that $\psi\circ\varphi={\rm id}$ also holds.
Therefore, $\ker p\cong{\rm Hol}(\Delta, (\C^*)^g)$ holds.
\end{proof}
Now we consider more geometric meaning of $({\rm GL}(g,\ZZ) \cap {\rm  O}(g,B^{-1}))\ltimes \ZZ^g /B\ZZ^g$.
For the family $(\X^*,c_1(\cL^*))$ under discussion,
set $$\mathcal{B}_{0}:=(\mathcal{B}_0,\nabla_A(0),\nabla_B(0),g_0)$$ as Theorem \ref{SLAG.OO}.
Then Theorem \ref{SLAG.OO} implies that $(\mathcal{B}_{0},\nabla_B(0),g_0)$ is the
integral affine manifold $\R^g/B\ZZ^g$ with its lattice points $\ZZ^g/B\ZZ^g$ and the flat metric induced by $B^{-1}$.
See \cite{GH} or \cite{GS06} for a series of definitions on (integral) affine manifolds.
Actually, $\nabla_A(0)$ is determined by $(\mathcal{B}_{0},\nabla_B(0),g_0)$ via Legendre transform (c.f. \cite{Hit}).
For this reason, we denote
$(\mathcal{B}_0,\nabla_A(0),\nabla_B(0),g_0)$
as $(\mathcal{B}_{0},\nabla_B(0),g_0)$, that is, $(\R^g/B\ZZ^g,B^{-1})$.

\begin{Def}\label{IAM.def}
For the flat integral affine manifold $(\R^g/B\ZZ^g,B^{-1})$ as above,
define the automorphic group $\Aut (\R^g/B\ZZ^g, B^{-1})$ to be the group consisting of automorphisms that preserve the structure of $(\R^g/B\ZZ^g,B^{-1})$.
That is, an element $f\in\Aut (\R^g/B\ZZ^g, B^{-1})$ is an integral affine map
$f: \R^g/B\ZZ^g \stackrel{\sim}{\longrightarrow} \R^g/B\ZZ^g $ that preserves the flat  metric induced by $B^{-1}$.
\end{Def}

\begin{Prop}\label{Aut of GH}
Under the notation of Definition
\ref{IAM.def}, the following holds
$$\Aut (\R^g/B\ZZ^g, B^{-1})\cong
({\rm GL}(g,\ZZ) \cap {\rm O}(g,B^{-1}))\ltimes \ZZ^g/ B\ZZ^g.$$

\end{Prop}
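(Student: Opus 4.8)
The plan is to realise every automorphism of the flat integral affine torus on its universal cover $\R^g$, read off a linear part and a translation part, and then show that these data range exactly over the asserted semidirect product. Concretely, I would start from an arbitrary $f\in\Aut(\R^g/B\ZZ^g,B^{-1})$ as in Definition \ref{IAM.def}. Since the integral affine structure $\nabla_B(0)$ on $\R^g/B\ZZ^g$ is descended from the standard one on $\R^g$ with lattice points $\ZZ^g$, the map $f$ lifts to an integral affine automorphism $\tilde f$ of $(\R^g,\ZZ^g)$, necessarily of the form $\tilde f(x)=Mx+c$ with $M\in{\rm GL}(g,\ZZ)$ and $c\in\ZZ^g$; the translation $c$ is integral precisely because $f$ permutes the lattice points $\ZZ^g/B\ZZ^g$ (a translation by a non-integral vector fails to preserve the integral affine structure). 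Descent of $\tilde f$ to $\R^g/B\ZZ^g$ requires $MB\ZZ^g=B\ZZ^g$, and preservation of the flat metric $B^{-1}$ forces the linear part to satisfy ${}^tM B^{-1}M=B^{-1}$, i.e. $M\in{\rm O}(g,B^{-1})$ (this is the same computation underlying Lemma \ref{projection}). Any two lifts of $f$ differ by a deck translation in $B\ZZ^g$, so $c$ is canonically defined modulo $B\ZZ^g$; this yields a candidate assignment $f\mapsto(M,\bar c)$ into $({\rm GL}(g,\ZZ)\cap{\rm O}(g,B^{-1}))\ltimes\ZZ^g/B\ZZ^g$.

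The one genuinely non-formal point, which I expect to be the crux, is to verify that the descent condition $MB\ZZ^g=B\ZZ^g$ is \emph{automatic} once $M\in{\rm GL}(g,\ZZ)\cap{\rm O}(g,B^{-1})$, so that the target group is exactly as claimed and carries no hidden extra constraint. This follows from a short manipulation: from ${}^tM B^{-1}M=B^{-1}$ one gets $B^{-1}M={}^t(M^{-1})B^{-1}$, hence $B^{-1}MB={}^t(M^{-1})$, and since $M\in{\rm GL}(g,\ZZ)$ the right-hand side lies in ${\rm GL}(g,\ZZ)$. Therefore $MB\ZZ^g=B\,{}^t(M^{-1})\ZZ^g=B\ZZ^g$. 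The very same identity shows that $x\mapsto Mx$ sends $\ZZ^g$ to $\ZZ^g$ and $B\ZZ^g$ to $B\ZZ^g$, so it descends to a permutation of $\ZZ^g/B\ZZ^g$; in particular the action $\bar c\mapsto\overline{Mc}$ of ${\rm GL}(g,\ZZ)\cap{\rm O}(g,B^{-1})$ on $\ZZ^g/B\ZZ^g$ is well defined, which is precisely the action defining the semidirect product on the right-hand side.

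With this in hand the remaining verifications are formal. Composing lifts gives $\tilde f_1\circ\tilde f_2(x)=M_1M_2\,x+(M_1 c_2+c_1)$, matching the product law $(M_1,\bar c_1)(M_2,\bar c_2)=(M_1M_2,\overline{c_1+M_1 c_2})$, so $f\mapsto(M,\bar c)$ is a group homomorphism. It is injective because $M={\rm id}$ and $c\in B\ZZ^g$ force $\tilde f$ to be a deck transformation, whence $f$ is the identity. For surjectivity, given $(M,\bar c)$ in the target, the formula $x\mapsto Mx+c \bmod B\ZZ^g$ is well defined by the key computation above, is integral affine with linear part in ${\rm GL}(g,\ZZ)$ and integral translation, preserves the flat metric since $M\in{\rm O}(g,B^{-1})$, and is invertible with inverse of the same type; hence it lies in $\Aut(\R^g/B\ZZ^g,B^{-1})$ and maps to $(M,\bar c)$. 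This establishes the asserted isomorphism, and I would close by noting that the resulting group coincides with the target of the homomorphism $p$ of Theorem \ref{revised projection}, giving the geometric interpretation of $p$ as an action by integral affine isometries on the Gromov--Hausdorff collapse.
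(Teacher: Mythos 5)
Your proof is correct and follows essentially the same route as the paper's: lift $f$ to the universal cover, read off a linear part $M\in{\rm GL}(g,\ZZ)\cap {\rm O}(g,B^{-1})$ and a translation $c\in\ZZ^g$ well defined modulo $B\ZZ^g$, and check that this assignment is an isomorphism onto the semidirect product. The only place you go beyond the paper is the explicit verification via $B^{-1}MB={}^{t}(M^{-1})$ that $MB\ZZ^g=B\ZZ^g$, which the paper subsumes under ``the existence of the inverse map is obvious''; this is a worthwhile addition, since it is precisely what makes the semidirect product action on $\ZZ^g/B\ZZ^g$ and the surjectivity well defined.
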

\begin{proof}
Consider the universal covering $u:\R^g \twoheadrightarrow \R^g/B\ZZ^g$
which we denote by $x\mapsto \overline{x}$.
Note that the map $u$ is an integral affine map.
For $f\in\Aut (\R^g/B\ZZ^g, B^{-1})$, the map $f\circ u$ is also  an universal covering.
The universality gives an homeomorphism $\tilde{f}:\R^g\stackrel{\sim}{\longrightarrow}\R^g$ such that $u\circ \tilde{f}=f\circ u$.
Here, the map $\tilde{f}: (\R^g,B^{-1}) \stackrel{\sim}{\longrightarrow} (\R^g,B^{-1})$ is an isometry since
the map $f$ is an isometry and the map $u$ is locally trivial.
Hence,
\[\tilde{f}(x)=Mx+v,\]
where $M\in O(g,B^{-1})$ and $v\in \R^g$.
Moreover, since the map $f$ is an integral affine morphism and
the map $u$ is locally trivial,
the map $\tilde{f}$ is also an integral affine morphism.
Hence, the above pair $(M,v)$ satisfies $M\in {\rm GL}(g,\ZZ)$ and $v\in \ZZ^g$.
It implies that $f(\overline{x})=M\overline{x}+\overline{v}$, where $(M,\overline{v})\in
({\rm GL}(g,\ZZ) \cap {\rm O}(g,B^{-1}))\ltimes \ZZ^g/ B\ZZ^g$.
That is, we obtain the homomorphism $$\varphi: \Aut (\R^g/B\ZZ^g, B^{-1})\to
({\rm GL}(g,\ZZ) \cap {\rm O}(g,B^{-1}))\ltimes \ZZ^g/ B\ZZ^g.$$
Since the existence of the inverse map is obvious, the assertion follows.
\end{proof}

In particular, $\Aut (X,c_1(L))$ induces at most finite group actions on $\mathcal{B}_0$.
In Theorem \ref{Hybrid SYZ} \eqref{finite.hybrid}, we have already seen that
$\Aut (X,c_1(L))$ induces continuous actions on
 both $\X^{*, \rm hyb}$ and $\mathcal{B}$.
Further,
$f^{\rm hyb}$ is $\Aut (X,c_1(L))$-equivariant.
Now we describe the actions of $\Aut (X,c_1(L))$ concretely,
by relating the $H$-action of
Theorem \ref{Hybrid SYZ}
\eqref{finite.hybrid}
and the map $p$ in
Theorem \ref{revised projection}.
\begin{Thm}\label{hybrid equivariant}
Under the same situation as Theorem \ref{revised projection},
for any subgroup $H$ of $\Aut (X,c_1(L))$,
consider the induced actions on $\X^{*, \rm hyb}$ and
$\mathcal{B}$
of $H$ as Theorem \ref{Hybrid SYZ}.
We denote the restriction of the latter action
on $\mathcal{B}_t$ as $p_t(h)$
for each $h\in H$ and $t\in\Delta$.

Then $p_t(h)$ is explicitly described as
\begin{enumerate}
    \item \label{action1}
    For $t\neq 0$,
$$\left(l(h),\log_{|t|}|\tilde{h}(1,t)|\right)\in
{\rm GL}(g,\ZZ)\ltimes \left(\R^g/\left(\frac{-2\pi}{\log |t|} {\rm Im} \Omega (t)\right)\ZZ^g \right),$$
where $\tilde{h}(1,t)$ is a lift of $h(1,t)$
and $\log_{|t|}(-)$ simply means the
logarithm with the base $|t|$. Note that the ambiguity of the choice of $\tilde{h}(1,t)$ vanishes on $\mathcal{B}_t$.

\item \label{action2}
$p_0(h)$ on $\mathcal{B}_0$ is equal to $p(h)$ as constructed in Theorem \ref{revised projection}.
\end{enumerate}
Further, $f_t(hx)=p_t(h)f_t(x)$
for each $h\in H$ and $t\in \Delta$.
\end{Thm}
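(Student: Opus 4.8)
The plan is to track how the lift of each $h\in H$ to the universal cover $\C^g$ descends through the ``imaginary part'' fibration $f_t$ of Theorem \ref{SLAG.ppav}, and then to pass to the limit $t\to 0$ using Lemma \ref{omega vs B}. First I would fix the clean additive coordinates in which the fibration is transparent: writing $z_i^{\rm new}=\frac{1}{2\pi\sqrt{-1}}\log z_i$ on the universal cover, the lattice becomes $\ZZ^g\oplus\Omega(t)\ZZ^g$ and $f_t$ is (a rescaling of) $z^{\rm new}\mapsto {\rm Im}(z^{\rm new})$, with $\mathcal{B}_t=\R^g/{\rm Im}\,\Omega(t)\ZZ^g$ as in Theorem \ref{SLAG.ppav}. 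By Corollary \ref{no translation preserving pp} (and the computation inside Theorem \ref{revised projection}) the lift of $h$ is $z\mapsto \tilde{h}(1,t)\,z^{l(h)}$, which in these coordinates is the affine map $z^{\rm new}\mapsto l(h)\,z^{\rm new}+u_h(t)$ with $u_h(t)=\frac{1}{2\pi\sqrt{-1}}\log\tilde{h}(1,t)$ and linear part $l(h)\in {\rm GL}(g,\ZZ)$.

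Next, since $l(h)$ is an integral (hence real) matrix it commutes with taking imaginary parts, so $f_t$ intertwines this affine action with the map $y\mapsto l(h)\,y+{\rm Im}\,u_h(t)$ on the base; this is exactly the asserted equivariance $f_t(hx)=p_t(h)f_t(x)$. A direct computation gives ${\rm Im}\,u_{h,i}(t)=\frac{-1}{2\pi}\log|\tilde{h}_i(1,t)|$, and after rescaling the base coordinates by $\frac{-2\pi}{\log|t|}$ (the normalization built into $\nabla_B(t)$ by Theorem \ref{SLAG.ppav} \eqref{B.ppav}) the translation part becomes $\frac{\log|\tilde{h}(1,t)|}{\log|t|}=\log_{|t|}|\tilde{h}(1,t)|$, proving \eqref{action1}. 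I would also record here that the ambiguity of the lift $\tilde{h}(1,t)$ by $\langle p_{i,j}(t)\rangle$ shifts $\log_{|t|}|\tilde{h}(1,t)|$ by an element of $\left(\frac{-2\pi}{\log|t|}{\rm Im}\,\Omega(t)\right)\ZZ^g$, using $\log|p_{i,j}(t)|=-2\pi\beta_{i,j}(t)$, so the formula is well defined on $\mathcal{B}_t$.

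For \eqref{action2} I would pass to the limit $t\to 0$. The linear part $l(h)$ is independent of $t$, and the analytic fact $\lim_{t\to 0}\frac{\log|g(t)|}{\log|t|}={\rm val}_t(g)$ for meromorphic $g$ gives $\log_{|t|}|\tilde{h}(1,t)|\to {\rm val}_t(\tilde{h}(1,t))$, while the lattice $\left(\frac{-2\pi}{\log|t|}{\rm Im}\,\Omega(t)\right)\ZZ^g$ converges to $B\ZZ^g$ by Lemma \ref{omega vs B}. Since the $H$-action on $\mathcal{B}$ and its continuity were already established in Theorem \ref{Hybrid SYZ} \eqref{finite.hybrid}, the restriction $p_0(h)$ is necessarily the limit of $p_t(h)$, which equals $\left(l(h),\overline{{\rm val}_t(\tilde{h}(1,t))}\right)=p(h)$ by the very definition of $p$ in Theorem \ref{revised projection}.

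I expect the main obstacle to be bookkeeping rather than conceptual: keeping the several coordinate conventions consistent (the multiplicative $z_i$, the additive universal-cover coordinates, and the rescaling factor $\frac{-2\pi}{\log|t|}$), and in particular verifying that the normalization making $\nabla_B(t)$ integral is precisely the one under which the translation part reads off as $\log_{|t|}|\tilde{h}(1,t)|$ and converges correctly, modulo $B\ZZ^g$, to the $\ZZ^g/B\ZZ^g$-valued translation part of $p(h)$.
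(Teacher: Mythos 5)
Your proposal is correct, and for part \eqref{action1} it follows essentially the paper's own route: the paper also writes the lift as $\tilde{h}(z,t)=\psi(t)t^vz^{M}$ and reads off $f_t(hx)=l(h)f_t(x)+\log_{|t|}|\tilde{h}(1,t)|$ directly from the multiplicative description $f_t(x)=(-\log_{|t|}|x_i|)_i$, whereas you do the identical computation in the additive universal-cover coordinates; the two are interchangeable, and your explicit check that the ambiguity of $\tilde{h}(1,t)$ lands in $\bigl(\tfrac{-2\pi}{\log|t|}{\rm Im}\,\Omega(t)\bigr)\ZZ^g$ is the same well-definedness remark the paper makes. Where you genuinely diverge is part \eqref{action2}. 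The paper does \emph{not} take a limit: it computes $p_0(h)$ directly on the Berkovich fiber, using $f_0(x)=(-\log|Z|_x)$ for the multiplicative seminorm $|\cdot|_x$ and the observation that $\log|\psi(t)|_x=0$ for the invertible holomorphic germ $\psi$, so the translation part is ${\rm val}_t(t^v)=v$ on the nose. You instead deduce \eqref{action2} from \eqref{action1} by letting $t\to 0$, invoking $\log_{|t|}|\tilde{h}(1,t)|\to{\rm val}_t(\tilde{h}(1,t))$, the lattice convergence of Lemma \ref{omega vs B}, and the continuity of the $H$-action on $\mathcal{B}$ established in Theorem \ref{Hybrid SYZ} \eqref{finite.hybrid}. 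This is a valid alternative: continuity forces $p_0(h)x_0=\lim_k p_{t_k}(h)x_{t_k}$ for any $x_{t_k}\to x_0$, and your explicit formula identifies that limit with $p(h)x_0$. The trade-off is that your argument leans on the (already proven but nontrivial) continuity statement and on the description of the hybrid topology on $\mathcal{B}$, while the paper's direct seminorm computation is self-contained on the central fiber and makes the non-archimedean meaning of the translation part transparent; conversely, your route makes it clearer that \eqref{action2} is the inevitable degeneration of \eqref{action1}.
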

\begin{proof}
First we show \eqref{action1}.
By the proof of
Theorem \ref{revised projection}, $h\in H$ is lifted to
\[\tilde{h}(z,t)=\psi (t) t^v z^{M} :(\C^*)^g\times \Delta^*\to(\C^*)^g ,\] where $(M,v)=p(h)$ and
$\psi(t):=\tilde{h}(1,t)/t^v\in {\rm Hol}(\Delta, (\C^*))$.
By the argument of Lemma \ref{projection}, $M=l(h)$ induces the automorphism of
\[(\mathcal{B}_t,\nabla_B(t),g_t)\cong \left(\left(\R^g/\left(\frac{-2\pi}{\log |t|} {\rm Im} \Omega (t)\right)\ZZ^g \right),
\left(\frac{-2\pi}{\log |t|} {\rm Im} (\Omega (t))\right)^{-1}\right)\] for $t\neq 0$.
Hence, $\left(l(h),\log_{|t|}|\tilde{h}(1,t)|\right)$ also induces the automorphism of $(\mathcal{B}_t,\nabla_B(t),g_t)$.
Note that the lift $\tilde{h}(1,t)$ of $h(1,t)$ is uniquely determined up to $\langle p_{i,j}(t)\rangle$.
It implies that $\log_{|t|}|\tilde{h}(1,t)|$ is well-defined on $\R^g/
\left(\frac{-2\pi}{\log |t|} {\rm Im} \Omega (t)\right)=\R^g/
\left(\log_{|t|} |p_{i,j}(t)|\right)$.

Now consider $f_t(hx)$.
By the argument of the proof of Theorem \ref{Hybrid SYZ}, special Lagrangian fibration
$f_t(x=(x_1,\cdots,x_g))$ is described for $t\neq 0$ as
$$f_t(x)=\left(-\log_{|t|}|x_i|\right)_{i=1,\cdots,g}  \in \R^g/
\left(\frac{-2\pi}{\log |t|} {\rm Im} \Omega (t)\right)\ZZ^g. $$
Hence, it is clear that $$f_t(hx)=l(h)f_t(x)+\log_{|t|}|\tilde{h}(1,t)|=\left(l(h),\log_{|t|}|\tilde{h}(1,t)|\right)\cdot f_t(x).$$
By Theorem \ref{Hybrid SYZ}, $f_t$ is $H$-equivariant.
It implies that $\left(l(h),\log_{|t|}|\tilde{h}(1,t)|\right)$ is equal to the induced action $p_t(h)$ on $\mathcal{B}_t$ for $t\neq 0$.

Now we prove \eqref{action2} by
considering $f_0(hx)$.
Now, $f_0(x)$ is defined as $(-\log |Z|_x)\in \R^g/B\ZZ^g$, where $x\in X^{\rm an}$, $|\cdot|_x$ means the corresponding multiplicative seminorm on $X$ and $Z=(Z_i)$ is the coordinates of the split algebraic torus as appeared in \cite{Goto}.
Then, it follows from $ \log |\psi(t)|_x =0$ that $f_0(hx)=p(h)f_0(x)$.
Thus, $p_0(h)=p(h)$ holds similarly.

Finally, the $H$-equivalence of $f^{\rm hyb}$
implies the last claim of the theorem.
\end{proof}

In general, we cannot expect that the symmetry of $\mathcal{B}_0$ lifts on the symmetry of $(X,c_1(L))$ or
$\mathcal{X}^*$.
However,  we shall see that such lifting exists
for the following special situation.
\begin{Setup}\label{Setup2}
We start with a
 positive definite symmetric matrix $B=(b_{i,j})_{i,j}\in {\rm Mat}_{g\times g}(\ZZ)$.
Then we obtain the following family
$$\X_B^*=(\Delta^*\times (\C^*)^g) /\langle t^B \rangle_{i,j} \to \Delta^*,$$
where $t^B$ means the $g\times g$
matrix
$\left( t^{b_{i,j}}\right)_{i,j}$.
We denote the associated polarized smooth variety to
$(\X_B^*,\mathcal{L}_B^*)$
by $(X_B,L_B)$.
\end{Setup}
In this situation, we can strengthen Theorem \ref{revised projection} as a split exact sequence.
\begin{Prop}\label{special case}
Under the above setup \ref{Setup2}, we have
$$\Aut (X_B,c_1(L_B))\cong \Aut (\mathcal{B}_0)\ltimes {\rm Hol}(\Delta, (\C^*)^g).$$
\end{Prop}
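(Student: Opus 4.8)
The plan is to upgrade the exact sequence of Theorem \ref{revised projection} to a \emph{split} short exact sequence, after identifying its target with $\Aut(\mathcal{B}_0)$ via Proposition \ref{Aut of GH}. Since Theorem \ref{revised projection} already identifies ${\rm Hol}(\Delta,(\C^*)^g)$ with $\ker p$, it suffices to produce a group-theoretic section $s$ of $p$: surjectivity of $p$ is then automatic, and the standard structure theorem for split extensions yields $\Aut(X_B,c_1(L_B))\cong \Aut(\mathcal{B}_0)\ltimes {\rm Hol}(\Delta,(\C^*)^g)$ with $\Aut(\mathcal{B}_0)$ acting on the kernel by conjugation through $s$.

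The section is essentially forced by the monomial shape of Setup \ref{Setup2}. Given $(M,\overline v)\in ({\rm GL}(g,\ZZ)\cap {\rm O}(g,B^{-1}))\ltimes \ZZ^g/B\ZZ^g \cong \Aut(\mathcal{B}_0)$, I would pick a lift $v\in\ZZ^g$ of $\overline v$ and set $s(M,\overline v)$ to be the map $(z,t)\mapsto (t^v z^M,t)$ in the notation of Corollary \ref{no translation preserving pp}. First I would check that this descends to $\X_B^*=(\Delta^*\times(\C^*)^g)/\langle t^B\rangle$: since the lattice acts by $z\mapsto z\,t^{Bm}$, one computes $s(M,\overline v)(z\,t^{Bm},t)=s(M,\overline v)(z,t)\cdot t^{MBm}$, and the orthogonality relation $M B\,{}^t M=B$ combined with $M\in{\rm GL}(g,\ZZ)$ gives $B^{-1}MB={}^t(M^{-1})\in{\rm GL}(g,\ZZ)$, hence $MB\ZZ^g=B\ZZ^g$ and the shift $t^{MBm}$ is again a lattice element. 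Next I would observe that the translation factor $t^v$ does not affect the polarization while the linear part $M\in{\rm O}(g,B^{-1})$ preserves $c_1(L_B)$ by the computation in Lemma \ref{projection}, so $s(M,\overline v)\in\Aut(X_B,c_1(L_B))$; independence of the lift $v$ (replacing $v$ by $v+Bm$ changes the map by a deck transformation) shows $s$ is well-defined on $\overline v$.

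It then remains to verify the two formal properties. A short exponent bookkeeping gives $s(M_1,\overline v_1)\circ s(M_2,\overline v_2)=s\big(M_1M_2,\ \overline{v_1+M_1v_2}\big)$, which is exactly the semidirect-product law $(M_1,\overline v_1)(M_2,\overline v_2)$ in $\Aut(\mathcal{B}_0)$, so $s$ is a homomorphism; and since the lift of $s(M,\overline v)$ is $t^v z^M$ with $\tilde f(1,t)=t^v$ and linear part $M$, one reads off $p(s(M,\overline v))=(M,\overline v)$, i.e.\ $p\circ s={\rm id}$. Together these give the splitting and therefore the claimed isomorphism.

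I expect the descent step to be the only genuine obstacle. In the general setup of Theorem \ref{revised projection} the homomorphism $p$ need not be surjective, precisely because an arbitrary $(M,\overline v)$ admits no canonical lift compatible with the lattice generated by general meromorphic $p_{i,j}(t)$; there is no reason $w^M$ lands back in $\langle p_{i,j}(t)\rangle$. The hypothesis $p_{i,j}(t)=t^{b_{i,j}}$ collapses this to the purely arithmetic identity $MB\ZZ^g=B\ZZ^g$, valid for every $M\in{\rm GL}(g,\ZZ)\cap{\rm O}(g,B^{-1})$, which is exactly what makes the explicit monomial lift $t^v z^M$ available and simultaneously furnishes both the surjectivity of $p$ and the splitting.
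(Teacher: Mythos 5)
Your proposal is correct and follows essentially the same route as the paper: both construct the splitting by the explicit monomial lift $(z,t)\mapsto (t^v z^M,t)$, verify descent via $MB\ZZ^g=B\ZZ^g$ (a consequence of $M\in{\rm GL}(g,\ZZ)\cap{\rm O}(g,B^{-1})$), check independence of the lift of $\overline v$, and conclude from $p\circ s={\rm id}$ together with Theorem \ref{revised projection}. Your explicit verification of the semidirect-product law for $s$ is a detail the paper leaves implicit, but the argument is the same.
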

\begin{proof}
By Theorem \ref{revised projection},
it suffices to see that there is a homomorphism $\iota: \Aut (\mathcal{B}_0)\to \Aut (X_B,c_1(L_B))$
such that $p\circ\iota={\rm id}$.
For $(M,\overline{v})\in \Aut (\mathcal{B}_0)\cong
({\rm GL}(g,\ZZ) \cap {\rm O}(g,B^{-1}))\ltimes \ZZ^g/ B\ZZ^g$,
consider the map $\tilde{f}: (\C^*)^g\times \Delta^* \to (\C^*)^g\times \Delta^*$ defined by $\tilde{f}(z,t):=t^vz^M$ in the same
manner of the proof of Corollary \ref{projection}.
Here, we see that the map $\tilde{f}$ descends to the map $f:(\X_B^*, c_1(\cL_B^*))\to (\X_B^*,c_1(\cL_B^*))$.
Indeed, the fiber $\X_B^*|_t$ is given by $\C^g/(I \ \Omega(t))\ZZ^{2g}$ for each $t\in \Delta^*$, where
$\Omega(t)=\frac{\log t}{2\pi\sqrt{-1}}B.$
The above $M\in {\rm GL}(g,\ZZ) \cap {\rm O}(g,B^{-1})$ satisfies $MB\ZZ^g=B\ZZ^g$ and ${}^tMB^{-1}M=B^{-1}$.
In particular, it implies that ${}^tM({\rm Im} \Omega (t))^{-1}M=({\rm Im} \Omega (t))^{-1}$.
Hence the automorphism $z^M: \X_B^* \to \X_B^*$ is well-defined and preserves the metric for each fiber.
Since translations have no effects on the period matrix and the metric,
$\tilde{f}$ descends to  the automorphism $f$ of $(X_B,c_1(L_B))$.
Note that the morphism $f$ does not depend on how $\tilde{f}$ is taken.
Indeed, the ambiguity of $v$ vanishes in the process of obtaining $f$ from $\tilde{f}$.
Hence we obtain the homomorphism $\iota: \Aut (\mathcal{B}_0)\to \Aut (X_B,c_1(L_B))$.
By construction, it is obvious that $p\circ \iota={\rm id}$.  Therefore, the assertion holds.
\end{proof}

\begin{Rem}
We only considered
maximal degenerations case in this paper but  degenerations of $g$-dimensional polarized  abelian varieties $(X,L)/\C((t))^{\rm mero}$
with other torus rank $i$ should work similarly.
Firstly, the same method  as
\cite[proof of 1.1, cf., also Remark 5.3]{Mat}
by studying actions on
weight filtration
or weight spectral sequences, implies that for any polarized endomorphism
$f$ of
$(X,L)/\C((t))^{\rm mero}$,
any eigenvalue $e$ of $f^*|_{H^1(\mathcal{X}_t,\C)}$,
the degree $d$ of the minimal polynomial of rational coefficients satisfies
$d\ge \max\{i,2g -2i\}.$
 We may explore
more details in future.
\end{Rem}


\section*{Appendix: non-archimedean Calabi-Yau metrics \\ vs. Calabi-Yau metrics}

In this appendix, we show that non-archimedean Calabi-Yau metric appears as the limit of (complex)
Calabi-Yau metrics,
at least in the case of abelian varieties.
This answers a question of Yang Li asked after the appearance of this paper.
We thank him for a nice question.

\vspace{2mm}
First we recall that any maximal degeneration of polarized abelian varieties $(\mathcal{X}^*,\mathcal{L}^*)\to \Delta^*$
can be written in the following way, after Lemma \ref{fin.sym.lem}.
Suppose that the polarizations are of the type $(e_1,\cdots,e_g)$ with $e_i|e_{i+1}$
and we set
$E={\rm diag}(e_1,\cdots,e_g)$. Then we can write
$$\X^*=\sqcup_{t\in \Delta^*}X_t=((\C^*)^g \times \Delta^*) /\langle p_{i,j}(t) \rangle_{1\le i,j\le g} \to \Delta^*,$$
for some $p_{i,j}(t)
=q_{i,j}(t)^{e_i}$
where $q_{i,j}(t)
\in \C((t))^{\rm mero}$. We denote the polarizations as $\mathcal{L}^*=\sqcup_{t\in \Delta^*} \mathcal{L}_t$.
Recall
that the fibers are
$$\C^g/(E\ZZ^g \oplus \Omega (t)\ZZ^g)$$ where
$
\Omega (t)=\left( \frac{1}{2\pi \sqrt{-1}} \log q_{i,j}(t)\right)_{i,j}=\alpha_{i,j}(t)+\sqrt{-1}\beta_{i,j}(t)$.
The polarization $\mathcal{L}_t$ is obtained as
a discrete quotient of the trivial line bundle over $\mathbb{C}^g$. That is, for
$$\tilde{\mathcal{L}_t}:=\C\times \C^g,$$
as a line bundle over $\C^g$, we set
$$\mathcal{L}_t=\tilde{\mathcal{L}_t}/(E\mathbb{Z}^g+\Omega(t)\mathbb{Z}^g),$$
where the action of $(E\mathbb{Z}^g+\Omega(t)\mathbb{Z}^g)$
is defined by the automorphic factor
\begin{align}
\label{23}
f(E\vec{m}+\Omega\vec{n})=&(e^{\pi z(\beta_{i,j}(t))^{-1}\overline{u}+\frac{1}{2}\pi u(\beta_{i,j}(t))^{-1}\overline{u}}\cdot (-1)^{{}^{t}\vec{m}\cdot \vec{n}})\\
\label{24}
&\times e^{-(\pi z(\beta_{i,j}(t))^{-1} u                +\frac{1}{2}\pi u(\beta_{i,j}(t))^{-1} u )}\in \mathbb{C}^*
\end{align}
for $z\mapsto z+u$ with $u=E\vec{m}+\Omega\vec{n}$.
The former \eqref{23} is the Appel-Humbert theorem description in e.g. \cite{Mum}, while the
 latter \eqref{24}
 is just a normalization factor (coboundary in the context of group cohomology),
 which does not change the isomorphic class of the line bundle.
The generic fiber of $(\X^*,\mathcal{L}^*) \to \Delta^*$
is denoted as $(X,L)$ over $\C((t))^{\rm mero}$ and its analytification is $(X^{\rm an},L^{\rm an})$.

\vspace{3mm}
Here, we introduce a limiting process of metrics from complex (Archimedean) to non-archimedean
world.

\begin{Def}
For a polarized projective morphism $\mathcal{X}^*=\cup_t X_t \to \Delta^*$ with a relative
ample line bundle $\mathcal{L}^*$ on $\mathcal{X}^*$, we denote the
associate non-archimedean analytification over $\mathbb{C}((t))^{\rm mero}$ as
$(X,L)$. We take the limit hybrid analytification (\cite[Definition 4.9]{BJ})
$X^{\rm hyb}=\mathcal{X}^*\sqcup X$ and the associated line bundle $L^{\rm hyb}$ over
$X^{\rm hyb}$.

For a continuous family $h_t$ of hermitian metrics on $L_t:=\mathcal{L}|_{X_t}$,
suppose there is a metric $h$ of $L$ such that
$\{h_t\}_{t\neq 0}$ together with $h$ gives a {\it continuous} family of metrics on
the hybrid analytification $L^{\rm hyb}$.
Then we call that the non-archimedean metric $h$ is the
{\it normalized limit metric} of $h_t$s.
\end{Def}

After this work, the authors learnt there is an interesting
then-ongoing work by L.\ Pille-Schneider \cite{PS}
which perhaps is related to the above notion but more general.
Now, recall the so-called cubic metric $h_t$ (\cite{MB})
on $L_t$ for each $t\in \Delta^*$, which is unique up to multiplication by
positive real constants. These are characterized as hermitian metrics
whose curvatures forms $\omega_{h_t}$ are flat K\"ahler forms.
We show how this $h_t$ limits to the non-archimedean CY metric on $L^{\rm an}$, which exists by \cite{BFJ} and denoted by
$h^{\rm NA}$ in this notes.

\begin{Thm}\label{NAMA as lim}
Appropriately rescaled (i.e., constants multiplied for each $t$)
cubic metrics $h_t$ has the normalized limit metric which is
the non-archimedean CY metric $h^{\rm NA}$.
\end{Thm}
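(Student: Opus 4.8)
The plan is to make the cubic metric completely explicit in the coordinates of Lemma \ref{FC} and then to evaluate its normalized hybrid limit directly along sequences approaching the Berkovich fiber, in exactly the manner of the proof of Theorem \ref{Hybrid SYZ}. First I would record the metric. By the Appell--Humbert description \eqref{23}--\eqref{24}, the canonical trivializing section $s$ of $\tilde{\mathcal{L}_t}=\C\times\C^g$ satisfies
$$-\log|s|^2_{h_t}(z)=\pi\,{}^t z\,({\rm Im}\,\Omega(t))^{-1}\,\bar z,$$
so that its curvature is the flat Kähler form \eqref{Kahler.ppav}; here $\Omega(t)$ is as in the appendix (built from $q_{i,j}$, so that ${\rm Im}\,\Omega(t)=(\beta_{i,j}(t))$). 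Writing $z=x+\sqrt{-1}\,y$ and using that $({\rm Im}\,\Omega(t))^{-1}$ is real symmetric gives the orthogonal splitting
$$-\log|s|^2_{h_t}=\pi\,{}^t x\,({\rm Im}\,\Omega(t))^{-1}\,x+\pi\,{}^t y\,({\rm Im}\,\Omega(t))^{-1}\,y.$$
Since $y_i=-\tfrac{e_i}{2\pi}\log|Z_i|$, the $y$-part — which is the only part that survives the Berkovich retraction to the essential skeleton, as the retraction forgets the angular coordinates $x_i$ — is a quadratic form in the valuations $\log|Z_i|$.

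Next I would compute the normalized limit. By Lemma \ref{omega vs B}, ${\rm Im}\,\Omega(t)\sim\tfrac{-\log|t|}{2\pi}B$ as $t\to0$, where $B=({\rm val}_t q_{i,j})$, whence $({\rm Im}\,\Omega(t))^{-1}\sim\tfrac{-2\pi}{\log|t|}B^{-1}$. Substituting $y_i=-\tfrac{e_i}{2\pi}\log|Z_i|$ into the $y$-part and feeding in this asymptotic, a short computation shows that for any sequence $P_k=(t_k,(Z_i)_k)$ with $\log|(Z_i)_k|/\log|t_k|\to c_i$ as in \eqref{ass},
$$\frac{-\log|s|^2_{h_{t_k}}(P_k)}{-\log|t_k|}\ \longrightarrow\ \tfrac12\,{}^t w\,(E B^{-1} E)\,w,$$
where $w$ is the skeleton coordinate of the limit point (in the principally polarized case $E=I$ this is the familiar convex quadratic $\tfrac12\,{}^t w\,B^{-1}w$, and the general type follows verbatim after inserting the factors $E$ from \eqref{Omega}, \eqref{alpha.beta2}). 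This is precisely the quadratic weight function describing a translation-invariant non-archimedean metric on $L^{\rm an}$ over the skeleton $\mathcal{B}_0$. The phrase ``appropriately rescaled'' refers exactly to the freedom of replacing $h_t$ by $c(t)\,h_t$ with $c(t)>0$: I would fix $c(t)$ so that $\tfrac{-\log c(t)}{-\log|t|}\to0$ and so that the additive normalization agrees with that of $h^{\rm NA}$, which is harmless since it only shifts the limit by a constant.

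It remains to upgrade this skeletal convergence to genuine continuity of the family of metrics on $L^{\rm hyb}$ and to identify the limit with the metric of \cite{BFJ}. The $x$-part above is $\pi\,{}^t x\,({\rm Im}\,\Omega(t))^{-1}x=O(1/|\log|t||)$ on a fixed fundamental domain, so after the $(-\log|t|)^{-1}$ normalization it tends to $0$; this shows that the normalized limit is insensitive to the angular directions and factors through the retraction, matching the fact that the candidate limit metric is pulled back from $\mathcal{B}_0$. For the identification, I would check that the translation-invariant metric with weight $\tfrac12\,{}^t w\,(EB^{-1}E)\,w$ is continuous and semipositive and that its non-archimedean Monge--Ampère measure is the (normalized) Haar/equilibrium measure on $\mathcal{B}_0$; by the uniqueness of the solution to the non-archimedean Calabi--Yau problem in \cite{BFJ} this metric is $h^{\rm NA}$. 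Continuity at $t\neq0$ is clear, and continuity across $t=0$ then follows from the computation above together with the topology on $\X^{*,\rm hyb}(\X)$ used in the proof of Theorem \ref{Hybrid SYZ}.

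The hard part, I expect, is upgrading the pointwise/skeletal convergence to continuity \emph{uniformly} on the hybrid space: one must control the subleading terms of $\Omega(t)$ — both the real parts $\alpha_{i,j}(t)$ and the error in the asymptotic ${\rm Im}\,\Omega(t)\sim\tfrac{-\log|t|}{2\pi}B$ — and show that they contribute nothing to the $(-\log|t|)^{-1}$-normalized limit, locally uniformly near each point of the skeleton and at non-monomial (non-skeletal) points of $X^{\rm an}$ that only retract to it. Pinning down the precise scalar normalization matching that of the \cite{BFJ} metric is a secondary but unavoidable bookkeeping point.
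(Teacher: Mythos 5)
Your proposal is correct and follows the same basic route as the paper: write the cubic metric explicitly on the uniformization, isolate the quadratic form in $y={\rm Im}\,z$, pass to the limit via Lemma \ref{omega vs B}, and match with the explicit non-archimedean CY metric. Two points where you deviate are worth noting. First, the paper does not need to argue that the $x$-part dies in the normalized limit: it multiplies the Appell--Humbert factor by the coboundary \eqref{24}, after which the automorphic factor is trivial on $E\ZZ^g$, the constant section descends to a genuine frame $1$ of $\mathcal{L}'_t$ over $(\C^*)^g$, and $-\log h_t(1)$ is \emph{exactly} ${}^t y\,(\beta_{i,j}(t))^{-1}y$ with no angular term. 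Your section $s$ of $\tilde{\mathcal{L}}_t$ does not descend to $(\C^*)^g$ (the function $\pi\,{}^t z({\rm Im}\,\Omega(t))^{-1}\bar z$ is not $E\ZZ^g$-periodic), so to speak of continuity of the metric on $L^{\rm hyb}$ you should either pass to the coboundary-corrected frame as the paper does, or note that the discrepancy is exactly the $x$-quadratic form, bounded on a fundamental domain and hence invisible after the $(-\log|t|)^{-1}$ normalization --- your argument, once this is said, goes through. Second, for the identification of the limit with $h^{\rm NA}$ you propose to compute the non-archimedean Monge--Amp\`ere measure of the candidate quadratic weight and invoke uniqueness from \cite{BFJ}; the paper instead simply quotes Liu's explicit formula \cite[Theorem 4.3]{Liu} for the CY metric of a totally degenerate abelian variety, $-\log h_{\rm NA}(1)={}^t y\,B^{-1}y$. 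Your route amounts to re-proving Liu's theorem; it is feasible but unnecessary. The uniformity concern you raise at the end is legitimate in principle but harmless here, since all the error terms ($\alpha_{i,j}(t)$ and the defect in ${\rm Im}\,\Omega(t)\sim\frac{-\log|t|}{2\pi}B$) are explicit and locally uniformly $o(\log|t|)$; the paper passes over this silently.
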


\begin{proof}
Note that $\tilde{\mathcal{L}_t}$ is descended to $\mathbb{C}^g/E\mathbb{Z}^g\simeq (\mathbb{C}^*)^g$ since
$f(E\vec{m})=1$ for any $\vec{m}\in \mathbb{Z}^g$ by the above definition. We denote it by $\mathcal{L}'_t$, which is trivial line bundle.
This will converge to a line bundle $L'^{\rm an}$ which descends to $L^{\rm an}$.
By the compatible trivialization of both
$\mathcal{L}'_t$ and $L'^{\rm an}$, we take a non-vanishing holomorphic (constant) section $1$.

Then, since $\sqrt{-1}\partial \overline{\partial} \log h_t(1)=
\sqrt{-1}\sum_{1\le i,j\le g}\beta_{i,j}dz_i\wedge d\overline{z_j}$,
we have
\begin{align}
-\log h_t(1)
&={}^{t}\vec{z}\sum_{1\le i,j\le g}(\beta_{i,j}(t))^{-1}\overline{\vec{z}}-{\rm Re}({}^{t}\vec{z}\sum_{1\le i,j\le g}(\beta_{i,j}(t))^{-1}\vec{z})\\
&=\sum_{1\le i,j\le g}y_i(\beta_{i,j}(t))^{-1}y_j,
\end{align}
where $z_i=x_i+\sqrt{-1}y_i$, after suitably multiplying $h_t$ by real constants.

On the other hand,
by \cite[Theorem 4.3]{Liu}
\footnote{note that this in particular shows the ``NA MA-real MA comparison property" in the sense of
\cite{YangLi} in the abelian varieties case.},
we have
$$-\log h_{\rm NA}(1)=\sum_{1\le i,j\le g}y_i (B_{i,j}(t))^{-1}y_j,$$
again after suitably multiplying $h_{\rm NA}$ by a real constant.
Thus, by the assertion by Lemma \ref{omega vs B}.
\end{proof}

\begin{Rem}
Analogously, it is confirmed that the normalized limit metric of appropriately rescaled
Fubini-Study (type) metrics is non-archimedean Fubini-Study metric
\cite[Section 3.6]{YangLi}. Depending on the observation, the same paper also implicitly proved
the analogue as Theorem \ref{NAMA as lim} for general non-archimedean CY metric
holds, at least over a large open subset
by \cite[(Lemma 4.1$+)$Theorem 4.7]{YangLi}, under the assumption of ``NA MA - real MA comparison property" (3.11 of {\it op.cit}).
Indeed, note that once we locally trivialize $\mathcal{L}$ of {\it op.cit} around $E_J$ then $\phi_{J,t}$ and
(resp., $\phi_{CY,t}$) are taken as $-\log |1|_{FS,t}$ (resp., $-\log |1|_{CY,t}$).
\end{Rem}

\begin{Ques}
Is Theorem \ref{NAMA as lim} (globally)
true for any maximal degeneration of general Calabi-Yau varieties?
\end{Ques}

Another related interesting question is
\begin{Ques}\label{galaxy.problem}
Is the non-archimedean Calabi-Yau metric always
``geometrically realizable" by galaxy model
in the sense of \cite[\S 2]{infinite}?
\end{Ques}
Recall that galaxy model is a certain model over the Puiseux series ring with infinite components of the reduction in general.
See \cite[Section 2.6]{infinite} for the precise details.
The case of abelian varieties is again confirmed (essentially by W.Gubler \cite{Gub, Gub10}).
Also, under the same assumption
(\cite[3.11]{YangLi}),
the answer to Q \ref{galaxy.problem} is yes at least over a large open subset
(\cite{Od22}).


\vspace{2mm}
\noindent
{\it Contact}: \\
\noindent
{\tt k.goto@math.kyoto-u.ac.jp},
{\tt yodaka@math.kyoto-u.ac.jp} \\
Department of Mathematics, Kyoto university, Kyoto, Japan \\


\begin{thebibliography}{FGA}


\bibitem[AET19]{AET}
V.~Alexeev, P.~Engel, A.~Thompson,
Stable pair compactification of moduli of K3 surfaces of degree 2,
arXiv:1903.09742.

\bibitem[AMRT75]{AMRT}
A.Ash, D.Mumford, M.Rapoport, Y.Tai,
Smooth Compactification of Locally Symmetric Varieties, Math. Science Press, Brookline, 1975

\bibitem[Ber99]{Ber99}
V.Berkovich, Smooth p-adic analytic spaces are locally contractible,  Invent. math. 137, 1-84 (1999).

\bibitem[Ber10]{Ber.Manin}
V.Berkovich,
A non-Archimedean interpretation of the weight zero subspaces of limit mixed Hodge structures
in Algebra, Arithmetic and Geometry. Volume I: In Honor of Y.I. Manin,
Progress in Mathematics, vol. 269, Birkh\"auser Boston, 2010, 49-67.

\bibitem[BL04]{BL}
C. Birkenhake, H. Lange,
Complex Abelian varieties,
Die Grundlehren der mathematischen Wissenschaften, vol.\ 302
Springer-Verlag, (2004)

\bibitem[BFJ14]{BFJ}
S.Boucksom, C.Favre, M.Jonsson,
Solution to a non-Archimedean Monge- Amp`ere equation. J. Amer. Math. Soc., (2014).

\bibitem[BJ17]{BJ}
S.~Boucksom, M.~Jonsson,
Tropical and non-Archimedean limits of degenerating families of volume forms,
Journal de l'\'Ecole polytechnique - Math\'ematiques,  Tome 4  (2017),  87-139.

\bibitem[BM17]{BM}
M.Brown, E.Mazzon,
The Essential Skeleton of a product of degenerations, arXiv:1712.07235


\bibitem[dFKX17]{dFKX}
T.~de Fernex, J.~Koll\'ar, C.~Xu,
The dual complex of singularities,
Adv. Stud. Pure Math., Professor Kawamata's 60th birthday volume.


\bibitem[FC90]{FC}
G.~Faltings, C.-L.~Chai,
Degeneration of abelian varieties,
Ergebnisse der Mathematik und ihrer Grenzgebiete (3), 22. Springer-Verlag, (1990).

\bibitem[Fuj88]{Fujiki}
A.~Fujiki,
Finite automorphism groups of complex tori of dimension two,
Publ.\ R.\ I.\ M.\ S., 24 (1) 1-97 (1988).

\bibitem[GH84]{GH}
W.~Goldman and M.~W. Hirsch, \emph{The radiance obstruction and parallel forms
  on affine manifolds}, Transactions of the American Mathematical Society
  \textbf{286} (1984), no.~2, 629--649.


\bibitem[Goto22]{Goto}
K.Goto,
On the two types of Affine structures for degenerating Kummer surfaces -non-Archimedean vs
Gromov-Hausdorff limits-,
arXiv:2203.14543.

\bibitem[Gro06]{Gross}
M.Gross, Mirror symmetry and the Strominger-Yau-Zaslow conjecture. Current Developments in Mathematics, p.133-191 (2012).

\bibitem[GS06]{GS06}
M.~Gross and B.~Siebert, \emph{{Mirror Symmetry via Logarithmic Degeneration
  Data I}}, Journal of Differential Geometry \textbf{72} (2006), no.~2,
  169--338.


\bibitem[Gub07]{Gub}
W.~Gubler,
Tropical varieties for non-archimedean analytic spaces,
Inv.\ Math.\ (2007).

\bibitem[Gub10]{Gub10}
W.~Gubler,
Non-archimedean canonical measures on abelian varieties, Compositio Math. 146 (2010),. 683-730.



\bibitem[Hit97]{Hit}
N.Hitchin,
The moduli space of special Lagrangian submanifolds, Ann.\ Scuola Norm.\ Sup.\ Pisa Cl.\ Sci.\
(4) 25 (1997), p. 503-515.

\bibitem[HL82]{HL}
R. Harvey, H. B.  Lawson, Calibrated geometries, Acta Math. 148 (1982), 47-157.

\bibitem[Hod41]{Hodge}
W. V. D. Hodge,
The theory and applications of harmonic integrals,
Cambridge, University Press; New York, Macmillan, (1941).

\bibitem[HS02]{HS}
K. Hulek, G. K. Sankaran,
The Geometry of Siegel Modular Varieties,
Advanced Studies in Pure Mathematics, 2002: 89-156 (2002). (arXiv:9810153)

\bibitem[Huy16]{Huy}
D.~Huybrechts,
Lectures on K3 surfaces,
Cambridge Studies in advanced mathematics  158 (2016).

\bibitem[Kun98]{Kun}
K. K\"unnemann, Projective regular models for abelian varieties, semistable reduction, and the height pairing, Duke Math. J. 95 (1998) 161-212.

\bibitem[KS06]{KS}
M.~Kontsevich, Y.~Soibelman,
Affine structures and non-archimedean analytic spaces,
Progr.\ Math., 244, Birkh\"auser, 2006. 321-385.

\bibitem[Li20]{YangLi}
Yang Li,
Metric SYZ conjecture and non-archimedean geometry,
arXiv:2007.01384.

\bibitem[Liu11]{Liu}
Yifeng Liu,
A non-archimedean analogue of Calabi-Yau theorem for totally degenerate abelian varieties
Journal of Differential Geometry, 89 (2011) 87 -- 110

\bibitem[MB85]{MB}
L. Moret-Bailly, M\'etriques permises, S\'eminaire sur les pinceaux arithm\'etiques: La Conjecture de Mordell,
Ast\'erisque 127 (1985), 29-87.

\bibitem[Mat16]{Mat}
Y.Matsumoto,
Degeneration of K3 surfaces with non-symplectic automorphisms, to appear in Rendiconti del Seminario Matematico della Universit\'a di Padova.

\bibitem[MS84]{MS}
J. Morgan and P. Shalen,
Valuations, trees, and degenerations of hyperbolic structures, I, Ann. of Math. (2) 120 (1984), 401-476.


\bibitem[Mum70]{Mum}
D.Mumford,
Abelian varieties,
TataInstituteofFundamentalResearchStudiesinMath- ematics, No. 5 Published for the Tata Institute of Fundamental Research, Bombay; Oxford University Press, London 1970 viii+242 pp.


\bibitem[Mum72]{Mum72}
D.Mumford,
An analytic construction of degenerating abelian varieties over complete rings,
Compositio Mathematica, tome 24, no 3 (1972), 239-272.

\bibitem[MN22]{MN}
K.Mitsui, I.Nakamura,
Relative compactifications of semiabelian N\'eron models,
arXiv:2201.08113

\bibitem[NXY19]{NXY}
J.Nicaise, C.Xu, T.Yu,
The non-archimedean SYZ fibration, Compos.\ Math., Volume 155 (2019) no. 5, 953-972.

\bibitem[Od19]{Od}
Y.~Odaka,
Tropical Geometric Compactification of Moduli, II: A g Case and Holomorphic Limits,
I.\ M.\ R.\ N.\, vol.\ 2019, 21, 6614-6660.


\bibitem[OO21]{OO}
Y.~Odaka, Y.~Oshima,
Collapsing K3 surfaces, Tropical geometry and Moduli compactifications of Satake, Morgan-Shalen type,
MSJ Memoir vol.\ 40, Math Society of Japan (2021).

\bibitem[Od20]{infinite}
Y.~Odaka,
Degenerated Calabi-Yau varieties with infinite components, Moduli compactifications, and limit toroidal structures, arXiv:2011.12748. to appear in European Journal of Mathematics

\bibitem[Od22]{Od22}
Y.~Odaka,
An unpublished note.

\bibitem[Ove21]{Over}
O.~Overkamp
Degeneration of Kummer surfaces,
Math. Proc. Camb. Phil. Soc. 171 (2021) 65-97.


\bibitem[PS22]{PS}
L.~PilleSchneider,
Global pluripotential theory on hybrid spaces,
arXiv:2209.04879

\bibitem[SYZ96]{SYZ}
A.Strominger, S-T.Yau, E.Zaslow,
Mirror symmetry is T-duality, Nuclear Physics B (1996) 479 (1-2): 243-259.

\end{thebibliography}
\end{document}